\newcounter{myeqno}
\definecolor{shadecolor}{gray}{0.75}
\theoremstyle{definition}
\newtheorem{definition}{Definition}[section]
\newtheorem{proposition}{Proposition}[section]
\newtheorem{theorem}{Theorem}[section]
\newtheorem{corollary}{Corollary}[section]
\newtheorem{lemma}{Lemma}[section]
\newtheorem{remark}{Remark}[section]
\date{}
\newlength{\defbaselineskip}
\newcommand{\setlinespacing}[1]%
{\setlength{\baselineskip}{#1 \defbaselineskip}}
\begin{document}

		\title{\textbf{On the resistance regular graphs}}

\author{Haritha T$^1$\footnote{harithathottungal96@gmail.com},  Chithra A. V$^1$\footnote{chithra@nitc.ac.in}
	\\ \\ \small 
 1 Department of Mathematics, National Institute of Technology Calicut,\\\small Calicut-673 601, Kerala, India\\ \small}

\maketitle	
\begin{abstract}
For a connected graph $G$, its resistance distance matrix is denoted by $R(G)$. A graph is called resistance regular if all the row (or column) sums of $R(G)$ are equal. We provide a necessary and sufficient condition for a simple connected graph to be resistance regular. Additionally, we establish sharp bounds for the resistance spectral radius and present various bounds for the resistance energy of $G$. Furthermore, we compute the resistance spectrum and resistance energy of some resistance regular graphs.
\end{abstract}
{Keywords: Resistance distance, Resistance distance matrix, Laplacian matrix, Moore-Penrose inverse, Resistance regular graphs, Resistance energy.}

\section{Introduction}
All graphs considered in this paper are simple, connected, and undirected. Let $G$ be a graph with vertices $v_1,\ldots, v_n$ and edges $e_1,\ldots, e_m.$  The \textit{complete graph} on $n$ vertices, \textit{cycle}  having $n$ vertices, and \textit{complete multipartite graph} on $q$ parts ($2\leq q \leq n-1$) are denoted by $K_n$, $C_n$, and $K_{n_1, \ldots, n_q}$ respectively, and when $n_1=\cdots=n_q=2, \; K_{2,\ldots, 2}\;(n_1+\cdots+n_q=n)$ is said to be the cocktail party graph ($CP(n)$). A \textit{walk-regular graph} is a graph for which the number of closed walks of length $t$, where $t\geq 2$, at a vertex $v$ is independent of the choice of $v.$ Denote the identity matrix by $I$ and the all-one square matrix by $J$, both of appropriate orders. The adjacency matrix $A(G)$ of a graph $G$ is a square symmetric matrix whose entries equal to one if the corresponding vertices are adjacent and zero otherwise. The set of all neighbors of a vertex $v_i\in V(G)$ is called its \textit{neighborhood} $N(v_i)$ in $G.$ Let the degree of a vertex $v_i$ in a graph $G$ be denoted by $\mathrm{deg}_G(v_i)$. The matrix $D(G)$ is a diagonal matrix whose diagonal entries are the vertex degrees, with the $i^{\text{th}}$ diagonal entry equal to $\mathrm{deg}_G(v_i)$. The \textit{Laplacian matrix} of $G$ is defined by $L(G)=L=(l_{ij})_{n\times n}= D(G)-A(G).$ Denote the eigenvalues of $L(G)$ by $\gamma_1, \ldots, \gamma_{n-1}, \gamma_n=0$, arranged in non-increasing order. For $1\leq i\leq n$, let $S_i=(s_{i1},s_{i2},\ldots, s_{in})^T$ be the real-valued, normalized, and mutually orthogonal eigenvectors corresponding to the eigenvalues $\gamma_i$ of $L(G).$ If $S=(s_{ij})_{n\times n}$, then $SS^T=S^TS=I.$
 The \textit{resistance distance} \cite{klein1993resistance}, a novel distance function, was introduced by Klein et al. in $1993$ as a new concept of distance in graphs.
For an $m\times n$ matrix $P$, the \textit{Moore-Penrose inverse} $P^\dagger$ \cite{ben2003generalized} of $P$ is the unique matrix 
$W$ such that $PWP= P$, $WPW= W$, $(PW)^T= PW$ and $(WP)^T= WP$.
     For an $m\times n$ matrix $P$, the matrix $M$ of order $n\times m$ is said to be a \textit{$\{ 1\}$-inverse}, $P^{(1)}$  of $P$  \cite{ben2003generalized} if $PMP = P$.

The Moore-Penrose inverse and a $\{1\}$-inverse of $L$ of the underlying graph $G$ are used to calculate the resistance distance $r_{ij}$ (or $r_{ij}(G)$) \cite{bapat2010graphs} between two vertices $v_i$ and $v_j$. The relation is as follows:

 \begin{equation*}
     \begin{aligned}
         r_{ij} &= l^{(1)}_{ii}+ l^{(1)}_{jj}-l^{(1)}_{ij}-l^{(1)}_{ji}=l_{ii}^{\dagger}+l_{jj}^{\dagger}-2l_{ij}^{\dagger}.
     \end{aligned}
 \end{equation*}

\noindent The matrix $R(G)=R= (r_{ij})_{n\times n}$ is called the resistance distance matrix of $G$. The eigenvalues
 of $R(G)$ are said to be the resistance eigenvalues ($R$-eigenvalues) of $G$. Let
 $\rho_1(G),\rho_2(G),\ldots,\rho_t(G)$ be the distinct $R$-eigenvalues of $G$ with multiplicities $s_1,s_2,\ldots,s_t$. Then the spectrum of $R(G)$ is denoted by $Spec_{R}(G)= \begin{pmatrix}
        \rho_1(G)&\rho_2(G)&\ldots&\rho_t(G)\\
        s_1&s_2&\ldots &s_t
    \end{pmatrix}$, and indexed such that $\rho_1(G)\geq \cdots \geq \rho_t(G)$.\\ The resistance spectral
 radius ($\rho_1(G)$) is defined as the maximum of absolute values of $R$-eigenvalues. \\
 The resistance energy ($R$-energy), $E_R(G)$ \cite{MR2951686} of $G$ is defined as $$E_R(G)= \sum_{i=1}^{n}|\rho_i(G)|.$$\\
 \noindent The \textit{Kirchhoff index} $\mathcal{K}f(G)$ of $G$, also known as the total resistance of a network is defined as,
$$\mathcal{K}f(G)= \sum_{i<j} r_{ij}.$$
Let the row sums of the resistance distance matrix be the resistance degrees of $G$, denoted as, $R_i$ for $i\in \{1,\ldots, n\}.$ A graph $G$ is said to be $k$-resistance regular if $R_i=k$ for all $i\in \{1,\ldots, n\}.$ The following example gives a $4$-resistance regular graph on $9$ vertices.
\begin{figure}[htbp]
\centering
 \begin{tikzpicture}[scale=0.6,inner sep=1.3pt]
\draw (0,2) node(8) [circle,draw,fill] {};
     \draw (2,0) node(4) [circle,draw,fill] {};
       \draw (-2,0) node(3) [circle,draw,fill] {};     \draw (2,2) node(9) [circle,draw,fill] {};
       \draw (-2,2) node(7) [circle,draw,fill] {};
       \draw (-4,2) node(2) [circle,draw,fill] {};
       \draw (4,2) node(5) [circle,draw,fill] {};
       \draw (-2,4) node(1) [circle,draw,fill] {};
       \draw (2,4) node(6) [circle,draw,fill] {};
 \draw [-] (6) to (1) to (2) to (3) to (4) to (5) to (6) to (8) to (1) to (9) to (5);
 \draw[-] (2) to (7) to (6) to (7) to (3) to (8) to (4) to (9);
 \draw [-,bend right] (7) to (5);
 \draw [-,bend right] (2) to (9);

\end{tikzpicture}
\caption{$4$-resistance regular graph on $9$ vertices.}
 \end{figure}
Denote the second resistance degree as $T_i= \sum_{j=1}^{n}r_{ij}R_j$, and the average resistance degree as $\overline{R}_i= \frac{T_i}{R_i}.$  A graph $G$ is said to be pseudo $k$-resistance regular if $\frac{T_i} {R_i}= k$ for all $i\in \{1,\ldots, n\}.$
\par The importance of resistance distance in graphs extends to combinatorial matrix theory as well as spectral graph theory \cite{bapat2010graphs,chen2007resistance,bapat2003simple}. See \cite{evans2022algorithmic} for an overview of techniques for determining resistance distance in graphs. In \cite{sarma2023study,zhou2016resistance}, the authors studied resistance distance matrix and resistance regular graphs. \\
This paper is organized as follows: In Section $2$, we present some known results. Section $3$ provides a necessary and sufficient condition for a graph to be resistance regular. Generally, calculating the resistance energy of graphs is challenging. Section $4$ determines the resistance spectrum and energy of the graphs formed using unary and binary operations and presents several bounds for the resistance energy. In addition, we obtain some sharp bounds for the resistance spectral radius of $G$ and derive an upper bound for the Kirchhoff index.
\section{Preliminaries}

This section presents some known results that will be used in the subsequent sections.
\begin{lemma}\cite{ressub}\label{L}
Let $G$ be a connected graph and let $L(G)= \begin{bmatrix}L_1& L_2\\ L_{2}^{T}& L_3\end{bmatrix}$. If each column vector of $L_{2}^{T}$ is $-J$ or a zero vector, then a $\{1\}$-inverse of $L(G)$ is $L^{(1)}= \begin{bmatrix}L_{1}^{-1}& 0\\ 0& S^{\dagger}\end{bmatrix}$, where $S= L_{3}-L_{2}^{T}L_{1}^{-1}L_{2}.$
\end{lemma}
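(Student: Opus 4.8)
The plan is to verify directly that the proposed matrix satisfies the single defining relation of a $\{1\}$-inverse, namely $L\,L^{(1)}\,L = L$; the other three Moore--Penrose equations are not required. Writing $L^{(1)}=\begin{bmatrix}L_1^{-1}&0\\0&S^\dagger\end{bmatrix}$ and performing the two block multiplications, and using $L_1L_1^{-1}=I$ together with $L_2^{T}L_1^{-1}L_2 = L_3-S$, the identity $L\,L^{(1)}\,L=L$ is seen to be equivalent to the four block equations
\[
L_2 S^\dagger L_2^{T}=0,\qquad L_2 S^\dagger L_3=0,\qquad L_3 S^\dagger L_2^{T}=0,\qquad L_3 S^\dagger L_3 = S .
\]
So the whole argument reduces to establishing these four identities.

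Next I would reformulate the hypothesis and record the one structural fact that drives everything. Let $\mathbf 1$ denote the all-ones column vector of the order of $L_3$. Saying that each column of $L_2^{T}$ is $-\mathbf 1$ or $\mathbf 0$ is exactly saying $L_2^{T}=-\mathbf 1\,\chi^{T}$ for some $0/1$ vector $\chi$; thus $L_2^{T}$ has rank at most one. The second ingredient is that $\mathbf 1\in\ker S$. This comes not from the special structure but from $L\mathbf 1_n=0$: partitioning the global all-ones vector conformally as $(e^{T},\mathbf 1^{T})^{T}$ gives $L_1 e+L_2\mathbf 1=0$ and $L_2^{T}e+L_3\mathbf 1=0$, whence $L_2^{T}L_1^{-1}L_2\mathbf 1=-L_2^{T}e=L_3\mathbf 1$ and therefore $S\mathbf 1=L_3\mathbf 1-L_3\mathbf 1=0$. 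Since $S$ is symmetric, $\ker S=\ker S^\dagger$, so $S^\dagger\mathbf 1=0$ and, by symmetry of $S^\dagger$, also $\mathbf 1^{T}S^\dagger=0$.

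With these two facts the first three block equations are immediate, since $S^\dagger L_2^{T}=-(S^\dagger\mathbf 1)\chi^{T}=0$ and $L_2 S^\dagger=-\chi(\mathbf 1^{T}S^\dagger)=0$ make $L_2 S^\dagger L_2^{T}$, $L_3 S^\dagger L_2^{T}$ and $L_2 S^\dagger L_3$ vanish at once. The substantive step, which I expect to be the main obstacle, is the fourth identity $L_3 S^\dagger L_3 = S$. Here I would use the rank-one structure a second time: $L_2^{T}L_1^{-1}L_2=(-\mathbf 1\chi^{T})L_1^{-1}(-\chi\mathbf 1^{T})=\alpha\,\mathbf 1\mathbf 1^{T}$ with scalar $\alpha=\chi^{T}L_1^{-1}\chi$, so that $L_3=S+\alpha\,\mathbf 1\mathbf 1^{T}$. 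Expanding $L_3 S^\dagger L_3=(S+\alpha\mathbf 1\mathbf 1^{T})S^\dagger(S+\alpha\mathbf 1\mathbf 1^{T})$ gives four terms: $S S^\dagger S=S$ by the Moore--Penrose relation, while each of the remaining three terms carries a factor $S^\dagger\mathbf 1$ or $\mathbf 1^{T}S^\dagger$, both of which vanish because $\mathbf 1\in\ker S=\ker S^\dagger$. Hence $L_3 S^\dagger L_3=S$.

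Finally I would flag the standing assumption that $L_1$ is invertible, which is needed for $L_1^{-1}$ and $S$ to be defined; this is ensured by the connectivity of $G$, as every principal submatrix of a connected graph's Laplacian obtained by deleting at least one vertex (the second block being nonempty) is positive definite. Combining the four verified block identities yields $L\,L^{(1)}\,L=L$, so $L^{(1)}$ is a $\{1\}$-inverse of $L(G)$, as claimed.
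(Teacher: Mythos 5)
Your proof is correct and complete: the reduction of $L\,L^{(1)}\,L=L$ to the four block identities is accurate, the rank-one factorization $L_2^{T}=-\mathbf 1\chi^{T}$ together with $S\mathbf 1=0$ (hence $S^{\dagger}\mathbf 1=0$ by symmetry of $S$) disposes of all four, and the invertibility of $L_1$ as a proper principal submatrix of a connected graph's Laplacian is correctly flagged. The paper states this lemma as a cited preliminary from \cite{ressub} and gives no proof of its own, so there is nothing to compare against; your argument stands as a valid self-contained verification.
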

\begin{lemma}{\cite{1inv}}\label{pinv}
   Let $\Gamma$ be a connected graph and let $L(\Gamma)= \begin{bmatrix}
        L_1& L_2\\L_2^T& L_3
    \end{bmatrix}$.
If $L_3$ is non-singular, then a $\{1\}$-inverse of $L(\Gamma)$ is
$$L^{(1)}= \begin{bmatrix}
    M^{\dagger}& -M^{\dagger}L_2L_3^{-1}\\
    -L_3^{-1}L_2^TM^{\dagger}& L_3^{-1}+L_3^{-1}L_2^TM^{\dagger}L_2L_3^{-1}
\end{bmatrix},$$
where $M= L_1-L_2L_3^{-1}L_2^T.$

\end{lemma}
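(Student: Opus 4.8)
The goal is to verify that the proposed block matrix $L^{(1)}$ satisfies the defining relation of a $\{1\}$-inverse, namely $L L^{(1)} L = L$; none of the other Penrose conditions needs to be checked. The whole computation is driven by recognizing $M = L_1 - L_2 L_3^{-1} L_2^T$ as the Schur complement of $L_3$ in $L$, so that products of the off-diagonal blocks with the inner factors collapse into $M$ or $M M^\dagger$. The single nontrivial property of the Moore-Penrose inverse I will invoke is the first Penrose identity $M M^\dagger M = M$; everything else is exact block arithmetic relying on the invertibility of $L_3$.

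First I would compute the product $L L^{(1)}$ by multiplying the two $2\times 2$ block matrices blockwise. Writing $L^{(1)} = \begin{bmatrix} A & B \\ C & D \end{bmatrix}$ with $A = M^\dagger$, $B = -M^\dagger L_2 L_3^{-1}$, $C = -L_3^{-1} L_2^T M^\dagger$, and $D = L_3^{-1} + L_3^{-1} L_2^T M^\dagger L_2 L_3^{-1}$, the four blocks simplify as follows: the $(1,1)$ block becomes $L_1 M^\dagger - L_2 L_3^{-1} L_2^T M^\dagger = (L_1 - L_2 L_3^{-1} L_2^T) M^\dagger = M M^\dagger$; the $(2,1)$ block becomes $L_2^T M^\dagger - L_2^T M^\dagger = 0$; the $(2,2)$ block becomes $I$ after the two copies of $L_2^T M^\dagger L_2 L_3^{-1}$ cancel; and the $(1,2)$ block reduces to $-(L_1 - L_2 L_3^{-1} L_2^T) M^\dagger L_2 L_3^{-1} + L_2 L_3^{-1} = (I - M M^\dagger) L_2 L_3^{-1}$. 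Thus
$$L L^{(1)} = \begin{bmatrix} M M^\dagger & (I - M M^\dagger) L_2 L_3^{-1} \\ 0 & I \end{bmatrix}.$$

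Next I would right-multiply this by $L$. Because the bottom block-row of $L L^{(1)}$ is $\begin{bmatrix} 0 & I \end{bmatrix}$, the $(2,1)$ and $(2,2)$ blocks of the product immediately return $L_2^T$ and $L_3$. For the top block-row, the $(1,1)$ block is $M M^\dagger L_1 + (I - M M^\dagger) L_2 L_3^{-1} L_2^T = M M^\dagger (L_1 - L_2 L_3^{-1} L_2^T) + L_2 L_3^{-1} L_2^T = M M^\dagger M + L_2 L_3^{-1} L_2^T$, and here I would apply $M M^\dagger M = M$ to obtain $M + L_2 L_3^{-1} L_2^T = L_1$; similarly the $(1,2)$ block is $M M^\dagger L_2 + (I - M M^\dagger) L_2 = L_2$. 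Hence $L L^{(1)} L = L$, as required.

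The computation is entirely routine once the block products are expanded; the only real obstacle is bookkeeping, namely keeping the several $L_3^{-1}$ and $M^\dagger$ factors in their correct positions and spotting the telescoping cancellations, together with the recognition that the first Penrose condition $M M^\dagger M = M$ is exactly (and only) what is needed to close the argument. Note that the identity is purely algebraic: symmetry of $L$ (entering through the $L_2^T$ block) and positive semidefiniteness play no role in the verification itself and matter only insofar as they guarantee that $L$ is a genuine graph Laplacian.
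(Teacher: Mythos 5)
Your verification is correct: the blockwise computation of $LL^{(1)}$ and then $LL^{(1)}L$ checks out, and you are right that only the single identity $MM^{\dagger}M = M$ is needed, since a $\{1\}$-inverse is defined solely by $LL^{(1)}L = L$. The paper itself states this lemma as a cited preliminary without reproducing a proof, so there is nothing to compare against; your argument is the standard Schur-complement verification and is complete.
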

\begin{lemma}\label{rescycle}\cite{flower}
    Let $C_n$ be a cycle of order $n.$ Then the resistance distance between any vertices $v_i, v_j\in V(C_n)$ is given by the formula:
    $$r_{ij}= \frac{(n-d_{ij})d_{ij}}{n}.$$
\end{lemma}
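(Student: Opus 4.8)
The plan is to exploit the electrical-network interpretation of resistance distance: assign a unit resistance to every edge of $C_n$ and compute the effective resistance between $v_i$ and $v_j$ directly by series--parallel reduction, rather than wrestling with the Moore--Penrose inverse of $L(C_n)$.

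First I would observe that, fixing two vertices $v_i, v_j$ at graph distance $d_{ij}$, the cycle decomposes into exactly two internally vertex-disjoint paths (arcs) joining $v_i$ to $v_j$: the shorter arc of length $d_{ij}$ that realizes the distance, and the complementary arc of length $n-d_{ij}$. Since the unit resistors along each arc are connected in series, the two arcs present resistances $d_{ij}$ and $n-d_{ij}$ respectively. These are the only two paths between the endpoints and they meet only at $v_i$ and $v_j$, so they combine in parallel. The parallel-resistance rule then yields
$$r_{ij} = \left(\frac{1}{d_{ij}} + \frac{1}{\,n-d_{ij}\,}\right)^{-1} = \frac{d_{ij}(n-d_{ij})}{n},$$
which is exactly the claimed formula.

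The only point that requires genuine justification is that the resistance distance $r_{ij}=l^{\dagger}_{ii}+l^{\dagger}_{jj}-2l^{\dagger}_{ij}$ defined from the Laplacian coincides with the physical effective resistance, and that series--parallel reduction is a legitimate way to evaluate it; both facts are standard (Klein--Randi\'c), and since $C_n$ is trivially series--parallel, no real difficulty arises here. If one wished to avoid any appeal to the electrical analogy, an alternative is to compute the needed entries of $L(C_n)^{\dagger}$ from the explicit spectral decomposition of the cycle's Laplacian (eigenvalues $2-2\cos(2\pi k/n)$ with Fourier eigenvectors) and reduce the identity to evaluating a trigonometric sum of the form $\sum_k (1-\cos(2\pi k d_{ij}/n))/(1-\cos(2\pi k/n))$. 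I expect this purely algebraic route to be the laborious part, so I would present the circuit-reduction argument as the main proof and relegate the spectral verification, if desired, to a remark.
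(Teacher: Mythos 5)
Your argument is correct: the two arcs between $v_i$ and $v_j$ have internal vertices of degree $2$, so series reduction gives resistances $d_{ij}$ and $n-d_{ij}$, and since the arcs meet only at the two terminals the parallel rule yields $r_{ij}=\bigl(1/d_{ij}+1/(n-d_{ij})\bigr)^{-1}=d_{ij}(n-d_{ij})/n$, which is the canonical proof of this classical fact. Note that the paper itself offers no proof --- it imports the lemma from the cited reference as a preliminary --- so there is nothing to compare against; your circuit-reduction route (with the Klein--Randi\'c identification of resistance distance and effective resistance taken as known) is exactly what one would expect, and the spectral computation via the Fourier eigenvectors of $L(C_n)$ is indeed only worth relegating to a remark.
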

\begin{definition}\cite{indulal2006pair}
    Let $G$ be a graph with $V(G) = \{v_1, v_2,\ldots ,v_n\}$. Make another copy of $G$ with vertices $\{v_{1}', v_{2}',\ldots ,v_{n}'\}$ in which $v_{i}'$ corresponds to $v_i$ for each $i$ in such a way that for each $i, \; v_{i}'$ is adjacent to every vertices in $N(v_i)$. The resultant graph is called the \index{double graph}\textit{double graph} $D_2G$ of $G$.

\end{definition}

\begin{theorem}\cite{huang2016resistance}\label{resdouble}
    Let $G$ be a connected graph with degree sequence $(\mathrm{deg}_G(u_1), \ldots, \mathrm{deg}_G(u_n))$. And let $\{u_1,\ldots,u_n,u_{1}',\ldots, u_{n}'\}$ be the vertex set of the double graph $D_2G$ of $G.$ Then the resistance distance between two vertices $v_i, v_j$ in $V(D_2G)$ is
   $$r_{ij}(D_2G)=\begin{cases}\frac{1}{\mathrm{deg}_{G}(u_i)}, &\text{if $v_i=u_i$ and $v_j=u_{i}'$,}\\
     0, &\text{if $v_i= v_j$,}\\\frac{1}{4}\left(r_{ij}(G)+\frac{1}{\mathrm{deg}_{G}(u_i)}+\frac{1}{\mathrm{deg}_{G}(u_j)}\right), &\text{otherwise.}\end{cases}$$
\end{theorem}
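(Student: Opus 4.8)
The plan is to work directly with the Laplacian of $D_2G$ and extract the resistance distances from the formula $r_{ij}=l_{ii}^\dagger+l_{jj}^\dagger-2l_{ij}^\dagger$. Writing $A=A(G)$ and $D=D(G)$ and ordering the vertices as $u_1,\dots,u_n,u_1',\dots,u_n'$, the double-graph construction has adjacency matrix $\left[\begin{smallmatrix}A&A\\A&A\end{smallmatrix}\right]$, so each $u_i$ has degree $2\deg_G(u_i)$ and
$$L(D_2G)=\begin{bmatrix}2D-A&-A\\-A&2D-A\end{bmatrix}.$$
The first thing I would exploit is the symmetry interchanging the two copies. (One could instead feed the block form into the $\{1\}$-inverse formulas of Lemma~\ref{L} or Lemma~\ref{pinv}, but the Moore--Penrose route below is cleaner because it yields a symmetric inverse.)

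Second, I would diagonalize this block structure using the orthogonal, symmetric involution $U=\tfrac{1}{\sqrt2}\left[\begin{smallmatrix}I&I\\I&-I\end{smallmatrix}\right]$. Evaluating $L(D_2G)$ on symmetric vectors $(x,x)^T$ and antisymmetric vectors $(x,-x)^T$ gives $L(D_2G)(x,x)^T=(2Lx,2Lx)^T$ and $L(D_2G)(x,-x)^T=(2Dx,-2Dx)^T$, i.e. $U\,L(D_2G)\,U=\operatorname{diag}(2L,2D)$. Thus the symmetric subspace carries a copy of $2L(G)$ and the antisymmetric subspace a copy of $2D$; here $D$ is invertible because $G$ is connected with $n\ge 2$, so there are no isolated vertices.

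Third, I would push the Moore--Penrose inverse through this orthogonal conjugation. Since $U$ is orthogonal and the middle matrix is block diagonal, $\big(U\operatorname{diag}(2L,2D)U\big)^\dagger=U\operatorname{diag}\!\big(\tfrac12 L^\dagger,\tfrac12 D^{-1}\big)U$, using $(2L)^\dagger=\tfrac12 L^\dagger$ and $(2D)^{-1}=\tfrac12 D^{-1}$. Multiplying out the blocks yields
$$L(D_2G)^\dagger=\frac14\begin{bmatrix}L^\dagger+D^{-1}&L^\dagger-D^{-1}\\L^\dagger-D^{-1}&L^\dagger+D^{-1}\end{bmatrix}.$$
I regard justifying this step as the main technical point: one must check that the Moore--Penrose inverse genuinely commutes with the orthogonal change of basis and respects the block-diagonal split, and in particular that the kernel of $L(D_2G)$, spanned by $(\mathbf 1_n,\mathbf 1_n)$, lies inside the symmetric block and is annihilated by $L^\dagger$. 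Everything afterwards is bookkeeping.

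Finally, I would substitute the block entries into $r_{ij}=l_{ii}^\dagger+l_{jj}^\dagger-2l_{ij}^\dagger$ and split into cases. When $v_i=u_i$ and $v_j=u_i'$ the two diagonal entries each equal $\tfrac14\big(L^\dagger_{ii}+1/\deg_G(u_i)\big)$ while the cross entry equals $\tfrac14\big(L^\dagger_{ii}-1/\deg_G(u_i)\big)$, so the $L^\dagger$ parts cancel and $r_{ij}=1/\deg_G(u_i)$. The case $v_i=v_j$ is immediate. In every remaining case the two indices involved are distinct, so the off-diagonal entries of the diagonal matrix $D^{-1}$ vanish and the relevant cross entry is simply $\tfrac14 L^\dagger_{ij}$, regardless of whether the two vertices lie in the same copy or in different copies. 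Collecting terms reconstructs $L^\dagger_{ii}+L^\dagger_{jj}-2L^\dagger_{ij}=r_{ij}(G)$ together with the surviving diagonal pieces $\tfrac14\big(1/\deg_G(u_i)+1/\deg_G(u_j)\big)$, giving $r_{ij}(D_2G)=\tfrac14\big(r_{ij}(G)+1/\deg_G(u_i)+1/\deg_G(u_j)\big)$, as claimed.
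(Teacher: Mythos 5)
Your argument is correct, but there is nothing in the paper to compare it against: Theorem~\ref{resdouble} is imported verbatim from \cite{huang2016resistance} and stated without proof among the preliminaries, so you have supplied a proof where the paper gives none. Your route is sound on its own terms. The Laplacian $L(D_2G)=\left[\begin{smallmatrix}2D-A&-A\\-A&2D-A\end{smallmatrix}\right]$ is right (each vertex of $D_2G$ has degree $2\deg_G(u_i)$, and $u_i\nsim u_i'$), the conjugation by $U=\tfrac{1}{\sqrt2}\left[\begin{smallmatrix}I&I\\I&-I\end{smallmatrix}\right]$ does give $\operatorname{diag}(2L,2D)$, and the step you flag as the technical crux is unproblematic: $(UMU^{T})^{\dagger}=UM^{\dagger}U^{T}$ for orthogonal $U$ and the Moore--Penrose inverse of a block-diagonal matrix is block-diagonal, both verified directly from the four Penrose conditions; the kernel $(\mathbf 1,\mathbf 1)^{T}$ indeed lands in the symmetric block, and $D$ is invertible since $G$ is connected. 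The resulting $L(D_2G)^{\dagger}=\tfrac14\left[\begin{smallmatrix}L^{\dagger}+D^{-1}&L^{\dagger}-D^{-1}\\L^{\dagger}-D^{-1}&L^{\dagger}+D^{-1}\end{smallmatrix}\right]$ then yields all three cases correctly from $r_{ij}=l_{ii}^{\dagger}+l_{jj}^{\dagger}-2l_{ij}^{\dagger}$ (a sanity check on $D_2K_2\cong C_4$ confirms the values). It is worth noting that your symmetric/antisymmetric block diagonalization is exactly the device the paper itself uses later, via Lemma~\ref{BS}, when it decomposes $R(D_2G)$ into $B_0\pm B_1$ to compute $Spec_R(D_2G)$ for resistance regular $G$ --- you have applied the same idea one level down, at the Laplacian rather than at the resistance matrix, which is what makes the derivation work without any regularity hypothesis on $G$.
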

\begin{definition}\cite{MR2977757}
   For $i\in \{1,2\}$, let $G_i$ be a graph with vertex set $V(G_i)$ and edge set $E(G_i).$ Then the \index{lexicographic product}\textit{lexicographic product} $G_1[G_2]$ of $G_1$ and $G_2$ is the graph with $V(G_1[G_2])= V(G_1)\times V(G_2)$ in which two vertices ($v_1, u_1$) and ($v_2, u_2$) are adjacent if $v_1v_2\in  E(G_1)$ or $v_1 = v_2$ and
$u_1u_2\in E(G_1)$.  
\end{definition}
\begin{proposition}\cite{yang2014resistance}\label{reslexico}
    Let $G$ be a connected graph with degree sequence $(\mathrm{deg}_G(v_1), \ldots, \mathrm{deg}_G(v_n))$. And let $\{u_1,u_2\}$ be the vertex set of $K_2.$ Then the resistance distance between two vertices $w_i=(v_i, u_i), w_j= (v_j, u_j)$ in the lexicographic product $G[K_2]$ is
    $$r_{ij}(G[K_2])=\begin{cases}\frac{1}{\mathrm{deg}_{G}(v_i)+1}, &\text{if $v_i=v_j$ and $u_i\neq u_j$,}\\
     \frac{r_{ij}(G)}{4}+\frac{1}{4\mathrm{deg}_G(v_i)+4}+\frac{1}{4\mathrm{deg}_G(v_j)+4}, &\text{otherwise.}\end{cases}$$
\end{proposition}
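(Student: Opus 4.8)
The plan is to use the electrical network interpretation of resistance distance: view $G[K_2]$ as a network carrying a unit resistor on each edge, so that $r_{ij}(G[K_2])$ is the effective resistance between the two prescribed vertices, obtained by solving Kirchhoff's current law for the nodal potentials and reading off the potential drop per unit of injected current. The structural feature I would exploit is that each vertex $v$ of $G$ is ``blown up'' into the pair $(v,u_1),(v,u_2)$, which are joined to one another and share all of their external neighbours; consequently $G[K_2]$ carries the involutive automorphism $\sigma$ that swaps $u_1\leftrightarrow u_2$, and every node has degree $\mathrm{deg}_{G[K_2]}(v,u_\ell)=2\,\mathrm{deg}_G(v)+1$. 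All of the case analysis will be driven by this symmetry.

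For the first case ($v_i=v_j$, $u_i\neq u_j$) I would inject a unit current at $(v_i,u_1)$ and extract it at $(v_i,u_2)$. Since $\sigma$ interchanges the source and the sink, the resulting potential $\phi$ is antisymmetric, $\phi\circ\sigma=-\phi$, so writing $\phi(v,u_1)=a_v=-\phi(v,u_2)$ reduces the unknowns to one per vertex of $G$. Applying Kirchhoff's law at an interior node $(v,u_1)$ with $v\neq v_i$ and summing over its $2\,\mathrm{deg}_G(v)+1$ neighbours telescopes to $2\bigl(1+\mathrm{deg}_G(v)\bigr)a_v=0$, forcing $a_v=0$ for every $v\neq v_i$. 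The balance equation at the source then reads $2\bigl(1+\mathrm{deg}_G(v_i)\bigr)a_{v_i}=1$, and the drop $\phi(v_i,u_1)-\phi(v_i,u_2)=2a_{v_i}$ yields $r=1/(\mathrm{deg}_G(v_i)+1)$ immediately.

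For the ``otherwise'' case ($v_i\neq v_j$) I would inject and extract unit current at the chosen copies and decompose each potential into its symmetric and antisymmetric parts, $\psi_v=\phi(v,u_1)+\phi(v,u_2)$ and $\delta_v=\phi(v,u_1)-\phi(v,u_2)$. Subtracting the two Kirchhoff equations at each vertex isolates $\delta$ and shows it is supported only at $v_i,v_j$, with $\delta_{v_i}=\pm\tfrac{1}{2(1+\mathrm{deg}_G(v_i))}$ and $\delta_{v_j}=\pm\tfrac{1}{2(1+\mathrm{deg}_G(v_j))}$, while adding the two equations makes the antisymmetric terms cancel and produces exactly $L(G)\psi=\tfrac12\bigl(e_{v_i}-e_{v_j}\bigr)$. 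Hence $\psi_{v_i}-\psi_{v_j}=\tfrac12\,r_{ij}(G)$, and reconstructing the required drop from $\phi(v,u_1)=\tfrac12(\psi_v+\delta_v)$, $\phi(v,u_2)=\tfrac12(\psi_v-\delta_v)$ assembles the three terms $\tfrac14 r_{ij}(G)+\tfrac{1}{4\,\mathrm{deg}_G(v_i)+4}+\tfrac{1}{4\,\mathrm{deg}_G(v_j)+4}$.

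The step I expect to be the main obstacle is this reduction in the ``otherwise'' case: one must recognise that summing the paired Kirchhoff equations collapses the whole network of $G[K_2]$ back onto the Laplacian system of $G$ itself, with the injected current halved, since that is precisely what converts the computation into the term $\tfrac14 r_{ij}(G)$. A secondary point to verify is that the formula is independent of which copies $u_i,u_j$ are selected; this follows because moving the sink from $(v_j,u_1)$ to $(v_j,u_2)$ flips the sign of $\delta_{v_j}$ but leaves $\psi$ unchanged, and the two sign choices recombine to the same value, so all of the non-diagonal subcases collapse to the single stated expression.
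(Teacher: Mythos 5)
Your argument is correct. Note first that the paper itself offers no proof of this proposition: it is imported verbatim from \cite{yang2014resistance}, so there is nothing internal to compare against. Your derivation is therefore a genuinely independent route, and it checks out line by line: in $G[K_2]$ each node $(v,u_\ell)$ has degree $2\,\mathrm{deg}_G(v)+1$; in the first case the antisymmetry $\phi\circ\sigma=-\phi$ kills $a_w$ at every $w\neq v_i$ because the contributions $\mp a_x$ from the two copies of each neighbour cancel, leaving $2(1+\mathrm{deg}_G(w))a_w=0$, and the source equation gives the drop $2a_{v_i}=1/(\mathrm{deg}_G(v_i)+1)$; in the second case adding the paired Kirchhoff equations does yield $2(L(G)\psi)_w=I_1(w)+I_2(w)$, hence $L(G)\psi=\tfrac12(e_{v_i}-e_{v_j})$ and $\psi_{v_i}-\psi_{v_j}=\tfrac12 r_{ij}(G)$, while subtracting localizes $\delta$ to $\{v_i,v_j\}$ with the stated magnitudes, and the sign ambiguity in $\delta_{v_j}$ indeed cancels when the drop is reassembled, confirming independence of the choice of copies. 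The standard published route to such formulas (and the style this paper uses elsewhere, e.g.\ Lemmas \ref{L} and \ref{pinv}) is algebraic: write $L(G[K_2])$ in block form, compute a $\{1\}$-inverse or Moore--Penrose inverse via a Schur complement, and read off $r_{ij}=l^{(1)}_{ii}+l^{(1)}_{jj}-l^{(1)}_{ij}-l^{(1)}_{ji}$; your potential-theoretic symmetry decomposition is more elementary and arguably more transparent, at the cost of not producing the full generalized inverse as a byproduct. One cosmetic caveat, attributable to the statement rather than to you: the ``otherwise'' clause as printed technically covers $w_i=w_j$, where the formula does not return $0$; your proof correctly treats the second case as $v_i\neq v_j$, which is the intended reading.
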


\begin{lemma}\cite{gutman2004generalized}\label{lldagger}
    Let $G$ be a connected graph with $n$ vertices and $L$ be its Laplacian matrix, then $LL^{\dagger}=L^{\dagger}L=I-\frac{1}{n}J.$
\end{lemma}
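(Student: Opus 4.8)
The plan is to diagonalise $L$ and read off both products directly from its spectral decomposition, using the paper's orthonormal eigenvectors $S_1,\ldots,S_n$. Since $L$ is real symmetric and the $S_i$ are orthonormal ($S_i^TS_j=\delta_{ij}$) with the completeness relation $\sum_{i=1}^n S_iS_i^T=I$ (equivalently $SS^T=S^TS=I$), I would start from the spectral decomposition $L=\sum_{i=1}^{n}\gamma_i\,S_iS_i^T=\sum_{i=1}^{n-1}\gamma_i\,S_iS_i^T$, where the last equality uses $\gamma_n=0$.

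Next I would write down the candidate $L^\dagger=\sum_{i=1}^{n-1}\gamma_i^{-1}\,S_iS_i^T$ and verify directly that it satisfies the four Penrose axioms. The orthonormality relations make every product telescope, so that $LL^\dagger=L^\dagger L=\sum_{i=1}^{n-1}S_iS_i^T$, which is symmetric and idempotent, and from this $LL^\dagger L=L$ and $L^\dagger LL^\dagger=L^\dagger$ follow at once. This both confirms that $\sum_{i=1}^{n-1}\gamma_i^{-1}S_iS_i^T$ is indeed $L^\dagger$ and shows that the two products coincide.

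It then remains to evaluate $\sum_{i=1}^{n-1}S_iS_i^T$. Using completeness I rewrite it as $I-S_nS_n^T$, reducing the whole computation to identifying the single eigenvector $S_n$ attached to the eigenvalue $\gamma_n=0$. Because $L=D-A$ has all row sums equal to zero, the all-ones vector $\mathbf 1$ lies in $\ker L$; and since $G$ is connected, $0$ is a simple eigenvalue of $L$, so $\ker L=\mathrm{span}(\mathbf 1)$ is one-dimensional. The normalized eigenvector is therefore $S_n=\frac{1}{\sqrt n}\mathbf 1$, giving $S_nS_n^T=\frac1n J$ and hence $LL^\dagger=L^\dagger L=I-\frac1n J$.

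The only genuinely graph-theoretic step — and the place I expect to lean on a hypothesis — is the simplicity of the zero eigenvalue: connectivity is exactly what forces the zero-eigenspace to be spanned by $\mathbf 1$ alone, so that the omitted term in the completeness relation is precisely the rank-one matrix $\frac1n J$ and nothing more. Everything else is linear-algebraic bookkeeping with the orthonormal frame $\{S_i\}$.
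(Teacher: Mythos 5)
Your proof is correct. The paper does not prove this lemma at all --- it is imported by citation from Gutman and Xiao --- so there is no internal argument to compare against; your spectral-decomposition route (diagonalise $L$ with the orthonormal frame $\{S_i\}$, take $L^{\dagger}=\sum_{i=1}^{n-1}\gamma_i^{-1}S_iS_i^T$, verify the Penrose axioms, and use connectivity to identify $\ker L=\mathrm{span}(\mathbf{1})$ so that the missing projector is exactly $\frac{1}{n}J$) is the standard proof of this fact and every step checks out, including the one place where a hypothesis is genuinely needed, namely the simplicity of the zero eigenvalue.
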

\begin{theorem}\cite{xiao2003resistance}\label{evectorformX}
    Let $G$ be a connected graph with $n$ vertices, then
    \begin{itemize}
        \item [(i)] the matrix $X= (x_{ij})_{n\times n}= (L+\frac{1}{n}J)^{-1}$ exists,
        \item [(ii)] the eigenvalues of $X$ are $\frac{1}{\gamma_1}, \frac{1}{\gamma_2}, \ldots, \frac{1}{\gamma_{n-1}}, 1$,
        \item [(iii)] the eigenvectors of $X$ coincide with the Laplacian eigenvectors $S_1, S_2, \ldots, S_n$ of $G,$
        \item [(iv)] $x_{ij}=\frac{1}{n}+\sum_{k=1}^{n-1}\frac{1}{\gamma_k}s_{ki}s_{kj},\;\text{where}\; 1\leq i,j\leq n.$
    \end{itemize}
    
\end{theorem}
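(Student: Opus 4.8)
The plan is to diagonalize $L+\frac{1}{n}J$ explicitly in the orthonormal Laplacian eigenbasis $S_1,\ldots,S_n$ and then read off all four claims from this single diagonalization. Since $G$ is connected, $\gamma_n=0$ is a simple eigenvalue of $L$ whose normalized eigenvector is $S_n=\frac{1}{\sqrt{n}}\mathbf{1}$, where $\mathbf{1}$ is the all-ones vector, while the remaining eigenvalues satisfy $\gamma_1\geq\cdots\geq\gamma_{n-1}>0$. The key observation is that $J=\mathbf{1}\mathbf{1}^T=n\,S_nS_n^T$, so $J$ annihilates every $S_k$ orthogonal to $\mathbf{1}$ and acts as $n$ times the identity on the line spanned by $S_n$.

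Concretely, first I would compute $(L+\frac{1}{n}J)S_k$ for each $k$. For $1\leq k\leq n-1$, orthonormality gives $\mathbf{1}^T S_k=\sqrt{n}\,S_n^T S_k=0$, whence $JS_k=\mathbf{1}(\mathbf{1}^T S_k)=0$ and therefore $(L+\frac{1}{n}J)S_k=\gamma_k S_k$. For $k=n$, we have $LS_n=0$ and $\frac{1}{n}JS_n=S_nS_n^TS_n=S_n$, so $(L+\frac{1}{n}J)S_n=S_n$. Thus $L+\frac{1}{n}J$ is symmetric with orthonormal eigenbasis $S_1,\ldots,S_n$ and eigenvalues $\gamma_1,\ldots,\gamma_{n-1},1$.

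Since all of these eigenvalues are strictly positive, $L+\frac{1}{n}J$ is positive definite, hence nonsingular, so $X=(L+\frac{1}{n}J)^{-1}$ exists, proving (i). Because $X$ is the inverse of this matrix, it has the same eigenvectors $S_1,\ldots,S_n$ with reciprocal eigenvalues $\frac{1}{\gamma_1},\ldots,\frac{1}{\gamma_{n-1}},1$, which settles (ii) and (iii). Finally, for (iv) I would write the spectral decomposition $X=\sum_{k=1}^{n-1}\frac{1}{\gamma_k}S_kS_k^T+S_nS_n^T$ and take its $(i,j)$ entry: the last term contributes $s_{ni}s_{nj}=\frac{1}{n}$ since every component of $S_n$ equals $\frac{1}{\sqrt{n}}$, giving $x_{ij}=\frac{1}{n}+\sum_{k=1}^{n-1}\frac{1}{\gamma_k}s_{ki}s_{kj}$.

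The argument is essentially one spectral computation with no serious obstacle; the only points requiring care are the correct identification of $S_n$ as the constant eigenvector together with the identity $J=nS_nS_n^T$, and the bookkeeping in passing from $X$'s eigenvalues to its entrywise formula in (iv) — in particular, isolating the $k=n$ term (which supplies the $\frac{1}{n}$) from the reciprocal-eigenvalue terms indexed by $k=1,\ldots,n-1$.
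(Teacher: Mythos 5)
Your proof is correct: the diagonalization of $L+\frac{1}{n}J$ in the orthonormal Laplacian eigenbasis, using $J=nS_nS_n^T$ with $S_n=\frac{1}{\sqrt{n}}\mathbf{1}$ and the connectedness of $G$ to get $\gamma_{n-1}>0$, cleanly yields all four parts, and the spectral decomposition correctly produces the entrywise formula in (iv). The paper itself states this as a cited preliminary (Theorem \ref{evectorformX}, from the reference of Xiao and Gutman) without reproducing a proof, and your argument is the standard one underlying that source, so there is nothing to compare beyond noting that your reasoning is complete and matches the intended derivation.
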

\begin{proposition}\cite{zhou2016resistance}\label{resreg}
A connected graph $G$ with $n$ vertices is resistance regular if and only if $l_{11}^{\dagger}= \cdots = l_{nn}^{\dagger}.$
\end{proposition}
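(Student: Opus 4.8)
The plan is to translate the resistance-degree (row-sum) condition directly into a condition on the diagonal of $L^{\dagger}$, using the defining formula $r_{ij}=l_{ii}^{\dagger}+l_{jj}^{\dagger}-2l_{ij}^{\dagger}$ recorded in the introduction. Fixing a vertex $v_i$ and summing this identity over all $j\in\{1,\ldots,n\}$ gives
\[
R_i=\sum_{j=1}^{n}r_{ij}=\sum_{j=1}^{n}l_{ii}^{\dagger}+\sum_{j=1}^{n}l_{jj}^{\dagger}-2\sum_{j=1}^{n}l_{ij}^{\dagger}
= n\,l_{ii}^{\dagger}+\mathrm{tr}(L^{\dagger})-2\sum_{j=1}^{n}l_{ij}^{\dagger},
\]
since the first sum consists of $n$ equal terms and the second is the trace of $L^{\dagger}$, a quantity independent of $i$.

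The key step is to show that the remaining sum vanishes, that is, that every row sum of $L^{\dagger}$ is zero. For a connected graph the null space of $L$ is spanned by the all-ones vector $\mathbf{1}$, and since $L$ is symmetric its Moore-Penrose inverse $L^{\dagger}$ has the same null space; hence $L^{\dagger}\mathbf{1}=0$, which says precisely that each row of $L^{\dagger}$ sums to zero. Alternatively, the same conclusion follows from Theorem~\ref{evectorformX}: writing $l_{ij}^{\dagger}=\sum_{k=1}^{n-1}\frac{1}{\gamma_k}s_{ki}s_{kj}$ and using that each Laplacian eigenvector $S_k$ with $k<n$ is orthogonal to $\mathbf{1}$, so $\sum_{j}s_{kj}=0$. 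Substituting $\sum_{j}l_{ij}^{\dagger}=0$ into the display above collapses it to
\[
R_i=n\,l_{ii}^{\dagger}+\mathrm{tr}(L^{\dagger}).
\]

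With this identity the proposition is immediate. Because $\mathrm{tr}(L^{\dagger})$ does not depend on $i$, the resistance degrees $R_1,\ldots,R_n$ are all equal if and only if the diagonal entries $l_{11}^{\dagger},\ldots,l_{nn}^{\dagger}$ are all equal; in that case $G$ is $k$-resistance regular with $k=n\,l_{11}^{\dagger}+\mathrm{tr}(L^{\dagger})$. This settles both implications simultaneously. I expect no serious obstacle here, as the only non-computational ingredient is the vanishing of the row sums of $L^{\dagger}$; once that is in hand, the equivalence is a one-line comparison. The single point to state with care is the null-space argument, since it is exactly what forces the cross-term $\sum_{j}l_{ij}^{\dagger}$ to disappear and thereby decouples $R_i$ into a variable part $n\,l_{ii}^{\dagger}$ and a constant part $\mathrm{tr}(L^{\dagger})$.
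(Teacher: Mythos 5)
Your argument is correct: summing $r_{ij}=l_{ii}^{\dagger}+l_{jj}^{\dagger}-2l_{ij}^{\dagger}$ over $j$ and using $L^{\dagger}\mathbf{1}=0$ (valid since $G$ is connected, so the null space of the symmetric matrix $L$, and hence of $L^{\dagger}$, is spanned by $\mathbf{1}$) yields $R_i=n\,l_{ii}^{\dagger}+\mathrm{tr}(L^{\dagger})$, from which the equivalence is immediate. The paper itself states this proposition as a known result from \cite{zhou2016resistance} without proof, and your derivation is the standard one used there, so there is nothing to flag.
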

\begin{theorem}\cite{zhou2016resistance}\label{distinct}
  A connected graph with two distinct $R$-eigenvalues is a complete graph.
\end{theorem}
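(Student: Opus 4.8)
The plan is to prove the nontrivial direction: if the resistance distance matrix $R=R(G)$ of a connected graph $G$ on $n$ vertices has exactly two distinct eigenvalues $\rho_1>\rho_2$, then $G=K_n$. (The converse is a one-line computation, since $R(K_n)=\tfrac{2}{n}(J-I)$ has only the eigenvalues $\tfrac{2(n-1)}{n}$ and $-\tfrac{2}{n}$.) The case $n=2$ is immediate, so assume $n\ge 3$. Because $G$ is connected, every off-diagonal entry $r_{ij}$ $(i\neq j)$ is positive while every diagonal entry is $0$; hence $R$ is a nonnegative symmetric matrix that is irreducible (all off-diagonal entries are positive). First I would invoke the Perron--Frobenius theorem: the spectral radius of $R$ is a \emph{simple} eigenvalue admitting a strictly positive eigenvector. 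Since all eigenvalues of the symmetric matrix $R$ are real, this spectral radius is the largest one, $\rho_1$; thus $\rho_1>0$ is simple and $\rho_2$ has multiplicity $n-1$. Then $\operatorname{tr}R=0$ gives $\rho_1+(n-1)\rho_2=0$, so $\rho_2<0$ and $\rho_1=-(n-1)\rho_2$.

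Next I would pin down the Perron eigenvector. Let $\mathbf{x}$ be a unit eigenvector for $\rho_1$ with $\mathbf{x}>0$. As $R$ has only the eigenvalues $\rho_1,\rho_2$ with $\rho_1$ simple, its spectral decomposition reads
\[
R=\rho_2 I+(\rho_1-\rho_2)\,\mathbf{x}\mathbf{x}^{T}.
\]
Comparing the $(i,i)$ entries and using $r_{ii}=0$ gives $\rho_2+(\rho_1-\rho_2)x_i^{2}=0$, so $x_i^{2}=\tfrac{-\rho_2}{\rho_1-\rho_2}$ is the \emph{same} for every $i$. Since $\mathbf{x}>0$, all its entries are equal, i.e. $\mathbf{x}=\tfrac{1}{\sqrt{n}}\mathbf{1}$ (equivalently, $R\mathbf{1}=\rho_1\mathbf{1}$, so $G$ is resistance regular, consistent with Proposition \ref{resreg}). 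Substituting back yields $R=\rho_2 I+\tfrac{\rho_1-\rho_2}{n}J$, and reading off the diagonal forces $\rho_2=-\tfrac{\rho_1-\rho_2}{n}=:-r$, whence
\[
R=r(J-I),\qquad r=\tfrac{\rho_1-\rho_2}{n}>0 .
\]
Thus all pairwise resistance distances equal the constant $r$.

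The final and, I expect, decisive step is to show that resistance-equidistance forces completeness; this is the real obstacle, since a priori one might imagine a non-complete graph in which every pair of vertices has the same effective resistance. Here I would feed the equidistance relation into the identity $R=\mathbf{u}\mathbf{1}^{T}+\mathbf{1}\mathbf{u}^{T}-2L^{\dagger}$, where $\mathbf{u}=(l^{\dagger}_{11},\dots,l^{\dagger}_{nn})^{T}$, which merely restates $r_{ij}=l^{\dagger}_{ii}+l^{\dagger}_{jj}-2l^{\dagger}_{ij}$. Multiplying on the left by $L$ and using $L\mathbf{1}=0$, Lemma \ref{lldagger} ($LL^{\dagger}=I-\tfrac1nJ$), together with $LR=rL(J-I)=-rL$, I obtain
\[
(L\mathbf{u})\mathbf{1}^{T}=-rL+2I-\tfrac{2}{n}J .
\]
The left-hand side has identical columns, hence so must the right-hand side. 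Comparing, for a fixed row $i$, two off-diagonal entries in distinct columns $j,k\neq i$ gives $-rL_{ij}-\tfrac2n=-rL_{ik}-\tfrac2n$, so $L_{ij}=L_{ik}$ because $r>0$. Therefore all off-diagonal entries of row $i$ of $L$ coincide; being each either $0$ or $-1$, they are either all $0$ or all $-1$. Connectivity forbids an isolated vertex, so they are all $-1$, i.e. $v_i$ is adjacent to every other vertex. As $i$ was arbitrary, $G=K_n$, which completes the argument.
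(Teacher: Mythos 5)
Your argument is correct, and it is worth noting that the paper itself offers no proof to compare against: Theorem~\ref{distinct} is imported verbatim from \cite{zhou2016resistance} as a preliminary, so what you have written is a genuinely self-contained derivation rather than a variant of an argument in the text. The structure is sound at every stage. Perron--Frobenius applies because connectivity makes every off-diagonal $r_{ij}$ strictly positive, hence $R$ irreducible, and for a symmetric nonnegative matrix the Perron root is necessarily the largest eigenvalue, so $\rho_1$ is simple; the spectral decomposition $R=\rho_2 I+(\rho_1-\rho_2)\mathbf{x}\mathbf{x}^T$ together with the zero diagonal then correctly forces $\mathbf{x}=\tfrac{1}{\sqrt n}\mathbf{1}$ and $R=r(J-I)$. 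You are also right to flag the last step as the real content: constant effective resistance does not \emph{obviously} force completeness, and your device of left-multiplying $R=\mathbf{u}\mathbf{1}^T+\mathbf{1}\mathbf{u}^T-2L^{\dagger}$ by $L$ (using $L\mathbf{1}=0$, Lemma~\ref{lldagger}, and $L(J-I)=-L$) reduces the problem to the observation that $(L\mathbf{u})\mathbf{1}^T$ has identical columns, whence all off-diagonal entries in each row of $L$ coincide and must all equal $-1$ by connectivity. The only hypotheses you lean on implicitly --- $n\ge 3$ so that two distinct columns $j,k\neq i$ exist to compare, and $r>0$ so that $L_{ij}=L_{ik}$ can be cancelled --- are both secured by your opening reductions. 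An alternative shortcut for the final step would be to note that $R=r(J-I)$ pins down $L^{\dagger}$ completely (since $L^{\dagger}\mathbf{1}=0$ determines $\mathbf{u}$ from the row sums) as a scalar multiple of $I-\tfrac1nJ$, forcing $L$ to be a multiple of the same matrix and hence $G=K_n$; but your version is equally rigorous and arguably more transparent about where connectivity enters.
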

\begin{lemma}\label{BS} \cite{davis1979circulant}
Let $B= \begin{bmatrix}B_0& B_1\\ B_1& B_0\end{bmatrix}_{2\times 2}$ be a block symmetric matrix. Then the eigenvalues of $B_0+B_1$ and $B_0-B_1$ form the eigenvalues of $B.$
\end{lemma}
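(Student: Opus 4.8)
The plan is to exhibit an explicit orthogonal change of basis that block-diagonalizes $B$, after which the statement reduces to the standard fact that the characteristic polynomial of a block-diagonal matrix is the product of the characteristic polynomials of its diagonal blocks. Writing $I$ for the identity matrix of the same order as $B_0$, I would introduce the symmetric orthogonal matrix
\[
P=\frac{1}{\sqrt{2}}\begin{bmatrix} I & I\\ I & -I\end{bmatrix},
\]
which satisfies $P^{T}=P$ and $P^{2}=I$, so that $P^{-1}=P$. Conjugation of $B$ by $P$ is therefore a genuine similarity transformation, and a direct block multiplication should give
\[
P B P=\begin{bmatrix} B_0+B_1 & 0\\ 0 & B_0-B_1\end{bmatrix}.
\]
Since similar matrices share the same characteristic polynomial, and hence the same spectrum counted with multiplicity, the eigenvalues of $B$ are precisely those of $B_0+B_1$ together with those of $B_0-B_1$, which is the assertion.

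The computation of $PBP$ is the only step that requires any care: one first forms $PB=\tfrac{1}{\sqrt{2}}\left[\begin{smallmatrix} B_0+B_1 & B_0+B_1\\ B_0-B_1 & B_1-B_0\end{smallmatrix}\right]$ and then multiplies on the right by $P$, at which point the factors of $\tfrac12$ combine with the cancellations $(B_0+B_1)-(B_0+B_1)=0$ and $(B_0-B_1)+(B_1-B_0)=0$ to make the off-diagonal blocks vanish. I do not expect a genuine obstacle here; $P$ is simply the $2\times 2$ Hadamard (discrete Fourier) transform applied blockwise, and it always sends $B$ to the symmetric/antisymmetric decomposition of its two coordinate blocks.

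As an alternative and more transparent route, I would verify the eigenvector correspondence directly. If $(B_0+B_1)v=\lambda v$, then $B\begin{bmatrix} v\\ v\end{bmatrix}=\lambda\begin{bmatrix} v\\ v\end{bmatrix}$, and if $(B_0-B_1)w=\mu w$, then $B\begin{bmatrix} w\\ -w\end{bmatrix}=\mu\begin{bmatrix} w\\ -w\end{bmatrix}$. Because the ``symmetric'' vectors $\begin{bmatrix} v\\ v\end{bmatrix}$ and the ``antisymmetric'' vectors $\begin{bmatrix} w\\ -w\end{bmatrix}$ occupy complementary halves of the ambient space, assembling a basis of eigenvectors of $B_0+B_1$ together with one of $B_0-B_1$ produces a full set of eigenvectors of $B$ realizing the stated eigenvalues. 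This second phrasing also makes clear that the symmetry of $B$ (inherited from symmetry of $B_0$ and $B_1$) is not actually needed for the eigenvalue count --- only the block-circulant pattern of equal diagonal blocks and equal off-diagonal blocks is used.
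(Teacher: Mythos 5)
Your proposal is correct. The paper itself gives no proof of this lemma --- it is quoted verbatim with a citation to Davis's book on circulant matrices --- so there is nothing internal to compare against; your similarity argument via the orthogonal matrix $P=\tfrac{1}{\sqrt{2}}\left[\begin{smallmatrix} I & I\\ I & -I\end{smallmatrix}\right]$ is the standard proof and the block computation $PBP=\left[\begin{smallmatrix} B_0+B_1 & 0\\ 0 & B_0-B_1\end{smallmatrix}\right]$ checks out. One small caveat on your ``more transparent'' alternative: assembling eigenvectors of $B_0+B_1$ and $B_0-B_1$ into a full eigenbasis of $B$ presupposes that each of those two matrices is diagonalizable, which holds in the paper's applications (where $B_0$ and $B_1$ are real symmetric) but not for arbitrary blocks; your closing remark that only the block pattern is needed, not symmetry, is therefore accurate for the similarity route but not for the eigenvector-assembly route, where one would instead have to fall back on the factorization of the characteristic polynomial, e.g.\ via $\det\left[\begin{smallmatrix} B_0-\lambda I & B_1\\ B_1 & B_0-\lambda I\end{smallmatrix}\right]=\det\bigl((B_0+B_1)-\lambda I\bigr)\det\bigl((B_0-B_1)-\lambda I\bigr)$, which your first argument already supplies.
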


\begin{proposition}\label{kf}\cite{maden2013bounds}
For a connected graph $G$ on $n$ ($\geq 2$) vertices,
 $$\rho_1(G)\geq \frac{2\mathcal{K}f(G)}{n}.$$ The equality holds if and only if $G$ is resistance regular.
\end{proposition}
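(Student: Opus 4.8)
The plan is to recognize the Kirchhoff index as a quadratic form in the resistance distance matrix and then invoke the Rayleigh-quotient characterization of the largest eigenvalue. First I would observe that, since $R(G)$ is symmetric with zero diagonal, summing over all ordered pairs doubles the sum over $i<j$, so that $2\mathcal{K}f(G)=\sum_{i,j}r_{ij}=\mathbf{1}^{T}R(G)\mathbf{1}$, where $\mathbf{1}$ is the all-ones vector of length $n$. Because $R(G)$ is a real symmetric matrix, its algebraically largest eigenvalue $\lambda_{\max}$ obeys $\lambda_{\max}\geq \frac{x^{T}R(G)x}{x^{T}x}$ for every nonzero $x$; taking $x=\mathbf{1}$ and using $\mathbf{1}^{T}\mathbf{1}=n$ immediately gives $\lambda_{\max}\geq \frac{2\mathcal{K}f(G)}{n}$.

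Next I would identify $\lambda_{\max}$ with the resistance spectral radius $\rho_1(G)$. Since $G$ is connected, resistance distance is a metric, so every off-diagonal entry of $R(G)$ is strictly positive while the diagonal is zero; hence $R(G)$ is a nonnegative irreducible matrix. By the Perron--Frobenius theorem its spectral radius equals $\lambda_{\max}$ and is attained at a positive eigenvector, so $\rho_1(G)=\lambda_{\max}$. Combining the two steps yields $\rho_1(G)=\lambda_{\max}\geq \frac{2\mathcal{K}f(G)}{n}$, which is the stated inequality.

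For the equality characterization I would analyze tightness of the Rayleigh bound. Equality $\frac{\mathbf{1}^{T}R(G)\mathbf{1}}{n}=\lambda_{\max}$ holds exactly when $\mathbf{1}$ lies in the eigenspace of $\lambda_{\max}$, i.e. when $R(G)\mathbf{1}=\lambda_{\max}\mathbf{1}$, which says every row sum $R_i$ equals $\lambda_{\max}$; this is precisely the definition of $G$ being resistance regular. Conversely, if $G$ is $k$-resistance regular then $\mathbf{1}$ is a positive eigenvector of $R(G)$ with eigenvalue $k$, so by Perron--Frobenius $k=\rho_1(G)$, and summing the row sums gives $nk=\sum_{i}R_i=2\mathcal{K}f(G)$, whence $\rho_1(G)=\frac{2\mathcal{K}f(G)}{n}$.

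The only delicate point is the passage between the algebraically largest eigenvalue and $\rho_1(G)$, which is defined as the maximum modulus of the $R$-eigenvalues; this is exactly where irreducibility and the Perron--Frobenius theorem are needed, guaranteeing that the top eigenvalue is positive and dominates all others in absolute value. Once that identification is in place, everything reduces to a routine manipulation of the Rayleigh quotient, so I expect no computational obstacle.
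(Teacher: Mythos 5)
Your proof is correct: the identity $2\mathcal{K}f(G)=\mathbf{1}^{T}R(G)\mathbf{1}$, the Rayleigh-quotient bound, the Perron--Frobenius identification of $\rho_1(G)$ with the largest eigenvalue, and the equality analysis via $R(G)\mathbf{1}=\rho_1(G)\mathbf{1}$ are all sound. Note that the paper itself states this proposition as a known preliminary from the cited reference without reproducing a proof; your argument is the standard one for such spectral-radius/index bounds, so there is nothing to flag.
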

\begin{theorem}\label{twobds}\cite{maden2013bounds}
Let $G$ be a connected graph with $n$ ($\geq 2$) vertices. Then $$\sqrt{\frac{\sum_{i=1}^{n}R_i^2}{n}}\leq \rho_1(G)\leq \max\limits_{1\leq i\leq n}\sum_{i=1}^{n}r_{ij}\frac{R_j}{R_i}.$$
Moreover, the equality holds if and only if $G$ is resistance regular.

\end{theorem}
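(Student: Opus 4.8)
The plan is to exploit that $R=R(G)$ is a symmetric, entrywise nonnegative matrix with zero diagonal whose off-diagonal entries $r_{ij}$ are all strictly positive (the effective resistance between distinct vertices of a connected graph is positive). Hence $R$ is irreducible, and since its pattern graph is the complete graph $K_n$, which contains odd cycles for $n\geq 3$, the matrix $R$ is in fact primitive. By the Perron--Frobenius theorem the spectral radius $\rho_1=\rho_1(G)$ is then the largest eigenvalue of $R$, it is simple, it strictly dominates every other eigenvalue in modulus, and it admits a positive unit eigenvector $w$ with $Rw=\rho_1 w$. I will also use two identities: the vector of row sums is $R\mathbf 1=(R_1,\dots,R_n)^{T}$, so $\|R\mathbf 1\|^2=\sum_{i=1}^n R_i^2$, and $\bigl(R(R\mathbf 1)\bigr)_i=\sum_{j=1}^n r_{ij}R_j=T_i$. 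The degenerate case $n=2$ is automatically resistance regular, so I treat $n\geq 3$.

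For the lower bound I would read $\rho_1$ as the spectral norm $\|R\|_2$ (valid since $R$ is symmetric and $\rho_1$ is its largest absolute eigenvalue) and test it against $\mathbf 1$:
\[
\rho_1=\|R\|_2\ \geq\ \frac{\|R\mathbf 1\|}{\|\mathbf 1\|}=\sqrt{\frac{\sum_{i=1}^n R_i^2}{n}}.
\]
Equality forces $\mathbf 1$ to lie in the span of the eigenvectors whose eigenvalue has modulus $\rho_1$; by primitivity that span is exactly the line through $w$, so $R\mathbf 1=\rho_1\mathbf 1$, all row sums equal $\rho_1$, and $G$ is resistance regular. The converse is the immediate computation $R\mathbf 1=\rho_1\mathbf 1\Rightarrow\sum_i R_i^2=n\rho_1^2$.

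For the upper bound I would apply the Collatz--Wielandt estimate to the strictly positive test vector $x=R\mathbf 1$. Pairing $Rx$ with the positive eigenvector $w$ and using $w^{T}R=\rho_1 w^{T}$ gives
\[
\rho_1\,(w^{T}x)=w^{T}Rx=\sum_{i=1}^n w_i x_i\,\frac{(Rx)_i}{x_i}\ \leq\ \Bigl(\max_{1\leq i\leq n}\frac{(Rx)_i}{x_i}\Bigr)(w^{T}x),
\]
and since $(Rx)_i/x_i=T_i/R_i$ and $w^{T}x>0$ this is exactly $\rho_1\leq\max_i T_i/R_i$. Equality in the chain holds precisely when $T_i/R_i$ is constant, i.e. when $Rx=\rho_1 x$; if $G$ is resistance regular then $x=\rho_1\mathbf 1$ and every $T_i/R_i=\rho_1$, which is the easy direction.

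The main obstacle is the converse of the upper-bound equality: from $R(R\mathbf 1)=\rho_1\,(R\mathbf 1)$ I must recover $R\mathbf 1\parallel\mathbf 1$. Writing $\mathbf 1=\tfrac{1}{\rho_1}R\mathbf 1+k_0$ shows $Rk_0=0$, so the claim is equivalent to $\mathbf 1$ having no component in $\ker R$; expanding $\mathbf 1$ in an eigenbasis, the equality condition pins its support to the eigenvalues $0$ and $\rho_1$, and one must exclude the eigenvalue $0$. This is where the real work lies, and I would close it by invoking nonsingularity of the resistance matrix $R$, which forces $\ker R=\{0\}$ and hence $\mathbf 1=\tfrac{1}{\rho_1}R\mathbf 1$ to be the Perron vector. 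As a cross-check I would translate ``resistance regular'' through Proposition \ref{resreg} into the equal-diagonal condition $l_{11}^{\dagger}=\cdots=l_{nn}^{\dagger}$, which gives an alternative handle on the converse should the nonsingularity route prove delicate.
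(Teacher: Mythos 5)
Your proof is essentially correct, but note that the paper does not prove this statement at all: Theorem~\ref{twobds} is quoted as a known result from \cite{maden2013bounds} in the Preliminaries, so there is no in-paper proof to match. Your argument is nevertheless a faithful reconstruction in the same spirit as the proofs the paper \emph{does} give for its analogous results: your lower bound is the Rayleigh-quotient estimate $\rho_1=\|R\|_2\geq\|R\mathbf 1\|/\|\mathbf 1\|$, which is exactly the device used in the proof of Theorem~\ref{ti} (there with the normalized vector $U=R\mathbf 1/\|R\mathbf 1\|$), while your upper bound via Collatz--Wielandt applied to $x=R\mathbf 1$ replaces the ``two largest components of the eigenvector of $T(G)^{-1}RT(G)$'' trick used in Theorems~\ref{upperbd} and \ref{lowerbd}; both are standard and both yield $\rho_1\leq\max_i T_i/R_i$. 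Two points deserve emphasis. First, you correctly identified the only genuinely delicate step: equality in the upper bound gives $R(R\mathbf 1)=\rho_1 R\mathbf 1$, i.e.\ a priori only pseudo resistance regularity, and passing to $R\mathbf 1\parallel\mathbf 1$ requires $\ker R=\{0\}$. That nonsingularity is legitimate here because the paper itself invokes the inertia of $R(G)$ (one positive and $n-1$ negative eigenvalues) without proof in Section~4, and it also gives an alternative to your primitivity argument for the lower-bound equality case: each negative eigenvalue has modulus strictly less than $\rho_1$ since the $n-1$ of them sum to $-\rho_1$. Second, your reading of the right-hand side as $\max_i\sum_j r_{ij}R_j/R_i$ is the intended one; the inner summation index in the paper's display is a typo ($\sum_{i=1}^n$ should be $\sum_{j=1}^n$).
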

\begin{theorem}\label{irr}\cite{indulal2008distance}
Let $M$ be a real symmetric irreducible square matrix in  which the sum of each row is the same constant. Then there exists a polynomial $Q(x)$
 such that $Q(M)=J$.
\end{theorem}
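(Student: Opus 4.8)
The plan is to combine the spectral theorem for the symmetric matrix $M$ with Perron--Frobenius theory. First I would record the two structural consequences of the hypotheses. Since $M$ is real symmetric it is orthogonally diagonalizable with real eigenvalues; writing its distinct eigenvalues as $\mu_1,\dots,\mu_s$ with associated orthogonal spectral projections $E_1,\dots,E_s$, we have $M=\sum_{t=1}^{s}\mu_t E_t$ and $I=\sum_{t=1}^{s}E_t$. Because every row of $M$ sums to the same constant $c$, the all-ones vector $\mathbf 1=(1,\dots,1)^T$ satisfies $M\mathbf 1=c\,\mathbf 1$, so $c$ is one of the $\mu_t$, say $\mu_1=c$, and $\mathbf 1$ lies in its eigenspace. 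Noting that $J=\mathbf 1\mathbf 1^{T}$ and that $\tfrac1n J$ is exactly the orthogonal projection onto $\operatorname{span}(\mathbf 1)$, the theorem reduces to two claims: (a) the eigenspace of $M$ for the eigenvalue $c$ is one-dimensional, equal to $\operatorname{span}(\mathbf 1)$, so that $E_1=\tfrac1n J$; and (b) each spectral projection $E_t$ is a polynomial in $M$.

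The crux is claim (a), and it is the only place irreducibility is genuinely used. For the matrices to which this lemma is applied (distance-type matrices such as $R(G)$) the entries are nonnegative, and I would invoke Perron--Frobenius: an irreducible nonnegative matrix has spectral radius equal to a \emph{simple} eigenvalue with strictly positive eigenvector. A nonnegative matrix with constant row sum $c$ has spectral radius exactly $c$ (the row sums bound the spectral radius and $\mathbf 1$ attains it), and $\mathbf 1$ is a positive eigenvector for $c$; simplicity then forces the $c$-eigenspace to be precisely $\operatorname{span}(\mathbf 1)$, i.e. $E_1=\tfrac1n J$. I expect this simplicity statement to be the main obstacle, and it is essential: for a merely real symmetric irreducible matrix the eigenvalue $c$ can be repeated, in which case no polynomial in $M$ can separate $\mathbf 1$ from the remainder of the $c$-eigenspace and the conclusion fails. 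Thus nonnegativity (so that irreducibility carries Perron--Frobenius force), implicit in the intended application, is doing real work.

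For claim (b) I would use Lagrange interpolation at the distinct eigenvalues. Since $M$ is diagonalizable with distinct eigenvalues $\mu_1,\dots,\mu_s$, the projection onto the $\mu_1$-eigenspace is
$$E_1=\prod_{t=2}^{s}\frac{M-\mu_t I}{\mu_1-\mu_t},$$
which is a polynomial in $M$, as one checks by evaluating both sides on an orthonormal eigenbasis. Combining with claim (a), $E_1=\tfrac1n J$, so setting
$$Q(x)=n\prod_{t=2}^{s}\frac{x-\mu_t}{\mu_1-\mu_t}$$
gives $Q(M)=nE_1=J$, as required; the degree of $Q$ is one less than the number of distinct eigenvalues of $M$. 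The decisive ingredient throughout is the simplicity of the row-sum eigenvalue: symmetry supplies the clean spectral decomposition and irreducibility supplies simplicity, and the polynomial construction is then purely mechanical.
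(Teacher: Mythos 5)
The paper states this result as a cited preliminary (from \cite{indulal2008distance}) and gives no proof of its own, so there is nothing internal to compare against; your argument is the standard Hoffman-polynomial argument, and it is essentially correct in the form in which the result is actually used later in the paper. Your two reductions are both sound: Lagrange interpolation at the distinct eigenvalues does realize the spectral projection onto the $\mu_1$-eigenspace as a polynomial in $M$, and once that eigenspace is exactly $\operatorname{span}(\mathbf{1})$ the projection is $\tfrac1n J$, so $Q(x)=n\prod_{t\ge 2}(x-\mu_t)/(\mu_1-\mu_t)$ works; this matches the explicit polynomial the paper writes down in its Proposition on $Q(R(G))=J$ for resistance regular graphs.

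Your central observation --- that irreducibility of a merely real symmetric matrix does not force the row-sum eigenvalue to be simple, and that without simplicity the conclusion is outright false --- is correct and worth making explicit. A concrete counterexample to the literal statement: with $v=(1,1,-1,-1)^T$, $w=(1,-1,1,-1)^T$, $u=(1,-1,-1,1)^T$, the matrix $M=\tfrac14 ww^T+\tfrac12 uu^T$ is real symmetric with all off-diagonal entries nonzero (hence irreducible) and all row sums $0$, yet $0$ is an eigenvalue of multiplicity two with eigenspace $\operatorname{span}(\mathbf{1},v)$, so every polynomial in $M$ acts as one scalar on both $\mathbf{1}$ and $v$ while $J$ does not. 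The result is therefore really a statement about irreducible \emph{nonnegative} matrices with constant row sums, where Perron--Frobenius gives simplicity of the row-sum eigenvalue, exactly as you argue. Since the only matrix to which the paper applies the lemma is $R(G)$ for a connected graph --- whose off-diagonal entries are all strictly positive, so it is nonnegative and irreducible --- your proof covers every use made of the theorem; the only defect is that what you have proved is the corrected statement rather than the one transcribed in the paper, and you have said so honestly.
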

\section{Resistance regular graphs}\label{sec3}
This section provides a necessary and sufficient conditions for a graph to be resistance regular.\\
 Consider a connected graph $G$, and let \[ X=\left(L+\frac{1}{n}J\right)^{-1}= \begin{bmatrix}
 x_{11}&x_{12}&\cdots&x_{1n}\\
 x_{12}&x_{22}&\cdots&x_{2n}\\
 \vdots&\vdots&\ddots&\vdots\\
 x_{1n}&x_{2n}&\cdots&x_{nn}
\end{bmatrix}.\]
Then the graph $G$ is resistance regular if and only if $x_{11}=x_{22}=\cdots=x_{nn}=x'$(say) \cite{zhou2016resistance}. All the walk-regular graphs are resistance regular \cite{zhou2016resistance}. Also, connected regular graph with at most four distinct eigenvalues are walk-regular \cite{brouwer2011spectra}. From this we get the following proposition.
\begin{proposition}
    Any regular graph with at most four distinct $A$-eigenvalues is resistance regular.
\end{proposition}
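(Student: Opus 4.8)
The plan is to chain together the two facts recorded immediately before the statement. Since every graph in this paper is connected, let $G$ be a connected regular graph having at most four distinct $A$-eigenvalues. By the cited result \cite{brouwer2011spectra}, such a graph is walk-regular, and by \cite{zhou2016resistance} every walk-regular graph is resistance regular; composing these two implications yields the claim. At the level of this paper the proof is therefore a one-line deduction, and the only thing to check is that the connectivity and regularity hypotheses line up with those of the quoted results, which they do.

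If one instead wanted the intermediate implication \emph{walk-regular $\Rightarrow$ resistance regular} to be self-contained, I would argue through Proposition \ref{resreg}, which says that $G$ is resistance regular exactly when the diagonal entries $l_{11}^{\dagger},\dots,l_{nn}^{\dagger}$ of the Moore--Penrose inverse coincide. First I would note that walk-regularity already forces regularity, since the number of closed walks of length $2$ at $v_i$ equals $\mathrm{deg}_G(v_i)$; so we may write $L=dI-A$ for the common degree $d$. Next I would express $L^{\dagger}$ as a polynomial in $A$: for a connected regular graph the all-one vector spans the eigenspace of the top $A$-eigenvalue $d$, whence the spectral projection onto it is precisely $\tfrac{1}{n}J$ and is a polynomial in $A$ by interpolation over the finite spectrum. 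Consequently $L+\tfrac{1}{n}J$ is a polynomial in $A$, its inverse $X$ (which exists by Theorem \ref{evectorformX}) is again a polynomial in $A$, and since Theorem \ref{evectorformX}(iv) gives $L^{\dagger}=X-\tfrac{1}{n}J$, the matrix $L^{\dagger}$ is itself a polynomial in $A$.

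Writing $L^{\dagger}=\sum_t c_t A^t$, the diagonal entry $l_{ii}^{\dagger}=\sum_t c_t (A^t)_{ii}$ is a fixed linear combination of the closed-walk counts $(A^t)_{ii}$. Walk-regularity makes each $(A^t)_{ii}$ independent of $i$ (the cases $t=0,1$ being trivial, as there are no loops), so every $l_{ii}^{\dagger}$ is the same constant and Proposition \ref{resreg} completes the argument. The step I expect to be the only mild obstacle is the middle one, namely justifying cleanly that $\tfrac{1}{n}J$, and hence $L^{\dagger}$, is a genuine polynomial in $A$; this rests on connectivity together with regularity, so that the $d$-eigenspace is one-dimensional and equals the span of the all-one vector. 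Theorem \ref{irr} provides the same flavour of polynomial representation and could be invoked as an alternative. Everything else is bookkeeping, and in the paper's context the two quoted results render the proposition immediate.
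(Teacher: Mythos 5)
Your first paragraph is exactly the paper's argument: the proposition is derived by chaining the cited facts that connected regular graphs with at most four distinct adjacency eigenvalues are walk-regular and that walk-regular graphs are resistance regular. The additional self-contained justification you sketch for the second implication is correct but goes beyond what the paper records, which offers no further proof.
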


Distance-regular graphs are a part of walk-regular graphs and so they are resistance regular. But there are graphs which are resistance regular but not distance regular. For example see Figure \ref{fig7.11}.

\begin{figure}[H]
    \centering
        \begin{tikzpicture}[scale=0.6,inner sep=1.2pt]
            \draw (0,0) node(4) [circle,draw,fill] (4) {};
            \draw (0,6) node(1) [circle,draw,fill] (1) {};
            \draw (-2,4) node(2) [circle,draw,fill] (2) {};     
            \draw (-2,2) node(3) [circle,draw,fill] (3) {};
            \draw (2,2) node(5) [circle,draw,fill] (5){};
            \draw (2,4) node(6) [circle,draw,fill] (6){};
            \draw (0, -0.2) node[below]{$4$};   
            \draw (0, 6.9) node[below]{$1$}; 
            \draw (-2.4, 4) node[]{$2$}; 
            \draw (-2.4, 2) node[]{$3$};
            \draw (2.4, 2) node[]{$5$};
            \draw (2.4, 4) node[]{$6$};
            \draw [-] (1) to (2) to (3) to (4) to (5) to (6) to (1) to (4);
            \draw[-] (2) to (6);
            \draw[-] (3) to (5);
            \node at (0,-1.5) {$G$};
            \node at (10,2) {\(
        R(G) = \begin{bmatrix}
            0&\frac{8}{15}&\frac{11}{15}&\frac{3}{5}&\frac{11}{15}&\frac{8}{15}\\
            &&&&&\\
            \frac{8}{15}&0&\frac{3}{5}&\frac{11}{15}&\frac{11}{15}&\frac{8}{15}\\
             &&&&&\\
            \frac{11}{15}&\frac{3}{5}&0&\frac{8}{15}&\frac{8}{15}&\frac{11}{15}\\
             &&&&&\\
            \frac{3}{5}&\frac{11}{15}&\frac{8}{15}&0&\frac{8}{15}&\frac{11}{15}\\
             &&&&&\\
            \frac{11}{15}&\frac{11}{15}&\frac{8}{15}&\frac{8}{15}&0&\frac{3}{5}\\
             &&&&&\\
            \frac{8}{15}&\frac{8}{15}&\frac{11}{15}&\frac{11}{15}&\frac{3}{5}&0
        \end{bmatrix}
        \)};
        \end{tikzpicture} 
    \caption{Resistance regular graph on $6$ vertices with matrix representation}
    \label{fig7.11}
\end{figure}
All the regular graphs need not be resistance regular, for example one can look into the graph in Figure \ref{fig7.1}.

    \begin{figure}[H]
    \centering
    \begin{tabular}{cc}
        \begin{tikzpicture}[scale=0.6,inner sep=1.2pt]
            \draw (0,0) node(3) [circle,draw,fill] {};
            \draw (0,3) node(7) [circle,draw,fill] {};
            \draw (0,6) node(1) [circle,draw,fill] {};
            \draw (-1,2) node(8) [circle,draw,fill] {};
            \draw (1,2) node(9) [circle,draw,fill] {};
            \draw (-1,4) node(5) [circle,draw,fill] {};
            \draw (1,4) node(6) [circle,draw,fill] {};
            \draw (-3,3) node(2) [circle,draw,fill] {};
            \draw (3,3) node(4) [circle,draw,fill] {};
            \node at (0,-1.5) {$G$};
            \node at (13,3) {\(
        R(G)= \begin{bmatrix}
            0&\frac{41}{90}&\frac{3}{5}&\frac{41}{90}&\frac{19}{45}&\frac{19}{45}&\frac{17}{30}&\frac{28}{45}&\frac{28}{45}\\
             &&&&&&&&\\
            \frac{41}{90}&0&\frac{41}{90}&\frac{5}{9}&\frac{41}{90}&\frac{17}{30}&\frac{5}{9}&\frac{41}{90}&\frac{17}{30}\\
            &&&&&&&&\\
            \frac{3}{5}&\frac{41}{90}&0&\frac{41}{90}&\frac{28}{45}&\frac{28}{45}&\frac{17}{30}&\frac{19}{45}&\frac{19}{45}\\
            &&&&&&&&\\
            \frac{41}{90}&\frac{5}{9}&\frac{41}{90}&0&\frac{17}{30}&\frac{41}{90}&\frac{5}{9}&\frac{17}{30}&\frac{41}{90}\\
            &&&&&&&&\\
            \frac{19}{45}&\frac{41}{90}&\frac{28}{45}&\frac{17}{30}&0&\frac{19}{45}&\frac{41}{90}&\frac{3}{5}&\frac{28}{45}\\
            &&&&&&&&\\
            \frac{19}{45}&\frac{17}{30}&\frac{28}{45}&\frac{41}{90}&\frac{19}{45}&0&\frac{41}{90}&\frac{28}{45}&\frac{3}{5}\\
            &&&&&&&&\\
            \frac{17}{30}&\frac{5}{9}&\frac{17}{30}&\frac{5}{9}&\frac{41}{90}&\frac{41}{90}&0&\frac{41}{90}&\frac{41}{90}\\
            &&&&&&&&\\
            \frac{28}{45}&\frac{41}{90}&\frac{19}{45}&\frac{17}{30}&\frac{3}{5}&\frac{28}{45}&\frac{41}{90}&0&\frac{19}{45}\\
            &&&&&&&&\\
            \frac{28}{45}&\frac{17}{30}&\frac{19}{45}&\frac{41}{90}&\frac{28}{45}&\frac{3}{5}&\frac{41}{90}&\frac{19}{45}&0
        \end{bmatrix}.
        \)};
            \draw[-] (6) to (1) to (2) to (3) to (4) to (1) to (5) to (6) to (7) to (8) to (9) to (3) to (8) to (2) to (5) to (6) to (4) to (9) to (7) to (5);
        \end{tikzpicture} &
        
    \end{tabular}

    \caption{Regular and non-resistance regular graph (row sums of 
$R(G)$ are not all equal) with matrix representation}
    \label{fig7.1}
\end{figure}
\begin{proposition}
    The cocktail party graph $CP(n)$ of order $n=2p$ is resistance regular, where $p\in \mathbb{N}$.
\end{proposition}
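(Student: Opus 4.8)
The plan is to invoke the immediately preceding proposition, which guarantees that \emph{any regular graph with at most four distinct adjacency eigenvalues is resistance regular}. It therefore suffices to verify two facts about $CP(n)=K_{2,\ldots,2}$ (with $p=n/2$ parts, each of size $2$): that it is regular, and that its adjacency matrix has at most four distinct $A$-eigenvalues. Both are standard structural features of the cocktail party graph, so I expect no genuine obstacle; the only point requiring a little care is the eigenvalue/multiplicity bookkeeping in the diagonalization step.

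First I would record that $CP(n)$ is obtained from $K_n$ by deleting a perfect matching, so that every vertex is adjacent to all others except its unique partner. Hence $CP(n)$ is $(n-2)$-regular. Writing $M$ for the $0$--$1$ permutation matrix that swaps each vertex with its partner, we have $A\bigl(CP(n)\bigr)=J-I-M$, where $J$ is the all-ones matrix and $I$ the identity of order $n$.

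Next I would diagonalize $A$ by exhibiting three mutually orthogonal eigenspaces. The all-ones vector $\mathbf{1}$ yields the eigenvalue $n-2$ with multiplicity one. Each vector that is $+1$ on one endpoint of a matching edge, $-1$ on the other, and $0$ elsewhere satisfies $J\mathbf{x}=\mathbf{0}$ and $M\mathbf{x}=-\mathbf{x}$, hence $A\mathbf{x}=\mathbf{0}$; these $p$ vectors (supported on disjoint pairs) span a $p$-dimensional eigenspace for the eigenvalue $0$. Finally, each vector that is constant on every matching pair and has total sum zero satisfies $J\mathbf{y}=\mathbf{0}$ and $M\mathbf{y}=\mathbf{y}$, hence $A\mathbf{y}=-2\mathbf{y}$; these span a $(p-1)$-dimensional eigenspace for the eigenvalue $-2$. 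Since $1+p+(p-1)=2p=n$, these three orthogonal subspaces exhaust $\mathbb{R}^n$, so $A$ has exactly the three distinct eigenvalues $n-2$, $0$, and $-2$.

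With regularity and (exactly three) distinct adjacency eigenvalues established, the preceding proposition applies directly and shows that $CP(n)$ is resistance regular, completing the argument. As an alternative route that bypasses the multiplicity count entirely, I would note that $CP(n)$ is strongly regular, equivalently vertex-transitive and hence walk-regular; since walk-regular graphs are resistance regular, this again yields the conclusion.
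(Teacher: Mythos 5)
Your proof is correct, but it takes a genuinely different route from the paper's. The paper proceeds by brute force: it writes $L(CP(n))=2(p-1)I_n+(I_p-J_p)\otimes J_2$, exhibits an explicit candidate for the Moore--Penrose inverse, verifies the four defining identities, reads off that all diagonal entries $l^{\dagger}_{ii}$ coincide, and concludes via Proposition~\ref{resreg}. You instead observe that $A(CP(n))=J-I-M$ for the partner-swapping involution $M$, diagonalize it into the three orthogonal eigenspaces for $n-2$, $0$, and $-2$ (your multiplicity count $1+p+(p-1)=n$ is right), and then invoke the paper's own preceding proposition that a regular graph with at most four distinct adjacency eigenvalues is walk-regular and hence resistance regular. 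Your argument is shorter and conceptually cleaner, and it makes visible \emph{why} the result holds (three eigenvalues, i.e.\ strong regularity); the paper's computation buys something you do not get, namely the explicit $L^{\dagger}$ and the common diagonal value, which can then be fed into resistance-distance and Kirchhoff-index formulas. Two minor points: in your closing remark, ``strongly regular, equivalently vertex-transitive'' is not an equivalence --- neither property implies the other in general, though $CP(n)$ happens to enjoy both, and either one separately gives walk-regularity; and both your argument and the paper's implicitly require $p\geq 2$, since for $p=1$ the graph is disconnected (and the paper's formula divides by $p-1$).
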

\begin{proof}
    By a proper labelling of vertices in $CP(n)$ its Laplacian matrix can be written as,
    $$L(CP(n))= 2(p-1)I_n+(I_p-J_p)\otimes J_2.$$
    Let $X= -\frac{1}{4p(p-1)}(I_p\otimes J_2)+\frac{1}{n-2}I_n-\frac{1}{n^2}J_n.$ By computation, we have
    \begin{equation*}
        \begin{aligned}
         \left(2(p-1)I_n+(I_p-J_p)\otimes J_2\right)X\left(2(p-1)I_n+(I_p-J_p)\otimes J_2\right)&= 2(p-1)I_n+(I_p-J_p)\otimes J_2,\\
         \end{aligned}
    \end{equation*}
    \begin{equation*}
        \begin{aligned}
         X\left(2(p-1)I_n+(I_p-J_p)\otimes J_2\right)X&= X,
         \end{aligned}
    \end{equation*}
    \begin{equation*}
        \begin{aligned}
         \left(2(p-1)I_n+(I_p-J_p)\otimes J_2\right)X= I_n-\frac{1}{n}J_n= X\left(2(p-1)I_n+(I_p-J_p)\otimes J_2\right).
        \end{aligned}
    \end{equation*}
    Therefore, $$L^{\dagger}(CP(n))= -\frac{1}{4p(p-1)}(I_p\otimes J_2)+\frac{1}{n-2}I_n-\frac{1}{n^2}J_n.$$
    We can see that $$l^{\dagger}_{11}= \cdots= l^{\dagger}_{nn}= \frac{n^2(2p-3)+4(n-1)-2p(n-2)}{2n^2(n-2)(p-1)}.$$\\ Then by Proposition \ref{resreg}, $CP(n)$ is resistance regular.
\end{proof}
Next theorem provides a necessary and sufficient condition for a graph to be a resistance regular graph.
\begin{theorem}
    Let $G$ be a graph with $n$ vertices, then $G$ is resistance regular if and only if\\ $\frac{1}{\mathrm{deg}_G(v_i)}\left(\sum\limits_{v_j\in N(v_i)}\left(\sum_{k=1}^{n-1}\frac{1}{\gamma_k}s_{k_i}s_{kj}\right)+1-\frac{1}{n}\right)$ is same for every $i\in \{1,2,\ldots,n\},$ where $s_{ij}$'s are the components of the eigenvector $S_i$ of the eigenvalue $\gamma_i$ of $L(G).$ 
\end{theorem}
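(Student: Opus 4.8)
The plan is to show that the bracketed quantity in the statement, after dividing by $\deg_G(v_i)$, is nothing but the diagonal entry $l_{ii}^{\dagger}$ of the Moore--Penrose inverse of the Laplacian; the desired equivalence then follows at once from Proposition \ref{resreg}.

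First I would identify the inner double sum. Since $L$ is symmetric with kernel spanned by the all-ones vector, its Moore--Penrose inverse admits the spectral decomposition $L^{\dagger}=\sum_{k=1}^{n-1}\frac{1}{\gamma_k}S_kS_k^{T}$, so that $l_{ij}^{\dagger}=\sum_{k=1}^{n-1}\frac{1}{\gamma_k}s_{ki}s_{kj}$ for all $i,j$ (equivalently, combine Theorem \ref{evectorformX}(iv) with the identity $L^{\dagger}=X-\frac{1}{n}J$). Hence the quantity in the statement is exactly
$$\frac{1}{\deg_G(v_i)}\left(\sum_{v_j\in N(v_i)}l_{ij}^{\dagger}+1-\frac{1}{n}\right).$$

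The key step is to evaluate the neighbourhood sum $\sum_{v_j\in N(v_i)}l_{ij}^{\dagger}$ using Lemma \ref{lldagger}. I would read off the $(i,i)$ entry of the identity $LL^{\dagger}=I-\frac{1}{n}J$. On the left, the $i$-th row of $L$ carries $\deg_G(v_i)$ on the diagonal and $-1$ in precisely the columns indexed by $N(v_i)$, so
$$(LL^{\dagger})_{ii}=\deg_G(v_i)\,l_{ii}^{\dagger}-\sum_{v_j\in N(v_i)}l_{ij}^{\dagger},$$
while the right-hand side gives $(I-\tfrac{1}{n}J)_{ii}=1-\frac{1}{n}$. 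Rearranging yields $\sum_{v_j\in N(v_i)}l_{ij}^{\dagger}=\deg_G(v_i)\,l_{ii}^{\dagger}-1+\frac{1}{n}$.

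Substituting this back into the displayed expression, the terms $\pm\left(1-\frac{1}{n}\right)$ cancel and the factor $\deg_G(v_i)$ divides out, leaving precisely $l_{ii}^{\dagger}$. Thus the stated quantity equals $l_{ii}^{\dagger}$ for every $i$, so it is independent of $i$ if and only if $l_{11}^{\dagger}=\cdots=l_{nn}^{\dagger}$; by Proposition \ref{resreg} this is equivalent to $G$ being resistance regular, which completes the proof. There is no genuine analytic obstacle here---the only points requiring care are the identification of the inner double sum with $l_{ij}^{\dagger}$ and the correct bookkeeping of signs and the neighbourhood index set when expanding the diagonal entry of $LL^{\dagger}$.
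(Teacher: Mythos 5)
Your proposal is correct and follows essentially the same route as the paper: it invokes Proposition \ref{resreg} to reduce resistance regularity to the equality of the diagonal entries $l_{ii}^{\dagger}$, uses the $(i,i)$ entry of the identity $LL^{\dagger}=I-\frac{1}{n}J$ from Lemma \ref{lldagger} to express $l_{ii}^{\dagger}$ via the neighbourhood sum, and identifies $l_{ij}^{\dagger}=\sum_{k=1}^{n-1}\frac{1}{\gamma_k}s_{ki}s_{kj}$ through Theorem \ref{evectorformX}. The only cosmetic difference is the direction of substitution (you solve for the neighbourhood sum and cancel, while the paper solves for $l_{ii}^{\dagger}$ directly), which changes nothing of substance.
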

\begin{proof}
    Consider $L^{\dagger}= (l^{\dagger}_{ij})= (L+\frac{1}{n}J)^{-1}-\frac{1}{n}J.$ From Proposition \ref{resreg}, we have the graph $G$ is resistance regular if and only if $l^{\dagger}_{11}=l^{\dagger}_{22}=\cdots=l^{\dagger}_{nn}.$\\
    From Lemma \ref{lldagger}, $LL^{\dagger}= I-\frac{1}{n}J,$ and so \begin{equation*}
        \begin{aligned}
            \mathrm{deg}_G(v_i)l_{ii}^{\dagger}-\sum\limits_{v_j\in N(v_i)}l_{ij}^{\dagger}&= 1-\frac{1}{n}\\
            l_{ii}^{\dagger}&= \frac{\sum\limits_{v_j\in N(v_i)}l_{ij}^{\dagger}+1-\frac{1}{n}}{\mathrm{deg}_G(v_i)}
        \end{aligned}
    \end{equation*}
    That is, $G$ is resistance regular if and only if \begin{equation}\label{eqn7.111}
        \frac{\sum\limits_{v_j\in N(v_1)}l^{\dagger}_{1j}+1-1/n}{\mathrm{deg}_G(v_1)}=\cdots= \frac{\sum\limits_{v_j\in N(v_n)}l^{\dagger}_{nj}+1-1/n}{\mathrm{deg}_G(v_n)}.
    \end{equation}\\
    Now by Theorem \ref{evectorformX}, the equation (\ref{eqn7.111}) holds if and only if
     $$\frac{1}{\mathrm{deg}_G(v_1)}\left(\sum_{{\substack{v_j\in N(v_1)}}}\left(\sum_{k=1}^{n-1}\frac{1}{\gamma_k}s_{k1}s_{kj}\right)+1-\frac{1}{n}\right)= \cdots=\frac{1}{\mathrm{deg}_G(v_n)}\left(\sum_{v_j\in N(v_n)}\left(\sum_{k=1}^{n-1}\frac{1}{\gamma_k}s_{kn}s_{kj}\right)+1-\frac{1}{n}\right).$$
     
\end{proof}

\section{Resistance Energy}
In this section we determine the $R$-energy of some standard resistance regular graphs.
Further, establishes sharp bounds for the resistance spectral radius, and presents various
bounds for the resistance energy of graphs.
Note that $R(G)$ has a single positive eigenvalue and $n-1$ negative eigenvalues. Then we have,
$$E_R(G)= 2\rho_1(G).$$\\
The following propositions are direct consequence of Theorem \ref{twobds} and Proposition \ref{kf}.
\begin{proposition}
    For a graph $G$ with $n$ vertices, 
    $$E_R(G)\geq 2\sqrt{\frac{1}{n}\sum_{i=1}^{n}R_i^2}.$$
  
Equality holds if and only if $R(G)$ is resistance regular.
\end{proposition}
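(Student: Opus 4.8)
The plan is to recognize this bound as an immediate corollary obtained by combining the identity $E_R(G)=2\rho_1(G)$ with the lower estimate for the resistance spectral radius supplied by Theorem~\ref{twobds}. First I would pin down the spectral structure of $R(G)$: since every diagonal entry $r_{ii}$ vanishes, the trace of $R(G)$ is zero, so $\sum_{i=1}^n \rho_i(G)=0$. Using the already-noted fact that $R(G)$ has exactly one positive eigenvalue $\rho_1(G)$ and $n-1$ negative eigenvalues, the sum of the negative eigenvalues equals $-\rho_1(G)$, whence $E_R(G)=\sum_{i=1}^n|\rho_i(G)|=\rho_1(G)+\rho_1(G)=2\rho_1(G)$.

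With this identity in hand, the second step is a direct substitution. Theorem~\ref{twobds} gives the left-hand lower bound $\rho_1(G)\geq \sqrt{\tfrac{1}{n}\sum_{i=1}^n R_i^2}$, so multiplying through by $2$ yields
$$E_R(G)=2\rho_1(G)\geq 2\sqrt{\frac{1}{n}\sum_{i=1}^n R_i^2},$$
which is precisely the claimed inequality.

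Finally, the equality characterization transfers verbatim: Theorem~\ref{twobds} asserts that its left-hand inequality is tight exactly when $G$ is resistance regular, and since $E_R(G)=2\rho_1(G)$ is an exact identity (not an estimate), equality in the energy bound holds under the very same condition. I do not anticipate any genuine obstacle here; the only point requiring a word of care is the justification of $E_R(G)=2\rho_1(G)$, which rests on the vanishing trace together with the stated sign pattern of the resistance spectrum. Once that identity is fixed, the remainder is a one-line application of the preceding theorem, consistent with the remark that these propositions are direct consequences of Theorem~\ref{twobds} and Proposition~\ref{kf}.
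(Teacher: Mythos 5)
Your proposal is correct and matches the paper's route exactly: the paper presents this proposition as a direct consequence of the identity $E_R(G)=2\rho_1(G)$ (which it justifies by the same observation that $R(G)$ has one positive and $n-1$ negative eigenvalues) combined with the lower bound of Theorem~\ref{twobds}, including the transfer of the equality condition. No discrepancies to report.
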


\begin{proposition}
For a graph $G$,    $E_R(G)\geq \frac{4\mathcal{K}f(G)}{n}$ equality holds if and only if $G$ is resistance regular.
\end{proposition}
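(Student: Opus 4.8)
The plan is to combine the spectral identity $E_R(G)=2\rho_1(G)$ recorded immediately above with the lower bound for the resistance spectral radius in Proposition \ref{kf}; the statement is then essentially a one-line corollary. First I would make explicit why $E_R(G)=2\rho_1(G)$. Since every diagonal entry satisfies $r_{ii}=0$, the trace of $R(G)$ vanishes, so the sum of all $R$-eigenvalues is zero. Because $R(G)$ has exactly one positive eigenvalue $\rho_1(G)$ and its remaining $n-1$ eigenvalues are negative, the sum of the absolute values of those negative eigenvalues equals $\rho_1(G)$, and hence $E_R(G)=\sum_{i=1}^{n}|\rho_i(G)|=\rho_1(G)+\rho_1(G)=2\rho_1(G)$.

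Next I would simply invoke Proposition \ref{kf}, which gives $\rho_1(G)\geq \frac{2\mathcal{K}f(G)}{n}$. Substituting this into the identity yields $E_R(G)=2\rho_1(G)\geq \frac{4\mathcal{K}f(G)}{n}$, which is the claimed inequality. No further estimate is needed, so the proof reduces to chaining these two observations.

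For the equality clause, I would observe that the only inequality used in the chain is the one supplied by Proposition \ref{kf}. Consequently, equality in $E_R(G)\geq \frac{4\mathcal{K}f(G)}{n}$ holds precisely when $\rho_1(G)=\frac{2\mathcal{K}f(G)}{n}$, and by the equality condition of Proposition \ref{kf} this happens if and only if $G$ is resistance regular. I do not anticipate any genuine obstacle here: the result is a direct corollary of the two facts assembled, and the only point deserving a word of care is the justification of the identity $E_R(G)=2\rho_1(G)$, which rests entirely on the sign pattern of the $R$-eigenvalues (one positive, the rest negative) together with the vanishing trace of $R(G)$.
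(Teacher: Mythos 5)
Your proposal is correct and matches the paper's own (implicit) argument: the paper states this proposition as a direct consequence of the identity $E_R(G)=2\rho_1(G)$ (recorded just above it, based on the sign pattern of the $R$-eigenvalues) combined with Proposition \ref{kf}. Your additional justification of the identity via the vanishing trace is a reasonable elaboration of what the paper leaves unstated.
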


\begin{corollary}\label{cor7.3.1}
     If $G$ is $k$-resistance regular, then $E_R(G)= 2k.$
Also $\mathcal{K}f(G)= \frac{nk}{2}.$

\end{corollary}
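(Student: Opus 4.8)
The plan is to pin down the spectral radius $\rho_1(G)$ exactly and then read off both assertions. Since $G$ is $k$-resistance regular, every row sum of $R(G)$ equals $k$, which means $R(G)\mathbf{1} = k\mathbf{1}$ for the all-ones vector $\mathbf{1}$; hence $k$ is an eigenvalue of $R(G)$ with eigenvector $\mathbf{1}$. Because $G$ is connected on $n \geq 2$ vertices, the resistance distance between any two distinct vertices is strictly positive, so $k = R_i = \sum_{j} r_{ij} > 0$.

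Next I would use the sign pattern of the resistance spectrum recorded just above, namely that $R(G)$ carries a single positive eigenvalue (its spectral radius) and $n-1$ negative ones. Since $k$ is a positive eigenvalue, it must coincide with the unique positive eigenvalue, giving $\rho_1(G) = k$. The identity $E_R(G) = 2\rho_1(G)$ then yields $E_R(G) = 2k$. Equivalently, one may invoke Theorem \ref{twobds}: for a resistance regular graph the lower bound is attained, so $\rho_1(G) = \sqrt{\tfrac{1}{n}\sum_{i=1}^n R_i^2} = \sqrt{\tfrac{1}{n}\cdot nk^2} = k$, using $k > 0$.

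For the Kirchhoff index I would simply sum the entries of $R(G)$. Since the diagonal vanishes, $\sum_{i,j} r_{ij} = 2\sum_{i<j} r_{ij} = 2\,\mathcal{K}f(G)$, while $\sum_{i,j} r_{ij} = \sum_{i} R_i = nk$; comparing the two gives $\mathcal{K}f(G) = \tfrac{nk}{2}$. Alternatively this drops out of Proposition \ref{kf}, whose equality case holds here because $G$ is resistance regular: substituting $\rho_1(G) = k$ into $\rho_1(G) = \tfrac{2\mathcal{K}f(G)}{n}$ returns the same value.

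The calculations are all routine; the only step that genuinely uses the structure of the problem is the passage to $\rho_1(G) = k$, which rests essentially on the stated fact that the resistance distance matrix of a connected graph has exactly one positive eigenvalue. Everything else is bookkeeping with the row sums, and the consistency between the two derivations of $\mathcal{K}f(G)$ serves as a useful internal check.
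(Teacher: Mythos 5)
Your proof is correct. The paper leaves this corollary without an explicit proof, intending it as the equality case of the two propositions immediately preceding it (which in turn come from Theorem \ref{twobds} and Proposition \ref{kf}): resistance regularity forces $E_R(G)=2\sqrt{\tfrac{1}{n}\sum_i R_i^2}=2k$ and $E_R(G)=\tfrac{4\mathcal{K}f(G)}{n}$, whence $\mathcal{K}f(G)=\tfrac{nk}{2}$. Your primary route is different and arguably cleaner: you identify $\mathbf{1}$ as an eigenvector of $R(G)$ with eigenvalue $k>0$, invoke the fact (stated in the paper just before these results) that $R(G)$ has exactly one positive eigenvalue to conclude $\rho_1(G)=k$, and then use $E_R(G)=2\rho_1(G)$. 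This avoids appealing to the equality characterizations entirely. Your Kirchhoff computation, $2\mathcal{K}f(G)=\sum_{i,j}r_{ij}=\sum_i R_i=nk$, is likewise more elementary than the paper's implicit route and makes transparent that this half of the statement is pure bookkeeping with row sums, needing no spectral input at all. Since you also record the equality-case derivations as alternatives, your write-up subsumes the paper's argument; the only mild redundancy is that once you have the direct summation for $\mathcal{K}f(G)$, the detour through Proposition \ref{kf} is unnecessary.
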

\begin{remark}
    All resistance regular graphs have the same Kirchhoff index.
\end{remark}
Using Corollary \ref{cor7.3.1}, the resistance energy for certain well-known graphs are obtained as follows.
\begin{corollary}
The resistance energy of some standard resistance regular graphs are given as follows:
    \begin{itemize}
        \item [(i)] If $G= K_n$, then $E_R(G)= \frac{4(n-1)}{n}.$
        \item[(ii)] If $G= C_n$, then $E_R(G)= \frac{n^2-1}{3}.$
        \item[(iii)] If $G= K_{n,n}$, then $E_R(G)= \frac{8n-6}{n}.$
    \end{itemize}
\end{corollary}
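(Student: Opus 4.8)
The unifying device is Corollary \ref{cor7.3.1}: once a graph is known to be $k$-resistance regular, its $R$-energy is simply $2k$ (and its Kirchhoff index is $nk/2$). So for each of the three families the strategy is identical: first confirm resistance regularity, then pin down the common resistance degree $k$, and finally read off $E_R(G)=2k$. Resistance regularity comes cheaply in every case, since $K_n$, $C_n$, and $K_{n,n}$ are all vertex-transitive (indeed distance-regular, hence walk-regular), so they are resistance regular by the facts quoted at the start of Section \ref{sec3}; alternatively each is regular with at most four distinct adjacency eigenvalues, so the first proposition of Section \ref{sec3} applies directly.

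For part (i) I would use the standard fact that in $K_n$ every effective resistance equals $r_{ij}=\tfrac{2}{n}$. Each row of $R(K_n)$ then sums to $k=(n-1)\cdot\tfrac{2}{n}=\tfrac{2(n-1)}{n}$, and Corollary \ref{cor7.3.1} yields $E_R(K_n)=2k=\tfrac{4(n-1)}{n}$. For part (ii) I would invoke Lemma \ref{rescycle}, $r_{ij}=\tfrac{(n-d_{ij})d_{ij}}{n}$. Fixing a vertex of $C_n$, the remaining vertices occupy directed distances $d=1,\ldots,n-1$, exactly one at each, so
\[
k=\sum_{d=1}^{n-1}\frac{d(n-d)}{n}=\frac{1}{n}\left(n\cdot\frac{(n-1)n}{2}-\frac{(n-1)n(2n-1)}{6}\right)=\frac{n^2-1}{6},
\]
whence $E_R(C_n)=2k=\tfrac{n^2-1}{3}$.

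For part (iii), the graph $K_{n,n}$ has $2n$ vertices and its effective resistance comes in two flavours (one value within a part, another across parts), so the cleanest route avoids computing both by hand and instead passes through the Kirchhoff index. Using the Laplacian spectrum of $K_{n,n}$, namely $0$ once, $n$ with multiplicity $2n-2$, and $2n$ once, the Kirchhoff index is
\[
\mathcal{K}f(K_{n,n})=2n\left(\frac{2n-2}{n}+\frac{1}{2n}\right)=4n-3.
\]
Since $K_{n,n}$ is resistance regular on $2n$ vertices, the relation $\mathcal{K}f(G)=\tfrac{(2n)k}{2}$ from Corollary \ref{cor7.3.1} gives $k=\tfrac{4n-3}{n}$, and therefore $E_R(K_{n,n})=2k=\tfrac{8n-6}{n}$. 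This last case is the only one requiring genuine work; the main obstacle is extracting $k$ for $K_{n,n}$, and the spectral shortcut just described sidesteps the need to derive the within-part and across-part resistance values separately (one can cross-check that those two values are $\tfrac{2}{n}$ and $\tfrac{2n-1}{n^2}$, which indeed reproduce the row sum $\tfrac{4n-3}{n}$).
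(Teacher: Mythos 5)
Your proposal is correct, and parts (i) and (ii) follow essentially the paper's route: the paper also writes $R(K_n)=\tfrac{2}{n}(J-I)$ and also uses Lemma \ref{rescycle} for the cycle, though it computes $R_i$ by splitting into the cases $n$ even and $n$ odd, whereas your observation that each vertex sits at directed distance $d=1,\dots,n-1$ exactly once and that $d(n-d)$ is invariant under $d\mapsto n-d$ gives the sum $\sum_{d=1}^{n-1}d(n-d)/n=\tfrac{n^2-1}{6}$ in one stroke — a genuine (if small) streamlining. Part (iii) is where you diverge substantially: the paper grinds through the Laplacian of $K_{n,n}$, applies Lemma \ref{L} to produce a $\{1\}$-inverse, computes $(nI_n-J_n)^{\dagger}=\tfrac{1}{n^2}(nI_n-J_n)$, and assembles the full matrix $R(K_{n,n})$ with blocks $\tfrac{2}{n}(J_n-I_n)$ and $\tfrac{2n-1}{n^2}J_n$ before reading off the row sum $\tfrac{4n-3}{n}$; you instead establish resistance regularity abstractly and extract $k$ from $\mathcal{K}f(K_{n,n})$ via the Laplacian spectrum. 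Your route is shorter and avoids generalized-inverse computations, but it leans on the identity $\mathcal{K}f(G)=n\sum_{i=1}^{n-1}\gamma_i^{-1}$, which is classical but is not among the paper's stated preliminaries, so strictly you are importing an external lemma where the paper is self-contained; the paper's computation also yields the full matrix $R(K_{n,n})$ as a byproduct, which has independent value. Two minor cautions: your fallback justification that all three graphs have ``at most four distinct adjacency eigenvalues'' fails for $C_n$ once $n$ is large (a cycle has roughly $n/2$ distinct eigenvalues), so you should rely only on the vertex-transitive/walk-regular argument there; and the equality condition in the first proposition of Section 4 is misstated in the paper itself (``$R(G)$ is resistance regular''), but that does not affect your argument.
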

\begin{proof}
    \begin{itemize}
        \item [(i)] Let $G= K_n$, then $R(G)= \frac{2}{n}(J-I.)$ That is, $K_n$ is $\frac{2(n-1)}{n}$-resistance regular.\\
        By
        Corollary \ref{cor7.3.1}, $E_R(G)= \frac{4(n-1)}{n}.$
        \item[(ii)] Let $G= C_n$. Then from Lemma \ref{rescycle}, for any $v_i, v_j \in V(G),\; r_{ij} = \frac{d_{ij}(n-d_{ij})}{n}.$\\
        If $n$ is even, then $G$ is $\frac{n^2}{4}$-transmission regular.\\
        Now, the $i^{th}$ resistance degree is given by
        \begin{equation*}
            \begin{aligned}
                R_i&= \sum_{j=1}^{n}r_{ij}\\
                &= \sum_{j=1}^{n}d_{ij}-\frac{1}{n}\sum_{j=1}^{n}d_{ij}^2\\
             &= \frac{n^2}{4}-\frac{1}{n}\left(2\left(1^2+2^2+\cdots+\left(\frac{n}{2}-1\right)^2\right)+\left(\frac{n}{2}\right)^2\right)\\
             &= \frac{n^2-1}{6}.
            \end{aligned}
        \end{equation*}
        Now, by Corollary \ref{cor7.3.1}, $E_R(G)= \frac{n^2-1}{3}.$\\
        If $n$ is odd, then $G$ is $\frac{n^2-1}{4}$-transmission regular.\\
        Therefore, the $i^{th}$ resistance degree is given by
        \begin{equation*}
            \begin{aligned}
                R_i&= \sum_{j=1}^{n}d_{ij}-\frac{1}{n}\sum_{j=1}^{n}d_{ij}^2\\
             &= \frac{n^2-1}{4}-\frac{2}{n}\left(1^2+2^2+\cdots+\left(\frac{n-1}{2}\right)^2\right)\\
             &= \frac{n^2-1}{6}.
            \end{aligned}
        \end{equation*}
        By Corollary \ref{cor7.3.1}, $E_R(G)= \frac{n^2-1}{3}.$
        \item[(iii)] Let $G= K_{n,n}.$ Then \[L(G)= \begin{bmatrix}
            nI_n & -J_n\\
            -J_n & nI_n
        \end{bmatrix}.\]
        By Lemma \ref{L},
    \[L^{(1)}(G)= \begin{bmatrix}
        \frac{1}{n}I_n& 0\\
        0& (nI_n-J_n)^{\dagger}
    \end{bmatrix}.\]
Consider $X=\frac{1}{n^2}(nI_n-J_n).$ Then\\
\begin{equation*}
    \begin{aligned}
        X(nI_n-J_n)X&= X,\\
        (nI_n-J_n)X(nI_n-J_n)&= nI_n-J_n,\\
        X(nI_n-J_n)&= \frac{1}{n}(nI_n-J_n)= X(nI_n-J_n).
    \end{aligned}
\end{equation*}
Therefore, $(nI_n-J_n)^{\dagger}= \frac{1}{n^2}(nI_n-J_n).$
\[L^{(1)}(G)= \begin{bmatrix}
    \frac{1}{n}I_n& 0\\
    0& \frac{1}{n^2}(nI_n-J_n)
\end{bmatrix}.\]
Now by the definition of resistance distance \[R(G)= \begin{bmatrix}
     \frac{2}{n}(J_n-I_n)& \frac{2n-1}{n^2}J_n\\
\frac{2n-1}{n^2}J_n&\frac{2}{n}(J_n-I_n)
\end{bmatrix}.\]
Then the $i^{th}$ resistance distance degree is given by $R_i= \frac{4n-3}{n}.$\\
Now by Corollary \ref{cor7.3.1}, $E_R(G)= \frac{8n-6}{n}.$
     \end{itemize}
\end{proof}
\begin{proposition}
Let $R_i$ and $T_i$ denote the resistance degrees and the second resistance degrees of $G$, respectively, then
    $$T_1+\cdots +T_n= R_1^2+\cdots +R_n^2.$$
\end{proposition}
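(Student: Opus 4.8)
The plan is to exploit the symmetry of the resistance distance matrix $R=(r_{ij})$, namely $r_{ij}=r_{ji}$, together with a reordering of the finite double summation. Starting from the definition of the second resistance degree, I would write
$$\sum_{i=1}^{n} T_i = \sum_{i=1}^{n}\sum_{j=1}^{n} r_{ij}R_j.$$
Since the summation is over a finite index set, I would interchange the order of summation and pull $R_j$ outside, collecting the $j$-th column sum of $R$:
$$\sum_{i=1}^{n} T_i = \sum_{j=1}^{n} R_j\left(\sum_{i=1}^{n} r_{ij}\right).$$

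The single fact that drives the argument is that $R$ is symmetric, so its $j$-th column sum equals its $j$-th row sum; that is, $\sum_{i=1}^{n} r_{ij}=\sum_{i=1}^{n} r_{ji}=R_j$. Substituting this identity gives
$$\sum_{i=1}^{n} T_i = \sum_{j=1}^{n} R_j\cdot R_j = \sum_{j=1}^{n} R_j^2,$$
which is exactly the claimed equality.

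Equivalently, and perhaps more transparently, I could cast the whole computation in matrix form. Let $\mathbf{1}$ denote the all-ones vector and set $\mathbf{R}=(R_1,\ldots,R_n)^T=R\mathbf{1}$, so that the vector of second resistance degrees is $R\mathbf{R}=R^2\mathbf{1}$. Then $\sum_i T_i=\mathbf{1}^T R^2\mathbf{1}$, whereas $\sum_i R_i^2=\mathbf{R}^T\mathbf{R}=(R\mathbf{1})^T(R\mathbf{1})=\mathbf{1}^T R^T R\mathbf{1}$, and the two agree precisely because $R^T=R$. There is no genuine obstacle here: the entire content of the statement is the symmetry $r_{ij}=r_{ji}$ of resistance distance. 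The only point requiring a little care is correctly identifying the inner sum $\sum_{i} r_{ij}$ as the column sum $R_j$ (valid by symmetry) rather than carelessly reading it off as $R_i$, which is the index being summed over.
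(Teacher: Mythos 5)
Your proof is correct and follows essentially the same route as the paper's: swap the order of the double sum, recognize the inner sum as the $j$-th column sum of $R$, and identify it with $R_j$ via the symmetry $r_{ij}=r_{ji}$. The matrix reformulation $\mathbf{1}^T R^2\mathbf{1}=(R\mathbf{1})^T(R\mathbf{1})$ is a clean restatement of the same computation, not a different argument.
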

\begin{proof}
    By definition $T_i= \sum_{j=1}^{n}r_{ij}R_j$.\\
    Now \begin{equation*}
        \begin{aligned}
            T_1+\cdots +T_n&=\sum_{j=1}^{n}r_{1j}R_j+\cdots+\sum_{j=1}^{n}r_{nj}R_j\\
            &=R_1(r_{11}+r_{21}+\cdots+r_{n1})+\cdots+R_n(r_{1n}+\cdots+r_{nn})\\
            &=R_1^2+\cdots+R_n^2.
        \end{aligned}
    \end{equation*}
\end{proof}
\begin{theorem}\label{ti}
  Let $R_i$ denote the resistance degrees of a graph $G$ for $i= 1,\ldots, n$, then  $$\rho_1(G)\geq \sqrt{\frac{T_1^2+\cdots +T_n^2}{R_1^2+\cdots+ R_n^2}}.$$
 Equality holds if and only if $G$ is pseudo resistance regular.
\end{theorem}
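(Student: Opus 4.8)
The plan is to read the claimed inequality as the operator-norm bound for the symmetric matrix $R=R(G)$, evaluated on the vector of resistance degrees. Set $\mathbf{r}=(R_1,\dots,R_n)^{T}$ and let $\mathbf{1}$ be the all-ones vector. Since $R_i=\sum_{j}r_{ij}$ we have $\mathbf{r}=R\mathbf{1}$, and because $T_i=\sum_j r_{ij}R_j$ the vector $\mathbf{t}=(T_1,\dots,T_n)^{T}$ equals $R\mathbf{r}$. As $R$ is real symmetric, its operator $2$-norm equals $\max_i|\lambda_i|$ over its eigenvalues $\lambda_i$; the standing fact that $R(G)$ has a single positive eigenvalue together with $\operatorname{tr}R=0$ shows that this maximal modulus is attained at the positive eigenvalue, so it equals $\rho_1(G)$. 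Hence $\|Rx\|_2\le\rho_1(G)\,\|x\|_2$ for every $x$, and taking $x=\mathbf{r}$ and dividing by $\|\mathbf{r}\|_2>0$ gives
\[
\rho_1(G)\ \ge\ \frac{\|R\mathbf{r}\|_2}{\|\mathbf{r}\|_2}=\sqrt{\frac{T_1^2+\cdots+T_n^2}{R_1^2+\cdots+R_n^2}},
\]
which is the asserted bound. Equivalently, expanding $\mathbf{r}=\sum_i c_i u_i$ in an orthonormal eigenbasis $u_1,\dots,u_n$ of $R$, this is just the estimate $\sum_i\lambda_i^2 c_i^2\le\rho_1(G)^2\sum_i c_i^2$.

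For the equality case I would track when this last inequality is tight: it holds exactly when $c_i=0$ for every $i$ with $|\lambda_i|<\rho_1(G)$, that is, when $\mathbf{r}$ lies in the span of the eigenvectors whose eigenvalue has maximal modulus. Here I would invoke Perron–Frobenius: since $G$ is connected, $r_{ij}>0$ for $i\neq j$, so $R$ is a nonnegative irreducible matrix whose spectral radius $\rho_1(G)$ is a simple eigenvalue with a strictly positive eigenvector $p$, unique up to scaling, while $\mathbf{r}$ is itself strictly positive. Moreover, for $n\ge 3$ the value $-\rho_1(G)$ cannot be an eigenvalue, since the $n-1$ negative eigenvalues sum to $-\rho_1(G)$ (as $\operatorname{tr}R=0$ and the only positive eigenvalue is $\rho_1(G)$), so if one of them equalled $-\rho_1(G)$ the remaining $n-2\ge 1$ strictly negative numbers would have to sum to $0$, which is impossible. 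Thus the maximal-modulus eigenspace is exactly the Perron line $\mathbb{R}p$, and equality forces $\mathbf{r}\in\mathbb{R}p$, i.e.\ $R\mathbf{r}=\rho_1(G)\mathbf{r}$, i.e.\ $T_i=\rho_1(G)R_i$ for all $i$: the graph is pseudo resistance regular. Conversely, if $T_i=kR_i$ for all $i$, then $R\mathbf{r}=k\mathbf{r}$ with $\mathbf{r}>0$, so Perron uniqueness gives $k=\rho_1(G)$ and $\|R\mathbf{r}\|_2=\rho_1(G)\|\mathbf{r}\|_2$, recovering equality; the remaining case $n=2$ is the graph $K_2$, which one checks directly to satisfy both equality and pseudo resistance regularity.

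The main obstacle is the equality analysis rather than the inequality itself: the bound is immediate once the operator-norm interpretation is recognized, but pinning down equality requires (i) the Perron–Frobenius structure of the nonnegative irreducible matrix $R$ to guarantee that its only positive eigenvector sits at $\rho_1(G)$, and (ii) the short trace argument ruling out $-\rho_1(G)$ as an eigenvalue, so that membership in the maximal-modulus eigenspace genuinely forces $\mathbf{r}$ to be an eigenvector. Both ingredients rely on the paper's standing observation that $R(G)$ has exactly one positive and $n-1$ negative eigenvalues.
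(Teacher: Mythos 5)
Your argument is correct and is essentially the paper's own proof: both bound the Rayleigh quotient of $R^2$ at the normalized vector of resistance degrees by its value at the Perron eigenvector, i.e.\ $\lVert R\mathbf{r}\rVert_2\le\rho_1(G)\lVert\mathbf{r}\rVert_2$, and both characterize equality via $R\mathbf{r}=\rho_1(G)\mathbf{r}$. Your treatment of the equality case is in fact more careful than the paper's, which simply asserts that equality makes $U$ an eigenvector for $\rho_1(G)$; your trace argument ruling out $-\rho_1(G)$ as an eigenvalue (plus the separate check for $n=2$) closes the gap between ``$\mathbf{r}$ lies in the maximal-modulus eigenspace of $R^2$'' and ``$\mathbf{r}$ is a Perron eigenvector of $R$.''
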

\begin{proof}
Let $Z=\left(z_1,\ldots,z_n\right)^T$ be the unit positive Perron eigenvector of $R(G)$ corresponding to $\rho_1(G).$\\

    Take $$U= \frac{1}{\sqrt{\sum_{i=1}^{n}R_i^2}}\left(R_1, \ldots, R_n\right)^T.$$\\
    Then $U$ is a unit positive vector.\\
  We have, $$\rho_1(G)= \sqrt{\rho_1(G)^2}=\sqrt{Z^TR^2Z}\geq \sqrt{U^TR^2U}.$$\\
    \begin{equation*}
    \begin{aligned}
        RU&= [r_{ij}]_{n\times n}\frac{1}{\sqrt{\sum_{i=1}^{n}R_i^2}}\left(R_1, R_2, \ldots, R_n\right)^T\\&= \frac{1}{\sqrt{\sum_{i=1}^{n}R_i^2}}\left(T_1, T_2, \ldots, T_n\right)^T,\\
       U^TR^2U= (RU)^T(RU)&= \frac{T_1^2+\cdots +T_n^2}{R_1^2+\ldots+ R_n^2}, 
        \end{aligned}
    \end{equation*}
    then $$\rho_1(G)\geq \sqrt{\frac{T_1^2+\cdots +T_n^2}{R_1^2+\cdots+ R_n^2}}.$$
    Now assume that $G$ is pseudo resistance regular. So $\frac{T_i}{R_i}
 $ is a constant (say $l$) for all $i$. Then $RU= lU$, showing that $U$ is an eigenvector corresponding to $l$ and hence $\rho_1(G)= l$. Thus, the equality holds.
 Conversely if equality holds then, we get $U$ is the eigenvector corresponding to $\rho_1(G)$ and that $RU= \rho_1(G)U$. This implies that $\frac{T_i}{R_i}
 = \rho_1(G)$ or in other words $G$ is
 pseudo resistance regular. 
 \end{proof}
We present some upper and lower bounds for the resistance energy of a graph $G$. The subsequent corollary is an immediate consequence of Theorem \ref{ti}.

\begin{corollary}
    For a graph $G$,
    $$E_R(G)\geq 2\sqrt{\frac{T_1^2+\cdots +T_n^2}{R_1^2+\cdots+ R_n^2}}.$$
Equality holds if and only if $G$ is pseudo resistance regular.
\end{corollary}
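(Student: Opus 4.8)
The plan is to reduce the claim directly to Theorem \ref{ti} by invoking the spectral structure of $R(G)$ recorded at the start of this section. Because $R(G)$ possesses exactly one positive eigenvalue and $n-1$ negative eigenvalues, the sum of absolute values of the $R$-eigenvalues collapses to twice the spectral radius, that is $E_R(G) = 2\rho_1(G)$. This identity is the only structural input required, and everything else is a single scaling.

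First I would invoke Theorem \ref{ti}, which asserts
$$\rho_1(G) \geq \sqrt{\frac{T_1^2 + \cdots + T_n^2}{R_1^2 + \cdots + R_n^2}},$$
with equality precisely when $G$ is pseudo resistance regular. Multiplying both sides by the positive constant $2$ and substituting $E_R(G) = 2\rho_1(G)$ yields the desired bound
$$E_R(G) \geq 2\sqrt{\frac{T_1^2 + \cdots + T_n^2}{R_1^2 + \cdots + R_n^2}}.$$

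For the equality statement, I would observe that since the multiplicative factor $2$ is strictly positive, the displayed inequality is saturated if and only if the inequality of Theorem \ref{ti} is saturated. Hence equality holds exactly when $G$ is pseudo resistance regular, and no further case analysis is needed.

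There is essentially no obstacle here: the entire content of the corollary is carried by Theorem \ref{ti}, and the passage from $\rho_1(G)$ to $E_R(G)$ is a single multiplication. The only point meriting care is the justification of $E_R(G) = 2\rho_1(G)$: since every diagonal entry $r_{ii}$ of $R(G)$ vanishes, the trace of $R(G)$ is zero, so the $R$-eigenvalues sum to zero; combined with the sign pattern (one positive eigenvalue, $n-1$ negative), this forces the negative eigenvalues to contribute exactly $\rho_1(G)$ in absolute value, giving $E_R(G) = \rho_1(G) + \rho_1(G) = 2\rho_1(G)$. Once that identity is granted, the corollary follows immediately.
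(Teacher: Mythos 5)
Your proposal is correct and matches the paper's intent exactly: the paper states this corollary as an immediate consequence of Theorem \ref{ti} combined with the identity $E_R(G)=2\rho_1(G)$ recorded at the start of Section 4, which is precisely your argument. Your extra justification of $E_R(G)=2\rho_1(G)$ via the zero trace and the sign pattern of the $R$-eigenvalues is a welcome clarification but not a departure from the paper's route.
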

\noindent Consider the $R$-eigenvalues of $G$, $\rho_1(G), \ldots, \rho_n(G)$, then $\sum_{i=1}^{n}\rho_i(G)= 0.$ Let $S(G)= \sum_{i=1}^{n}\rho_i(G)^2= \sum_{i=1}^{n}\sum_{j=1}^{n}r_{ij}^2.$
\begin{theorem}
    For a graph $G,$
    \begin{equation}\label{4.2ineq}
        E_R(G)\leq \alpha(G)+\sqrt{(n-1)\left(S(G)-\alpha(G)\right)},
    \end{equation}
    where $\alpha(G)= \sqrt{\frac{\sum_{i=1}^{n}T_i^2}{\sum_{i=1}^{n}R_i^2}}.$
    Equality holds if and only if $G$ is a complete graph.
\end{theorem}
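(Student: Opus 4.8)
The plan is to follow the Koolen--Moulton strategy for bounding graph energy. Recall from the remark preceding this section that $R(G)$ has exactly one positive eigenvalue $\rho_1(G)$ together with $n-1$ negative eigenvalues, and that $\sum_{i=1}^{n}\rho_i(G)=0$; consequently $\sum_{i=2}^{n}|\rho_i(G)|=\rho_1(G)$ and $E_R(G)=\rho_1(G)+\sum_{i=2}^{n}|\rho_i(G)|$. First I would apply the Cauchy--Schwarz inequality to the $n-1$ quantities $|\rho_2(G)|,\dots,|\rho_n(G)|$, using $\sum_{i=2}^{n}\rho_i(G)^2=S(G)-\rho_1(G)^2$, to obtain
\[
E_R(G)\le \rho_1(G)+\sqrt{(n-1)\bigl(S(G)-\rho_1(G)^2\bigr)}.
\]

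Next I would introduce the function $f(x)=x+\sqrt{(n-1)\bigl(S(G)-x^2\bigr)}$ on $[0,\sqrt{S(G)}]$ and compute $f'(x)=1-\frac{\sqrt{n-1}\,x}{\sqrt{S(G)-x^2}}$, which shows that $f$ is strictly decreasing on the interval $[\sqrt{S(G)/n},\sqrt{S(G)}]$. The right-hand side above is exactly $f(\rho_1(G))$. By Theorem \ref{ti} we have $\rho_1(G)\ge \alpha(G)$, so provided one can establish that $\alpha(G)\ge\sqrt{S(G)/n}$, both $\alpha(G)$ and $\rho_1(G)$ lie in the decreasing range of $f$, and $\rho_1(G)\ge\alpha(G)$ then forces $f(\rho_1(G))\le f(\alpha(G))=\alpha(G)+\sqrt{(n-1)\bigl(S(G)-\alpha(G)^2\bigr)}$ (note that the radicand in \eqref{4.2ineq} is $S(G)-\alpha(G)^2$), which is the asserted bound.

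For the equality characterization, equality in \eqref{4.2ineq} forces equality in every step of the chain $E_R(G)\le f(\rho_1(G))\le f(\alpha(G))$, in particular in the Cauchy--Schwarz step. Equality there yields $|\rho_2(G)|=\cdots=|\rho_n(G)|$; since these eigenvalues share the same sign they are in fact equal, so $R(G)$ has precisely two distinct eigenvalues. By Theorem \ref{distinct} this occurs exactly when $G$ is complete. Conversely, for $G=K_n$ the graph is resistance regular, whence $\rho_1(G)=\alpha(G)$ (making the monotonicity step an equality as well); the resistance spectrum consists of $\frac{2(n-1)}{n}$ once and $-\frac{2}{n}$ with multiplicity $n-1$, and a direct substitution shows both sides of \eqref{4.2ineq} equal $\frac{4(n-1)}{n}=E_R(K_n)$.

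The main obstacle is the inequality $\alpha(G)\ge\sqrt{S(G)/n}$, needed so that substituting the lower bound $\alpha(G)$ for $\rho_1(G)$ occurs within the decreasing branch of $f$; outside that branch, replacing $\rho_1(G)$ by a smaller number need not increase $f$. Writing $\mathbf{R}=(R_1,\dots,R_n)^T=R(G)\mathbf{1}$ and $\mathbf{T}=(T_1,\dots,T_n)^T=R(G)^2\mathbf{1}$, the required inequality reads $\frac{\mathbf{R}^T R(G)^2\mathbf{R}}{\mathbf{R}^T\mathbf{R}}\ge \frac{1}{n}\operatorname{tr}\bigl(R(G)^2\bigr)$, i.e.\ the Rayleigh quotient of $R(G)^2$ at the positive resistance-degree vector $\mathbf{R}$ must be at least the average eigenvalue of $R(G)^2$. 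I expect this to follow from the Perron--Frobenius structure of $R(G)$: since the largest eigenvalue of $R(G)^2$ is $\rho_1(G)^2$ and $\mathbf{R}=R(G)\mathbf{1}$ has its component along the positive Perron eigenvector amplified by the factor $\rho_1(G)$, the weighted average defining $\alpha(G)^2$ is pulled toward this maximal value. Making this quantitative is the step I anticipate will require the most care.
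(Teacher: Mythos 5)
Your overall strategy coincides with the paper's: the Cauchy--Schwarz step, the decreasing function $f(x)=x+\sqrt{(n-1)(S(G)-x^2)}$ on $[\sqrt{S(G)/n},\sqrt{S(G)}]$, the substitution of the lower bound $\alpha(G)\le\rho_1(G)$ from Theorem \ref{ti}, and the equality analysis via Theorem \ref{distinct} are exactly what the paper does (and you are right that the radicand should be read as $S(G)-\alpha(G)^2$). However, you have left the one genuinely nontrivial step --- the inequality $\alpha(G)\ge\sqrt{S(G)/n}$ that licenses replacing $\rho_1(G)$ by $\alpha(G)$ inside the decreasing branch of $f$ --- unproven, offering only a Perron--Frobenius heuristic and explicitly deferring the quantitative argument. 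As it stands this is a real gap: for a general symmetric matrix the Rayleigh quotient of $R(G)^2$ at a positive vector can fall below the mean eigenvalue of $R(G)^2$, so some concrete property of $\mathbf{R}=R(G)\mathbf{1}$ must be invoked, and ``the Perron component is amplified'' is not by itself an argument.

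The missing step closes with nothing more than nonnegativity of the entries of $R(G)$ and Cauchy--Schwarz; no Perron--Frobenius input is needed. The paper argues via the two bounds $\sum_{i=1}^{n}R_i^2\le nS(G)$ (Cauchy--Schwarz applied to each row sum $R_i=\sum_j r_{ij}$) and $\sum_{i=1}^{n}T_i^2\ge S(G)^2$, which together give $\alpha(G)^2\ge S(G)^2/(nS(G))=S(G)/n$. Alternatively, and perhaps more transparently, using the paper's identity $\sum_i T_i=\sum_i R_i^2$ one has
\[
\alpha(G)^2=\frac{\sum_i T_i^2}{\sum_i R_i^2}\ \ge\ \frac{\bigl(\sum_i T_i\bigr)^2}{n\sum_i R_i^2}=\frac{\sum_i R_i^2}{n}\ \ge\ \frac{1}{n}\sum_i\sum_j r_{ij}^2=\frac{S(G)}{n},
\]
where the last inequality is $\bigl(\sum_j r_{ij}\bigr)^2\ge\sum_j r_{ij}^2$, valid because $r_{ij}\ge 0$. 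With this supplied, the remainder of your argument (including the equality characterization) goes through as written and matches the paper's proof.
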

\begin{proof}
    Consider \begin{equation}\label{cauchyschwartz}
        \begin{aligned}
            (E_R(G)-\rho_1(G))^2= \left(\sum_{i=2}^{n}|\rho_i(G)|\right)^2&\leq (n-1)\sum_{i=2}^{n}(\rho_i(G))^2\\
            &= (n-1)(S(G)-(\rho_1(G))^2)
        \end{aligned}
    \end{equation}
    Thus, $E_R(G)\leq \rho_1(G)+\sqrt{(n-1)(S(G)-(\rho_1(G))^2)}.$\\

     Now consider the function $g(x)= x+ \sqrt{(n-1)(S(G)-x^2)}$ for $\frac{2Kf(G)}{n}\leq x \leq \sqrt{S(G)}.$\\
     The function $g(x)$ is monotonically decreasing for $x\geq \sqrt{\frac{S(G)}{n}}.$\\
     For any $i\in \{1,\ldots, n\}$, \\
     \begin{equation*}
         \begin{aligned}
             R_i^2&= \left(\sum_{j=1}^{n}r_{ij}\right)^2\leq n\sum_{j=1}^{n}r_{ij}^2\\
             \sum_{i=1}^{n}R_i^2&\leq n\sum_{i=1}^{n}\sum_{j=1}^{n}r_{ij}^2= nS(G).
         \end{aligned}
     \end{equation*}

\begin{equation*}
    \begin{aligned}
        \sum_{i=1}^{n}T_i^2= \sum_{i=1}^{n}\left(\sum_{j=1}^{n}r_{ij}R_j\right)^2\geq S(G)^2.
    \end{aligned}
\end{equation*}
Then by Theorem \ref{ti},\\
$$\rho_1(G)\geq \sqrt{\frac{\sum_{i=1}^{n}T_i^2}{\sum_{i=1}^{n}R_i^2}}\geq \sqrt{\frac{S(G)}{n}}.$$
    Therefore, $E_R(G)\leq g(\rho_1(G))\leq g\left(\sqrt{\frac{\sum_{i=1}^{n}T_i^2}{\sum_{i=1}^{n}R_i^2}}\right).$\\
    Suppose equality holds in (\ref{4.2ineq}), then $\rho_1(G)= \sqrt{\frac{\sum_{i=1}^{n}T_i^2}{\sum_{i=1}^{n}R_i^2}}$, by Theorem \ref{ti} $G$ is pseudo resistance regular. Applying equality condition in Cauchy-Schwarz inequality, we get
    \begin{equation*}
        \begin{aligned}
            \left(\sum_{i=2}^{n}|\rho_i(G)|\right)^2&= (n-1)\sum_{i=2}^{n}(\rho_i(G))^2.
        \end{aligned}
    \end{equation*}
    Hence, $|\rho_2(G)|=\cdots= |\rho_n(G)|.$
Further, from (\ref{cauchyschwartz}), we have \begin{equation*}
    \begin{aligned}
        \left((n-1)|\rho_i(G)|\right)^2&= (n-1)\sum_{i=2}^{n}(\rho_i(G))^2= (n-1)\left(S(G)-(\rho_1(G))^2\right),\\
        |\rho_i(G)|&= \sqrt{\frac{S(G)-(\rho_1(G))^2}{n-1}}, \; i\in\{2,3,\ldots, n\}.
    \end{aligned}
\end{equation*}
That is, $G$ has at most two distinct 
$R$-eigenvalues. Therefore, by Theorem \ref{distinct}, $G$ is a complete graph. Hence the theorem.
\end{proof}
\begin{theorem}\label{upperbd}
Let $\overline{R_i}$ denotes the average resistance degree of a graph $G$, then
    $$\rho_1(G)\leq \max\limits_{i\neq j}\sqrt{{\overline{R_i}}\,{\overline{R_j
}}}.$$
Equality holds if and only if $G$ is pseudo resistance regular.
\end{theorem}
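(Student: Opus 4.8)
The plan is to exploit the Perron--Frobenius structure of $R(G)$. Since $G$ is connected, every off-diagonal entry $r_{ij}$ ($i\neq j$) is strictly positive, so $R=R(G)$ is an irreducible nonnegative symmetric matrix with zero diagonal, and $\rho_1(G)$ is its Perron value with a positive eigenvector $Z=(z_1,\dots,z_n)^T$. Rather than work with $Z$ directly, I would pass to the rescaled vector $w_i=z_i/R_i$ (legitimate because $R_i>0$ for a connected graph), so that the eigenequation $RZ=\rho_1(G)Z$ becomes $\rho_1(G)\,R_i w_i=\sum_{j}r_{ij}R_j w_j$ for each $i$. The purpose of this rescaling is that the weights $r_{ij}R_j$ are precisely what turn partial row sums into the second resistance degrees $T_i=\sum_j r_{ij}R_j$, and hence into the average resistance degrees $\overline{R_i}=T_i/R_i$.

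Next I would select two indices: let $w_p=\max_i w_i$ and $w_q=\max_{i\neq p} w_i$ be the largest and second-largest entries of $w$. Evaluating the rescaled eigenequation at $i=p$, using $r_{pp}=0$ together with $w_j\le w_q$ for every $j\neq p$, gives $\rho_1(G)\,R_p w_p\le w_q\sum_j r_{pj}R_j=w_q T_p$, that is, $\rho_1(G)\,w_p\le \overline{R_p}\,w_q$. The symmetric estimate at $i=q$, now using $w_j\le w_p$ for all $j$ and $r_{qq}=0$, yields $\rho_1(G)\,w_q\le \overline{R_q}\,w_p$. Multiplying these two inequalities and cancelling the strictly positive factor $w_pw_q$ leaves $\rho_1(G)^2\le \overline{R_p}\,\overline{R_q}\le \max_{i\neq j}\overline{R_i}\,\overline{R_j}$, which is the desired bound after taking square roots.

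For the equality discussion, the forward implication is immediate: if $G$ is pseudo resistance regular then $\overline{R_i}=\ell$ is constant, so $RU=\ell U$ for $U=(R_1,\dots,R_n)^T$, whence $U$ is the (positive) Perron eigenvector, $\rho_1(G)=\ell$, and the right-hand side also equals $\ell$. For the converse I would trace back the chain: equality forces both $\rho_1(G)w_p=\overline{R_p}w_q$ and $\rho_1(G)w_q=\overline{R_q}w_p$ to hold with equality in the termwise bounds, and since $r_{pj}R_j>0$ for every $j\neq p$ this is only possible if $w_j=w_q$ for all $j\neq p$ and $w_j=w_p$ for all $j\neq q$; comparing these (for $n\ge 3$, through any third index) forces $w$ to be constant, hence $Z\propto U$, so $U$ is the Perron eigenvector and $\overline{R_i}=\rho_1(G)$ for all $i$, i.e. $G$ is pseudo resistance regular.

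The step I expect to be most delicate is not the inequality chain itself but the converse equality analysis: one must argue carefully that the termwise estimates become equalities, invoke the strict positivity of the resistance distances $r_{pj}$ (and of the $R_j$) in a connected graph to conclude that the relevant coordinates of $w$ coincide, and then combine the two conclusions ``$w_j$ constant away from $p$'' and ``$w_j$ constant away from $q$'' to force global constancy of $w$. The small case $n=2$ should be noted separately, but it is trivial since $K_2$ is resistance regular.
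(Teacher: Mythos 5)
Your argument is correct and is essentially the paper's proof in different clothing: your rescaling $w_i=z_i/R_i$ is exactly the similarity transformation $M=T(G)^{-1}RT(G)$ used in the paper, and the selection of the two largest entries followed by multiplying the two resulting inequalities is identical. Your treatment of the converse equality case is in fact more careful than the paper's (which asserts the constancy of the $\overline{R_i}$ without tracing back the termwise equalities), and your choice of $w_q=\max_{i\neq p}w_i$ avoids a minor ambiguity in the paper's definition of $y_q$ when several entries tie for the maximum.
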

\begin{proof}
    Let $T(G)= diag(R_1,\ldots, R_n)$ and $M= T(G)^{-1}RT(G)$, then the $(i,j)^{th}$ element of $M$ is $\frac{R_j}{R_i}r_{ij}.$ Now let $y= \left(y_1,\ldots, y_n\right)$ be the eigenvector of $T(G)^{-1}RT(G)$ corresponding to $\rho_1(G),$ with \begin{equation*}
        \begin{aligned}
            y_p&= \mathrm{max}\{y_i:1\leq i\leq n\},\\
            y_q&= \mathrm{max}\{y_i:y_i\neq y_p, 1\leq i\leq n\}.
        \end{aligned}
    \end{equation*} 
   Also,
    \begin{equation*}
        \begin{aligned}
            \left(T(G)^{-1}RT(G)\right)y&= \rho_1(G) y.
            \end{aligned}
            \end{equation*}

\noindent Then
            \begin{equation*}
        \begin{aligned}
            \rho_1(G) y_p= \sum_{i=1}^{n}\frac{R_i}{R_p}r_{pi}y_i &\leq \sum_{i=1}^{n}\frac{R_i}{R_p}r_{pi}y_q\\
            &= \frac{T_p}{R_p}y_q= \overline{R}_p y_q.
            \end{aligned}
            \end{equation*}
            That is, \begin{equation}\label{eqn4}
                \rho_1(G)y_p\leq \overline{R}_py_q.
            \end{equation}
           Similarly, \begin{equation*}
            \begin{aligned}
            \rho_1(G) y_q= \sum_{i=1}^{n}\frac{R_i}{R_q}r_{qi}y_i&\leq \sum_{i=1}^{n}\frac{R_i}{R_q}r_{qi}y_p\\
            &= \frac{T_q}{R_q}y_p= \overline{R}_qy_p.
        \end{aligned}
    \end{equation*}
    Thus, \begin{equation}\label{eqn5}
        \rho_1(G)y_q\leq \overline{R}_qy_p.
    \end{equation}
    From equations (\ref{eqn4}) and (\ref{eqn5}), we get \begin{equation*}
        \begin{aligned}
            \rho_1(G)^2 y_py_q&\leq \overline{R}_p\overline{R}_q y_p y_q,\\
            \rho_1(G)&\leq \max\limits_{i\neq j}\sqrt{\overline{R}_i\overline{R}_j}.
        \end{aligned}
    \end{equation*}

Suppose equality holds, then $\overline{R}_1= \cdots = \overline{R}_n.$ That is $G$ is pseudo resistance regular. Conversely, if $G$ is pseudo resistance regular, then $\overline{R}_1= \cdots = \overline{R}_n=c$. Then
the $i^{th}$ row sum of $M$ is $\sum_{j=1}^{n}\frac{R_j}{R_i}r_{ij}= \overline{R}_i=c,\; i\in \{1,\ldots , n\}.$
Therefore, $\rho_1(M)= \rho_1(R)= c$, since the matrices $M$ and $R$ are similar. Hence the theorem.

\end{proof}
\begin{theorem}\label{lowerbd}
Let $\overline{R_i}$ denotes the average resistance degree of a graph $G$, then
    $$\rho_1(G)\geq \min\limits_{i\neq j}\sqrt{\overline{R_i}\,\overline{R_j
}}.$$
Equality holds if and only if $G$ is pseudo resistance regular.
\end{theorem}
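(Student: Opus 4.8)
The plan is to mirror the argument used in Theorem \ref{upperbd}, since Theorem \ref{lowerbd} is its exact lower-bound counterpart. I would work with the same similarity-transformed matrix $M = T(G)^{-1}RT(G)$, where $T(G) = \mathrm{diag}(R_1,\ldots,R_n)$, whose $(i,j)$-th entry is $\frac{R_j}{R_i}r_{ij}$ and whose $i$-th row sum is exactly $\overline{R}_i = \frac{T_i}{R_i}$. Because $M$ and $R$ are similar, $\rho_1(M) = \rho_1(G)$. Let $y = (y_1,\ldots,y_n)^T$ be the eigenvector of $M$ for $\rho_1(G)$; but this time, instead of selecting the two largest coordinates, I would select the two \emph{smallest} coordinates $y_p = \min\{y_i\}$ and $y_q = \min\{y_i : y_i \neq y_p\}$.

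From the eigenvalue equation $(My)_p = \rho_1(G)y_p$ I would then reverse every inequality of the upper-bound proof: since each $y_i \geq y_q$ for all $i$ and the entries $\frac{R_i}{R_p}r_{pi}$ are nonnegative, I would bound
\begin{equation*}
\rho_1(G)y_p = \sum_{i=1}^{n}\frac{R_i}{R_p}r_{pi}y_i \geq \sum_{i=1}^{n}\frac{R_i}{R_p}r_{pi}y_q = \overline{R}_p y_q,
\end{equation*}
and symmetrically $\rho_1(G)y_q \geq \overline{R}_q y_p$. Multiplying these two inequalities and cancelling $y_p y_q$ (noting the coordinates may be taken positive, as in the previous proof) yields $\rho_1(G)^2 \geq \overline{R}_p \overline{R}_q \geq \min_{i\neq j}\overline{R}_i\overline{R}_j$, hence the claimed bound.

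For the equality characterization I would argue exactly as before. If $G$ is pseudo resistance regular, then every $\overline{R}_i = c$, so every row sum of $M$ equals $c$; since $M$ is similar to the (entrywise nonnegative, irreducible) matrix $R$, its Perron root is the common row sum $c$, giving $\rho_1(G) = c = \min_{i\neq j}\sqrt{\overline{R}_i\overline{R}_j}$. Conversely, forcing equality throughout the chain compels the selection inequalities to be tight, which drives all the $\overline{R}_i$ to a common value, i.e.\ pseudo resistance regularity.

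The only delicate point I anticipate is the sign/positivity bookkeeping in the minimum-coordinate selection. In the maximum case one uses $y_i \leq y_p$ freely; here I must ensure that replacing $y_i$ by $y_q$ genuinely \emph{decreases} each summand, which requires the weights $\frac{R_i}{R_p}r_{pi}$ to be nonnegative (true, since resistance distances and degrees are positive) and requires handling the diagonal term $r_{pp}=0$ harmlessly. I expect this is routine given that $R$ is a nonnegative irreducible matrix, so the Perron eigenvector can be taken strictly positive, letting the cancellation of $y_py_q$ preserve the inequality direction; this is the same mechanism that underpins Theorem \ref{upperbd}.
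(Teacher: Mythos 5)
Your proposal is correct and follows essentially the same route as the paper's own proof: the same similarity transform $M = T(G)^{-1}RT(G)$, the same selection of the two smallest eigenvector coordinates, the same pair of reversed inequalities multiplied together, and the same appeal to the Theorem \ref{upperbd} argument for the equality case. Your added remarks on positivity of the Perron eigenvector and the harmlessness of the $r_{pp}=0$ diagonal term are points the paper glosses over, so nothing is missing.
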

\begin{proof}
    Let $T(G)= diag(R_1,\ldots, R_n)$ and $M= T(G)^{-1}RT(G)$, then the $(i,j)^{th}$ element of $M$ is $\frac{R_j}{R_i}r_{ij}.$ Now let $y= \left(y_1,\ldots, y_n\right)$ be the eigenvector of $T(G)^{-1}RT(G)$ corresponding to $\rho_1(G),$ with \begin{equation*}
        \begin{aligned}
            y_p&= \min\{y_i:1\leq i\leq n\},\\
            y_q&= \min\{y_i:y_i\neq y_p, 1\leq i\leq n\}.
        \end{aligned}
    \end{equation*} 
   Also,
    \begin{equation*}
        \begin{aligned}
            \left(T(G)^{-1}RT(G)\right)y&= \rho_1(G) y.
            \end{aligned}
            \end{equation*}
            \noindent Then
             \begin{equation*}
        \begin{aligned}
            \rho_1(G) y_p= \sum_{i=1}^{n}\frac{R_i}{R_p}r_{pi}y_i &\geq \sum_{i=1}^{n}\frac{R_i}{R_p}r_{pi}y_q\\
            &= \frac{T_p}{R_p}y_q= \overline{R}_p y_q.
            \end{aligned}
            \end{equation*}
            That is, \begin{equation}\label{eqn6}
                \rho_1(G)y_p\geq \overline{R}_py_q.
            \end{equation}
           Similarly, \begin{equation*}
            \begin{aligned}
            \rho_1(G) y_q= \sum_{i=1}^{n}\frac{R_i}{R_q}r_{qi}y_i&
            \geq \sum_{i=1}^{n}\frac{R_i}{R_q}r_{qi}y_p\\
            &= \frac{T_q}{R_q}y_p= \overline{R}_qy_p.
        \end{aligned}
    \end{equation*}
    Thus \begin{equation}\label{eqn7}
        \rho_1(G)y_q\geq \overline{R}_qy_p.
    \end{equation}
    From equations (\ref{eqn6}) and (\ref{eqn7}), we get \begin{equation*}
        \begin{aligned}
            \rho_1(G)^2 y_py_q&\geq \overline{R}_p\overline{R}_q y_p y_q,\\
            \rho_1(G)&\geq \min\limits_{i\neq j}\sqrt{\overline{R}_i\overline{R}_j}.
        \end{aligned}
    \end{equation*}

The equality holds if and only if $G$ is pseudo resistance regular, proof follows by similar arguments in Theorem \ref{upperbd}.

\end{proof}
From Theorems \ref{upperbd} and \ref{lowerbd}, we get the lower and upper bound for the resistance energy of $G$ in terms of average resistance degrees.
\begin{corollary}
    For a graph $G$,
    $$2 \min\limits_{i\neq j}\sqrt{\overline{R_i}\overline{R_j
}}\leq E_R(G)\leq 2 \max\limits_{i\neq j}\sqrt{\overline{R_i}\overline{R_j
}}.$$
\end{corollary}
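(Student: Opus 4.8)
The plan is to recognize this statement as an immediate consequence of the two preceding theorems (Theorem \ref{upperbd} and Theorem \ref{lowerbd}) once combined with the structural fact about the resistance spectrum noted at the start of Section 4. The essential observation is that $R(G)$ has exactly one positive eigenvalue and $n-1$ negative eigenvalues, so that
\begin{equation*}
E_R(G)=\sum_{i=1}^n|\rho_i(G)|=\rho_1(G)+\sum_{i=2}^n(-\rho_i(G))=2\rho_1(G),
\end{equation*}
using $\sum_{i=1}^n\rho_i(G)=0$. This identity $E_R(G)=2\rho_1(G)$ is the bridge that converts the spectral-radius bounds into resistance-energy bounds.

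With that identity in hand, I would simply invoke the two theorems directly. Theorem \ref{upperbd} supplies the inequality $\rho_1(G)\leq \max_{i\neq j}\sqrt{\overline{R_i}\,\overline{R_j}}$, and Theorem \ref{lowerbd} supplies $\rho_1(G)\geq \min_{i\neq j}\sqrt{\overline{R_i}\,\overline{R_j}}$. Chaining these gives
\begin{equation*}
\min_{i\neq j}\sqrt{\overline{R_i}\,\overline{R_j}}\leq \rho_1(G)\leq \max_{i\neq j}\sqrt{\overline{R_i}\,\overline{R_j}},
\end{equation*}
and multiplying through by $2$ together with $E_R(G)=2\rho_1(G)$ yields the claimed two-sided bound on $E_R(G)$.

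There is no genuine obstacle here, since all the analytic work has already been carried out in Theorems \ref{upperbd} and \ref{lowerbd} via the similarity transform $M=T(G)^{-1}R\,T(G)$ and the Perron-eigenvector argument on its largest and smallest entries. The only point worth stating carefully is the justification of $E_R(G)=2\rho_1(G)$, which relies on the inertia of $R(G)$ (one positive, $n-1$ negative eigenvalues) and the trace-zero condition; if one wished to be fully self-contained one might cite the relevant property of the resistance distance matrix, but within this paper it is already taken as given in the opening of Section 4. Hence I would present the corollary as a one-line deduction, with the equality conditions (pseudo resistance regularity) inherited verbatim from the two source theorems.
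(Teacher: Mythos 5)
Your proposal is correct and matches the paper's own (implicit) argument exactly: the paper states this corollary as an immediate consequence of Theorems \ref{upperbd} and \ref{lowerbd} together with the identity $E_R(G)=2\rho_1(G)$ established at the start of Section 4, which is precisely the one-line deduction you describe.
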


\section{Resistance spectrum}
This section discusses $R$-spectrum of certain graphs. The following theorem directly follows from Theorem \ref{irr}.
\begin{proposition}\label{RJ}
    Let $R(G)$ be the resistance distance matrix of a $k$-resistance regular graph $G$ with $n$ vertices, then there exists a polynomial $Q(x)$ such that $Q(R(G))= J$. In this case $$Q(x)= n\frac{(x-\rho_2(G))\ldots(x-\rho_t(G))}{(k-\rho_2(G))\ldots (k-\rho_t(G))},$$ where $\rho_1(G), \ldots, \rho_t(G)$ are the distinct $R$-eigenvalues of $G$.
\end{proposition}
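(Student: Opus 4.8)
```latex
The plan is to derive the polynomial $Q(x)$ directly from the interpolation idea underlying Theorem~\ref{irr}. Since $G$ is $k$-resistance regular, the all-ones vector $\mathbf{1}$ is an eigenvector of $R(G)$ with eigenvalue $k$; indeed the $i$-th entry of $R(G)\mathbf{1}$ is exactly the resistance degree $R_i=k$. Thus $k$ is one of the $R$-eigenvalues of $G$, and because $R(G)$ has a single positive eigenvalue (the Perron eigenvalue $\rho_1(G)$, as noted at the start of Section~4) and $\mathbf{1}$ is a positive vector, Perron--Frobenius forces $k=\rho_1(G)$. So $\rho_1(G)=k$ is precisely the distinguished distinct eigenvalue, and $\mathbf{1}$ spans its (one-dimensional) eigenspace.

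First I would invoke Theorem~\ref{irr}: $R(G)$ is a real symmetric matrix that is irreducible (all off-diagonal resistance distances are strictly positive in a connected graph) whose row sums are all equal to $k$, so there exists a polynomial $Q(x)$ with $Q(R(G))=J$. The remaining task is to identify $Q$ explicitly. The natural candidate is to build $Q$ so that it annihilates every eigenvalue of $R(G)$ except $\rho_1(G)=k$, and takes the correct value at $k$. Define
\begin{equation*}
    p(x)=\frac{(x-\rho_2(G))\cdots(x-\rho_t(G))}{(k-\rho_2(G))\cdots(k-\rho_t(G))},
\end{equation*}
so that $p(\rho_i(G))=0$ for $i=2,\ldots,t$ and $p(k)=1$. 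Then set $Q(x)=n\,p(x)$, which is the claimed formula.

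Next I would verify $Q(R(G))=J$ by checking the action on an orthonormal eigenbasis of the symmetric matrix $R(G)$. For any eigenvector $w$ belonging to a distinct eigenvalue $\rho_i(G)$ with $i\geq 2$, we have $Q(R(G))w=Q(\rho_i(G))w=0$, since $p(\rho_i(G))=0$. For the Perron eigenvalue, the normalized eigenvector is $\frac{1}{\sqrt{n}}\mathbf{1}$, and $Q(R(G))\frac{1}{\sqrt{n}}\mathbf{1}=Q(k)\frac{1}{\sqrt{n}}\mathbf{1}=n\cdot\frac{1}{\sqrt{n}}\mathbf{1}=\sqrt{n}\,\mathbf{1}$. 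On the other hand $J$ has exactly the same action: $J w=0$ for every $w\perp\mathbf{1}$ (in particular for eigenvectors of the non-Perron eigenvalues, which are orthogonal to $\mathbf{1}$ by symmetry), while $J\frac{1}{\sqrt{n}}\mathbf{1}=\frac{n}{\sqrt{n}}\mathbf{1}=\sqrt{n}\,\mathbf{1}$. Since $Q(R(G))$ and $J$ agree on a full orthonormal eigenbasis of $R(G)$, they are equal.

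The only subtlety, and the step I expect to require the most care, is confirming that the eigenspaces for $\rho_2(G),\ldots,\rho_t(G)$ are all orthogonal to $\mathbf{1}$ and that $\mathbf{1}$ genuinely corresponds to the \emph{distinct} eigenvalue $\rho_1(G)=k$ with the full eigenspace it labels being accounted for; this is exactly what the single-positive-eigenvalue structure of $R(G)$ together with Perron--Frobenius guarantees, so no eigenvector outside $\mathrm{span}(\mathbf{1})$ shares the eigenvalue $k$. With that in hand the eigenbasis comparison is clean and the identification of $Q$ is forced, completing the proof.
```
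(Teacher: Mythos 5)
Your proposal is correct and matches the route the paper intends: the paper simply asserts that the proposition ``directly follows from Theorem~\ref{irr}'' and gives no further argument, while you supply exactly the missing verification --- identifying $k=\rho_1(G)$ via Perron--Frobenius (so that $\mathbf{1}$ spans the $k$-eigenspace and all other eigenspaces are orthogonal to it) and checking that the interpolating polynomial $Q$ and $J$ agree on an orthonormal eigenbasis of $R(G)$. No gaps; your argument is a complete, self-contained version of what the paper leaves implicit.
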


Let $\{u_1,\ldots, u_n, u_{1}',\ldots, u_{n}'\}$ be the vertex set of $D_2G$. From Theorem \ref{resdouble}, the resistance distance between two vertices $v_i$ and $v_j$ in $V(D_2G)$ is as follows,
     $$r_{ij}(D_2G)=\begin{cases}\frac{1}{r}, &\text{if $v_i=u_i$ and $v_j=u_{i}'$,}\\
     0, &\text{if $v_i= v_j$,}\\\frac{1}{4}\left(r_{ij}(G)+\frac{2}{r}\right), &\text{otherwise.}\end{cases}$$
     In the following theorem, we obtain the resistance eigenvalues of the double graph of a resistance regular graph.
\begin{theorem}
    Let $G$ be an $r$-regular and $k$-resistance regular graph of order $n$ with $R$-eigenvalues $\rho_1(G), \ldots, \rho_n(G)$. Then the $R$-eigenvalues of $D_2G$ is as follows,
    $$Spec_{R}(D_2G)= \begin{pmatrix}
        \frac{\rho_1(G)}{2}+\frac{n}{r}&\frac{\rho_2(G)}{2}&\ldots&\frac{\rho_n(G)}{2}&-\frac{1}{r}\\
        1&1&\ldots &1&n
    \end{pmatrix}.$$
\end{theorem}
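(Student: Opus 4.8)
The plan is to write the resistance distance matrix of $D_2G$ in a $2\times 2$ block form using the vertex ordering $\{u_1,\ldots,u_n\}$ followed by $\{u_1',\ldots,u_n'\}$, and then to diagonalize it with the block-symmetry trick of Lemma \ref{BS}. First I would use the displayed formula for $r_{ij}(D_2G)$ (the specialization of Theorem \ref{resdouble} to the $r$-regular case) to express $R(D_2G)$ as a block matrix $\begin{bmatrix}B_0 & B_1\\ B_1 & B_0\end{bmatrix}$. Here the diagonal block $B_0$ records resistances between two unprimed (or two primed) vertices: for $i\neq j$ it equals $\frac14\!\left(r_{ij}(G)+\frac2r\right)$ and its diagonal is $0$, while the off-diagonal block $B_1$ records resistances between a primed and an unprimed vertex: its diagonal entry is $\frac1r$ (the $u_i$-to-$u_i'$ case) and its off-diagonal entries are again $\frac14\!\left(r_{ij}(G)+\frac2r\right)$. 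Writing these in terms of $R(G)$ and $J_n,I_n$, I expect
\begin{equation*}
B_0=\tfrac14 R(G)+\tfrac{1}{2r}(J_n-I_n),\qquad
B_1=\tfrac14 R(G)+\tfrac{1}{2r}J_n-\tfrac{1}{2r}I_n+\tfrac1r I_n,
\end{equation*}
so that $B_0$ and $B_1$ differ only on the diagonal. The crucial bookkeeping is that the extra $-\tfrac1r$ on the diagonal of $R(D_2G)$'s diagonal block (the zero diagonal) versus the $+\tfrac1r$ on the diagonal of the off-diagonal block produces $B_0-B_1=-\tfrac1r I_n$ while $B_0+B_1=\tfrac12 R(G)+\tfrac1r J_n-\tfrac1r I_n+\text{(diagonal correction)}$.

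By Lemma \ref{BS}, the spectrum of $R(D_2G)$ is the union of the spectra of $B_0+B_1$ and $B_0-B_1$. The matrix $B_0-B_1=-\tfrac1r I_n$ is scalar, contributing the eigenvalue $-\tfrac1r$ with multiplicity $n$, which matches the last column of the claimed spectrum. The remaining work is to show that $B_0+B_1$ has eigenvalues $\tfrac{\rho_1(G)}{2}+\tfrac{n}{r}$ and $\tfrac{\rho_i(G)}{2}$ for $i=2,\ldots,n$. For this I would compute $B_0+B_1=\tfrac12 R(G)+\tfrac1r J_n$ (after the diagonal terms cancel correctly) and then invoke the hypothesis that $G$ is $k$-resistance regular: by Proposition \ref{RJ} the all-ones vector $\mathbf 1$ is the Perron eigenvector of $R(G)$ with eigenvalue $\rho_1(G)=k$, and $J_n$ shares this eigenvector while annihilating the orthogonal complement. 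Hence on the Perron direction the eigenvalue of $B_0+B_1$ is $\tfrac{\rho_1(G)}{2}+\tfrac{n}{r}$ (using $J_n\mathbf 1=n\mathbf 1$), and on each eigenvector orthogonal to $\mathbf 1$ the $J_n$ term vanishes, leaving eigenvalue $\tfrac{\rho_i(G)}{2}$ for $i=2,\ldots,n$.

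The main obstacle I anticipate is the diagonal bookkeeping that forces $B_0+B_1=\tfrac12 R(G)+\tfrac1r J_n$ exactly. One must track three competing diagonal contributions: the zero diagonal of $R(G)$, the $\tfrac{1}{r}$ coming from the $u_i$-to-$u_i'$ resistance in $B_1$, and the $-\tfrac{1}{2r}I_n$ pieces appearing in both blocks from writing $\frac14\!\left(r_{ij}(G)+\frac2r\right)$ as $\tfrac14R(G)+\tfrac{1}{2r}(J_n-I_n)$ on the off-diagonal positions. Getting these to sum precisely to the $\tfrac1r J_n$ term (with no residual diagonal) is where an arithmetic slip is most likely, so I would verify it entrywise on a diagonal and on an off-diagonal position before passing to the spectral argument. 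The resistance-regularity of $G$ is essential precisely because it guarantees $\mathbf 1$ is a common eigenvector of $R(G)$ and $J_n$, which is what lets the two terms of $B_0+B_1$ be simultaneously diagonalized; without it the $R(G)$ and $J_n$ parts would not commute through a common eigenbasis and the clean formula would fail.
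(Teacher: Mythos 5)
Your proposal is correct and follows essentially the same route as the paper: write $R(D_2G)$ in the block form $B_0=\tfrac14R(G)+\tfrac{1}{2r}(J-I)$, $B_1=\tfrac14R(G)+\tfrac{1}{2r}(J+I)$, apply Lemma \ref{BS} to reduce to $B_0+B_1=\tfrac12R(G)+\tfrac1r J$ and $B_0-B_1=-\tfrac1r I$, and use resistance regularity (via Proposition \ref{RJ}) to diagonalize $R(G)$ and $J$ simultaneously. The diagonal bookkeeping you flag as the main risk does work out exactly as you predict, so no gap remains.
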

\begin{proof}
By a proper labelling of vertices in $D_2G$ the resistance distance matrix of $D_2G$ can be written as
     \[R(D_2G)= \begin{bmatrix}
         \frac{1}{4}R+\frac{1}{2r}(J-I)&\frac{1}{4}R+\frac{1}{2r}(J+I)\\&\\
         \frac{1}{4}R+\frac{1}{2r}(J+I)&\frac{1}{4}R+\frac{1}{2r}(J-I)
     \end{bmatrix}.\]
From Lemma \ref{BS} and Proposition \ref{RJ}, we have 
$$Spec_{R}(D_2G)= \begin{pmatrix}
        \frac{\rho_1(G)}{2}+\frac{n}{r}&\frac{\rho_2(G)}{2}&\cdots&\frac{\rho_n(G)}{2}&-\frac{1}{r}\\
        1&1&\cdots &1&n
    \end{pmatrix}.$$
     
\end{proof}
\begin{corollary}
    For a $k$-resistance regular and $r$-regular graph of order $n$,
    $E_R(D_2G)= \rho_1(G)+\frac{2n}{r}.$
\end{corollary}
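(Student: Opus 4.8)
The plan is to read off $E_R(D_2G)$ directly from the resistance spectrum established in the preceding theorem, combined with the structural fact recorded at the opening of this section: the resistance distance matrix of a connected graph has exactly one positive eigenvalue, with all remaining eigenvalues negative. Since $G$ is connected, $D_2G$ is connected as well, so $R(D_2G)$ inherits this sign pattern. Consequently $E_R(D_2G)=2\rho_1(D_2G)$, and it suffices to identify the unique positive eigenvalue among those listed.

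First I would confirm that $\frac{\rho_1(G)}{2}+\frac{n}{r}$ is that positive eigenvalue. Because $G$ is $k$-resistance regular, the all-ones vector is an eigenvector of $R(G)$ with eigenvalue $k$, and by the Perron--Frobenius property of the nonnegative irreducible matrix $R(G)$ this is its spectral radius, so $\rho_1(G)=k>0$ while $\rho_2(G),\ldots,\rho_n(G)$ are all negative. Scanning the spectrum from the theorem, the entries $\frac{\rho_i(G)}{2}$ for $i\ge 2$ together with $-\frac{1}{r}$ are all strictly negative, whereas $\frac{\rho_1(G)}{2}+\frac{n}{r}$ is strictly positive; hence it is the sole positive $R$-eigenvalue of $D_2G$ and equals $\rho_1(D_2G)$.

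The computation is then immediate:
\[
E_R(D_2G)=2\rho_1(D_2G)=2\left(\frac{\rho_1(G)}{2}+\frac{n}{r}\right)=\rho_1(G)+\frac{2n}{r}.
\]
As an independent consistency check, one can instead sum absolute values across the entire spectrum and use $\mathrm{tr}\,R(D_2G)=0$ (the diagonal entries vanish): this forces the single positive eigenvalue to equal the sum of the absolute values of all the negative ones, so doubling the positive eigenvalue recovers the same total and avoids tallying the individual $|\rho_i(G)|$.

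I do not anticipate a genuine obstacle, since the substantive work was already carried out in deriving the spectrum. The only point meriting explicit justification is the sign of each listed eigenvalue, which rests on $k$-resistance regularity yielding $\rho_1(G)=k>0$ and on the single-positive-eigenvalue property of resistance matrices of connected graphs; once these are in hand, the corollary is a one-line substitution.
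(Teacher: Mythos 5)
Your proposal is correct and matches the paper's (implicit) argument: the corollary is read off from the spectrum of $D_2G$ in the preceding theorem together with the identity $E_R=2\rho_1$ recorded at the start of Section 4, which is exactly what you do. Your extra care in verifying that $\frac{\rho_1(G)}{2}+\frac{n}{r}$ is the unique positive eigenvalue is a reasonable addition but not a different method.
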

\begin{corollary}
    If $G$ is $k$-resistance regular and $r$-regular, then $D_2G$ is $\left(\frac{k}{2}+\frac{n}{r}\right)$-resistance regular.
\end{corollary}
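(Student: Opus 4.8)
The plan is to prove the claim directly from the definition: a graph is $\left(\frac{k}{2}+\frac{n}{r}\right)$-resistance regular exactly when every row sum of its resistance distance matrix equals $\frac{k}{2}+\frac{n}{r}$. The block form of $R(D_2G)$ established in the proof of the preceding theorem,
\[R(D_2G)= \begin{bmatrix}
\frac{1}{4}R+\frac{1}{2r}(J-I)&\frac{1}{4}R+\frac{1}{2r}(J+I)\\
\frac{1}{4}R+\frac{1}{2r}(J+I)&\frac{1}{4}R+\frac{1}{2r}(J-I)
\end{bmatrix},\]
reduces the whole argument to a single row-sum computation once the hypotheses on $G$ are fed in.

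First I would record the action of each constituent matrix on the all-ones vector $\mathbf{1}_n$. Since $G$ is $k$-resistance regular, $R\mathbf{1}_n = k\mathbf{1}_n$; moreover $J\mathbf{1}_n = n\mathbf{1}_n$ and $I\mathbf{1}_n = \mathbf{1}_n$. Hence the row sum of each diagonal block $\frac{1}{4}R+\frac{1}{2r}(J-I)$ is $\frac{k}{4}+\frac{n-1}{2r}$, while the row sum of each off-diagonal block $\frac{1}{4}R+\frac{1}{2r}(J+I)$ is $\frac{k}{4}+\frac{n+1}{2r}$. Any row indexed by a vertex $u_i$ then collects the diagonal-block sum from the first $n$ columns and the off-diagonal-block sum from the last $n$ columns, giving
\[\left(\frac{k}{4}+\frac{n-1}{2r}\right)+\left(\frac{k}{4}+\frac{n+1}{2r}\right)=\frac{k}{2}+\frac{n}{r}.\]
By the symmetry of the block layout (the two diagonal blocks coincide and the two off-diagonal blocks coincide), every row indexed by a copy vertex $u_i'$ yields exactly the same value. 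Thus $R(D_2G)\mathbf{1}_{2n}=\left(\frac{k}{2}+\frac{n}{r}\right)\mathbf{1}_{2n}$, which is precisely the assertion that $D_2G$ is $\left(\frac{k}{2}+\frac{n}{r}\right)$-resistance regular.

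There is no genuine obstacle here: the argument is a routine verification of constant row sums, and the only graph-theoretic input beyond elementary linear algebra is the single identity $R\mathbf{1}_n=k\mathbf{1}_n$ coming from the $k$-resistance regularity of $G$. As a consistency check, the resulting constant $\frac{k}{2}+\frac{n}{r}$ matches the largest $R$-eigenvalue $\frac{\rho_1(G)}{2}+\frac{n}{r}$ of the preceding theorem, because $\rho_1(G)=k$ for a $k$-resistance regular graph: the all-ones vector is its Perron eigenvector and $R(G)$ carries exactly one positive eigenvalue. Equivalently, combining $\mathcal{K}f(G)=\frac{nk}{2}$ from Corollary \ref{cor7.3.1} with the equality case of Proposition \ref{kf} forces $\rho_1(D_2G)$ to equal the common row sum, so the spectral computation and the direct row-sum computation corroborate each other.
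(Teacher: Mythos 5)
Your proof is correct and follows the route the paper evidently intends: the corollary is stated without a separate proof, and the natural derivation is exactly your direct row-sum computation on the block form of $R(D_2G)$ established in the preceding theorem, using $R\mathbf{1}_n=k\mathbf{1}_n$, $J\mathbf{1}_n=n\mathbf{1}_n$. Your arithmetic checks out, and the consistency check against $\rho_1(D_2G)=\frac{\rho_1(G)}{2}+\frac{n}{r}$ (via $\rho_1(G)=k$ for a resistance regular graph) is a nice confirmation.
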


Consider $G[K_2].$ From Proposition \ref{reslexico}, the resistance distance between two vertices $w_i= (v_i, u_i)$ and $w_j= (v_j, u_j)$ is as follows,
$$r_{ij}(G[K_2])=\begin{cases}\frac{1}{r+1}, &\text{if $v_i=v_j$ and $u_i\neq u_j$,}\\
     \frac{r_{ij}(G)}{4}+\frac{1}{2(r+1)}, &\text{otherwise.}\end{cases}$$
     The next theorem provides the $R$-eigenvalues of $G[K_2]$ when $G$ is resistance regular.
\begin{theorem}
    Let $G$ be an $r$-regular and $k$-resistance regular graph of order $n$ with $R$-eigenvalues $\rho_1(G), \ldots, \rho_n(G)$. Then 
    $$Spec_{R}(G[K_2])= \begin{pmatrix}
        \frac{\rho_1(G)}{2}+\frac{n}{r+1}&\frac{\rho_2(G)}{2}&\ldots&\frac{\rho_n(G)}{2}&-\frac{1}{r+1}\\
        1&1&\ldots &1&n
    \end{pmatrix}.$$
\end{theorem}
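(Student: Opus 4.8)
The plan is to mirror the argument just completed for the double graph $D_2G$, since the resistance distance formula for $G[K_2]$ has exactly the same shape with $r$ replaced by $r+1$. First I would fix a labelling of $V(G[K_2])$ that lists the $n$ vertices $(v_i,u_1)$ first and the $n$ vertices $(v_j,u_2)$ second, where $K_2=\{u_1,u_2\}$. Using the case formula from Proposition \ref{reslexico} specialized to $r$-regular $G$, together with the fact that the diagonal of $R(G[K_2])$ is zero, I would read off the block-symmetric form
\[
R(G[K_2])=\begin{bmatrix}
\frac{1}{4}R+\frac{1}{2(r+1)}(J-I) & \frac{1}{4}R+\frac{1}{2(r+1)}(J+I)\\
\frac{1}{4}R+\frac{1}{2(r+1)}(J+I) & \frac{1}{4}R+\frac{1}{2(r+1)}(J-I)
\end{bmatrix},
\]
where $R=R(G)$. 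The verification is a short entrywise check: the diagonal blocks reproduce $\frac{r_{ij}(G)}{4}+\frac{1}{2(r+1)}$ off the diagonal and $0$ on it, while the off-diagonal blocks additionally carry $\frac{1}{r+1}$ on the diagonal arising from the case $v_i=v_j,\ u_i\neq u_j$.

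With the matrix written as $\begin{bmatrix} B_0 & B_1\\ B_1 & B_0\end{bmatrix}$, I would invoke Lemma \ref{BS}: the spectrum of $R(G[K_2])$ is the union of the spectra of $B_0+B_1$ and $B_0-B_1$. The difference collapses cleanly, since $B_0-B_1=-\frac{1}{r+1}I$, contributing the eigenvalue $-\frac{1}{r+1}$ with multiplicity $n$. The sum gives $B_0+B_1=\frac{1}{2}R+\frac{1}{r+1}J$, and analyzing this matrix is the heart of the proof.

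To diagonalize $\frac{1}{2}R+\frac{1}{r+1}J$ I would use resistance regularity. Since $G$ is $k$-resistance regular, every row sum of $R$ equals $k$, so $R\mathbf{1}=k\mathbf{1}$; because $R$ is a nonnegative irreducible matrix with positive eigenvector $\mathbf{1}$, Perron--Frobenius forces $k=\rho_1(G)$. The remaining eigenvectors $\mathbf{x}_2,\dots,\mathbf{x}_n$ of $R$, with eigenvalues $\rho_2(G),\dots,\rho_n(G)$, are orthogonal to $\mathbf{1}$ and hence lie in the kernel of $J$; alternatively Proposition \ref{RJ} guarantees $J=Q(R)$ for a polynomial $Q$, so $R$ and $J$ share an eigenbasis. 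Applying $\frac{1}{2}R+\frac{1}{r+1}J$ to $\mathbf{1}$ yields the eigenvalue $\frac{k}{2}+\frac{n}{r+1}=\frac{\rho_1(G)}{2}+\frac{n}{r+1}$, and applying it to each $\mathbf{x}_i$ yields $\frac{\rho_i(G)}{2}$. Combining these with the $n$ copies of $-\frac{1}{r+1}$ from $B_0-B_1$ reproduces the claimed $Spec_R(G[K_2])$.

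The only genuine obstacle is justifying that $J$ annihilates the non-Perron eigenvectors of $R$; everything else is bookkeeping. This hinges on the symmetry of $R$ (so its eigenvectors are mutually orthogonal) together with resistance regularity (so $\mathbf{1}$ is itself an eigenvector with eigenvalue $\rho_1(G)$), and it is precisely the point where Proposition \ref{RJ} can be cited to make the shared eigenbasis of $R$ and $J$ explicit.
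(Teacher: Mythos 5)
Your proposal is correct and follows essentially the same route as the paper: the same block-symmetric form of $R(G[K_2])$, reduction via Lemma \ref{BS} to $B_0\pm B_1$, and simultaneous diagonalization of $\tfrac{1}{2}R+\tfrac{1}{r+1}J$ using resistance regularity (the paper cites Proposition \ref{RJ} for this last step, which you also invoke). Your write-up simply fills in the entrywise verification and the eigenvector bookkeeping that the paper leaves implicit.
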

\begin{proof}
 By a proper labelling of vertices in $G[K_2]$ the resistance distance matrix of $G[K_2]$ can be written as
     \[R(G[K_2])= \begin{bmatrix}
         \frac{1}{4}R(G)+\frac{1}{2(r+1)}(J-I)&\frac{1}{4}R(G)+\frac{1}{2(r+1)}(J+I)\\&\\
         \frac{1}{4}R(G)+\frac{1}{2(r+1)}(J+I)&\frac{1}{4}R(G)+\frac{1}{2(r+1)}(J-I)
     \end{bmatrix}.\]
From Lemma \ref{BS} and Proposition \ref{RJ}, we have 
$$Spec_{R}(G[K_2])= \begin{pmatrix}
        \frac{\rho_1(G)}{2}+\frac{n}{r+1}&\frac{\rho_2(G)}{2}&\ldots&\frac{\rho_n(G)}{2}&-\frac{1}{r+1}\\
        1&1&\ldots &1&n
    \end{pmatrix}.$$
\end{proof}
\begin{corollary}
    For a $k$-resistance regular and $r$-regular graph of order $n$,
    $E_R(G[K_2])= \rho_1(G)+\frac{2n}{r+1}.$
\end{corollary}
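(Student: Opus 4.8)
The plan is to read the energy directly off the resistance spectrum of $G[K_2]$ computed in the preceding theorem, combined with the structural fact recorded at the start of Section 4 that a resistance distance matrix has exactly one positive eigenvalue and all of its remaining eigenvalues negative. Applying this to $G[K_2]$, which has $2n$ vertices, I would invoke the identity $E_R(G[K_2]) = 2\rho_1(G[K_2])$, thereby reducing the task to identifying the resistance spectral radius among the $2n$ listed eigenvalues.

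First I would verify that $\frac{\rho_1(G)}{2} + \frac{n}{r+1}$ is the unique positive eigenvalue of $R(G[K_2])$. Because $G$ is itself resistance regular on $n$ vertices, $R(G)$ carries a single positive eigenvalue $\rho_1(G)$ and nonpositive eigenvalues $\rho_2(G), \ldots, \rho_n(G)$; consequently each half-value $\frac{\rho_i(G)}{2}$ with $i \ge 2$ is nonpositive, while the eigenvalue $-\frac{1}{r+1}$ of multiplicity $n$ is strictly negative. This isolates $\frac{\rho_1(G)}{2} + \frac{n}{r+1}$ as the resistance spectral radius, so $\rho_1(G[K_2]) = \frac{\rho_1(G)}{2} + \frac{n}{r+1}$, and doubling produces the claimed value $\rho_1(G) + \frac{2n}{r+1}$.

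As an equivalent route not relying on the spectral-radius identity, I would instead sum absolute values of the $2n$ eigenvalues directly. Using $\sum_{i=1}^n \rho_i(G) = 0$, which holds because the diagonal of $R(G)$ is zero, one obtains $-\sum_{i=2}^n \frac{\rho_i(G)}{2} = \frac{\rho_1(G)}{2}$; the Perron eigenvalue $\frac{\rho_1(G)}{2} + \frac{n}{r+1}$ supplies an extra $\frac{n}{r+1}$, and the $n$-fold eigenvalue $-\frac{1}{r+1}$ contributes $\frac{n}{r+1}$ in absolute value, so these combine to $\frac{2n}{r+1}$. The only step that needs genuine justification is the sign bookkeeping in the second paragraph — confirming that the large eigenvalue is the positive one and that every other eigenvalue is nonpositive — after which the remaining arithmetic is immediate.
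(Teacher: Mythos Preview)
Your proposal is correct and matches the paper's implicit approach: the corollary is stated without proof, following directly from the preceding theorem's spectrum together with the identity $E_R(\cdot)=2\rho_1(\cdot)$ noted at the beginning of Section~4. Your sign check isolating $\frac{\rho_1(G)}{2}+\frac{n}{r+1}$ as the unique positive eigenvalue, and your alternative direct summation using $\sum_i\rho_i(G)=0$, both fill in exactly the details the paper leaves implicit.
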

\begin{corollary}
    If $G$ is $k$-resistance regular and $r$-regular then $G[K_2]$ is $\left(\frac{k}{2}+\frac{n}{r+1}\right)$-resistance regular.
\end{corollary}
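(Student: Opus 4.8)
The plan is to read off the common resistance degree directly from the block form of $R(G[K_2])$ established in the preceding theorem, since a graph is $c$-resistance regular precisely when every row sum of its resistance distance matrix equals $c$. So the whole task reduces to computing a single row sum of the $2n \times 2n$ matrix and checking that it is independent of the chosen vertex.

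First I would fix a vertex lying in the first block, say the one indexed by row $i$ of the top half of $R(G[K_2])$. Its resistance degree is the sum of the $i$th row of the full matrix, which splits as the $i$th row sum of the diagonal block $\frac{1}{4}R(G)+\frac{1}{2(r+1)}(J-I)$ plus the $i$th row sum of the off-diagonal block $\frac{1}{4}R(G)+\frac{1}{2(r+1)}(J+I)$. Next I would evaluate each piece: because $G$ is $k$-resistance regular, every row sum of $R(G)$ equals $k$, so each occurrence of $\frac{1}{4}R(G)$ contributes $\frac{k}{4}$; the matrix $J$ contributes $n$ to each row sum while $I$ contributes $1$, so the diagonal block contributes $\frac{k}{4}+\frac{n-1}{2(r+1)}$ and the off-diagonal block contributes $\frac{k}{4}+\frac{n+1}{2(r+1)}$. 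Adding these yields
\[
\frac{k}{2}+\frac{(n-1)+(n+1)}{2(r+1)}=\frac{k}{2}+\frac{n}{r+1},
\]
which is independent of $i$. By the block symmetry of $R(G[K_2])$, a vertex in the second block produces exactly the same value, so every resistance degree equals $\frac{k}{2}+\frac{n}{r+1}$ and the claim follows.

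There is essentially no serious obstacle here: once the matrix $R(G[K_2])$ is in hand, the argument is a short row-sum computation. The only point requiring mild care is the bookkeeping of the $J-I$ versus $J+I$ terms, whose $\mp 1$ contributions cancel when the two blocks are summed, leaving the clean value $\frac{n}{r+1}$. As an alternative route I would note that for any resistance regular graph the common resistance degree is automatically the unique positive $R$-eigenvalue $\rho_1$, because then the all-ones vector is an eigenvector with that eigenvalue; the result would then drop straight out of the spectrum $Spec_R(G[K_2])$ computed in the previous theorem upon substituting $\rho_1(G)=k$.
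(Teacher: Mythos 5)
Your row-sum computation is correct and is precisely how the paper's (unproved) corollary follows from the block form of $R(G[K_2])$ established in the preceding theorem: each diagonal block contributes $\frac{k}{4}+\frac{n-1}{2(r+1)}$ and each off-diagonal block $\frac{k}{4}+\frac{n+1}{2(r+1)}$ to every row, giving the constant value $\frac{k}{2}+\frac{n}{r+1}$. One caution about your proposed alternative route: knowing $Spec_{R}(G[K_2])$ alone does not imply that $G[K_2]$ is resistance regular (equal row sums must be established first, and only then is the common value forced to be $\rho_1$ by Perron--Frobenius), so that argument presupposes the conclusion and the direct row-sum computation should remain the proof.
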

Let $G_1 = (V(G_1), E(G_1))$ and $G_2 = (V(G_2), E(G_2))$ with $V(G_1)=\{v_i:1\leq i\leq n\}$, $V(G_2)=\{u_i:1\leq i\leq n\}.$ Then the \textit{cartesian product} $G_1\times G_2$ is a graph with $V(G_1\times G_2)= V(G_1)\times V(G_2)$ in which two vertices ($v_1, u_1$) and ($v_2, u_2$) are adjacent if $v_1= v_2$ and ($u_1, u_2$) $\in  E(G_2)$ or $u_1 = u_2$ and
($v_1, v_2$) $\in E(G_1)$.
\begin{theorem}
    Let $G=K_n$, then
   $$Spec_{R}(G\times K_2)= \begin{pmatrix}
        \frac{5n^2+2n-4}{n(n+2)}&-\frac{2}{n}&-1&-\frac{2}{n+2}\\1&n-1&1&n-1
    \end{pmatrix}.$$
\end{theorem}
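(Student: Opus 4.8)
The plan is to first determine the three essentially distinct resistance distances in $K_n\times K_2$, then assemble $R(K_n\times K_2)$ into a block-symmetric matrix and apply Lemma \ref{BS}. Viewing $K_n\times K_2$ as two copies of $K_n$ joined by a perfect matching, the vertex-transitivity of the Cartesian product reduces every entry of $R$ to three values: the \emph{rung} distance $r_1$ between $(v_i,u_1)$ and $(v_i,u_2)$, the \emph{within-layer} distance $r_2$ between $(v_i,u_a)$ and $(v_j,u_a)$ with $i\neq j$, and the \emph{cross} distance $r_3$ between $(v_i,u_1)$ and $(v_j,u_2)$ with $i\neq j$.

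To compute these I would exploit the Kronecker structure of the Laplacian, $L(K_n\times K_2)=L(K_n)\otimes I_2+I_n\otimes L(K_2)=(nI_n-J_n)\otimes I_2+I_n\otimes(2I_2-J_2)$. Its eigenpairs are the tensor products $\phi\otimes\psi$ of the Laplacian eigenvectors of $K_n$ and $K_2$, so the nonzero eigenvalues are $2$ (once), $n$ ($n-1$ times) and $n+2$ ($n-1$ times). Writing $r_{ab}=(e_a-e_b)^{T}L^{\dagger}(e_a-e_b)=\sum_{\gamma_k\neq 0}\gamma_k^{-1}\bigl(\phi_k(a)-\phi_k(b)\bigr)^2$ and using the completeness relations $\sum_k\phi_k(i)\phi_k(j)=\delta_{ij}$ (which collapse the sums over the $(n-1)$-fold eigenspaces), I expect to obtain
\[
r_1=\frac{3}{n+2},\qquad r_2=\frac{2(n+1)}{n(n+2)},\qquad r_3=\frac{3n+2}{n(n+2)}.
\]
The case $n=2$ is a useful sanity check, since $K_2\times K_2=C_4$ and Lemma \ref{rescycle} gives $r_1=r_2=\tfrac34$ and $r_3=1$, matching the formulas.

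With these in hand, ordering the vertices layer by layer gives
\[
R(K_n\times K_2)=\begin{bmatrix}B_0 & B_1\\ B_1 & B_0\end{bmatrix},\qquad B_0=r_2(J_n-I_n),\quad B_1=r_1I_n+r_3(J_n-I_n),
\]
which is block symmetric, so by Lemma \ref{BS} the spectrum of $R$ is the union of the spectra of $B_0+B_1$ and $B_0-B_1$. Both of these have the form $\alpha J_n+\beta I_n$, whose eigenvalues are $\alpha n+\beta$ (simple, with eigenvector $\mathbf 1_n$) and $\beta$ (with multiplicity $n-1$). Substituting the values of $r_1,r_2,r_3$, the eigenvalue $\alpha n+\beta$ of $B_0+B_1=(r_2+r_3)J_n+(r_1-r_2-r_3)I_n$ becomes $\frac{5n^2+2n-4}{n(n+2)}$ while its $\beta$-eigenvalue becomes $-\frac2n$; for $B_0-B_1=(r_2-r_3)J_n+(r_3-r_2-r_1)I_n$ the two values are $-1$ and $-\frac{2}{n+2}$, which yields exactly the claimed spectrum with multiplicities $1,\,n-1,\,1,\,n-1$. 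The main obstacle is the first step: the preliminaries supply no resistance formula for Cartesian products, so the three distances must be computed from scratch, and the only delicate point is the careful bookkeeping of the tensor-eigenvector sums through the completeness relations.
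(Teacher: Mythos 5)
Your proposal is correct --- I checked all three resistance distances ($r_1=\tfrac{3}{n+2}$, $r_2=\tfrac{2(n+1)}{n(n+2)}$, $r_3=\tfrac{3n+2}{n(n+2)}$) and they agree exactly with the entries of the block matrix the paper writes down, namely diagonal blocks $\tfrac{n+1}{n+2}R(K_n)$ and off-diagonal blocks $\tfrac{1}{n+2}\bigl(3J+R(K_n)\bigr)$ with $R(K_n)=\tfrac{2}{n}(J-I)$; the eigenvalues of $B_0\pm B_1$ then come out as claimed. The difference from the paper lies entirely in how the three distances are obtained. The paper partitions $L(K_n\times K_2)=\bigl[\begin{smallmatrix}(n+1)I-J & -I\\ -I & (n+1)I-J\end{smallmatrix}\bigr]$, computes a $\{1\}$-inverse via the Schur-complement formula of Lemma \ref{pinv} (inverting $H=L_1-L_2^TL_3^{-1}L_2=\tfrac{n+2}{n+1}(nI-J)$ in the Moore--Penrose sense), and reads off $r_{ij}=l^{(1)}_{ii}+l^{(1)}_{jj}-l^{(1)}_{ij}-l^{(1)}_{ji}$. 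You instead use the spectral formula $r_{ab}=\sum_{\gamma_k\neq 0}\gamma_k^{-1}\bigl(\phi_k(a)-\phi_k(b)\bigr)^2$ together with the Kronecker eigenstructure of the Cartesian product and the completeness relation $\sum_k f_k(i)f_k(j)=\delta_{ij}-\tfrac1n$; this is consistent with Theorem \ref{evectorformX} in the preliminaries (via $L^\dagger=X-\tfrac1nJ$), so it is fully supported by the paper's toolkit. Your route generalizes more readily to other Cartesian products with known Laplacian spectra and avoids guessing a generalized inverse, while the paper's route produces $L^{(1)}$ explicitly, which is reusable for other purposes. In the final step you also diagonalize $\alpha J_n+\beta I_n$ directly rather than invoking Proposition \ref{RJ} to absorb the $J$ blocks, which makes the multiplicity count $1,\,n-1,\,1,\,n-1$ more transparent. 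The only presentational weakness is the hedged phrasing (``I expect to obtain'') around the three distances; since these are the crux of the argument, the tensor-eigenvector sums should be written out, but the $n=2$ check against $C_4$ and my verification confirm the values are right.
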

\begin{proof}
We have, \[L(K_n\times K_2)= \begin{bmatrix}
    (n+1)I-J & -I\\-I & (n+1)I-J
\end{bmatrix}.\]
Let $L_1= (n+1)I-J= L_3$ and $L_2=-I$, then \begin{equation*}
    \begin{aligned}
        H&= L_1-L_2^TL_3^{-1}L_2\\
        &= \frac{n+2}{n+1}(nI-J)\\
        H^{\dagger}&= \frac{n+1}{n^2(n+2)}(nI-J)\\
        -H^{\dagger}L_2L_3^{-1}&= \frac{1}{n^2(n+2)}\left(nI-J\right)\\
        L_3^{-1}+L_3^{-1}L_2^TH^{\dagger}L_2L_3^{-1}&=\frac{1}{n^2(n+2)}\left(n(n+1)I+\frac{n^3+2n^2-1}{n+1}J\right).
    \end{aligned}
\end{equation*}
From Lemma \ref{pinv},
\[L^{(1)}=\frac{1}{n^2(n+2)}\begin{bmatrix}
    (n+1)(nI-J)& nI-J\\
    nI-J &  n(n+1)I+\frac{n^3+2n^2-1}{n+1}J
\end{bmatrix}.\]

    By the formula of $r_{ij}$, the resistance distance matrix of $K_n\times K_2$ can be written as
    \[R(K_n\times K_2)= \begin{bmatrix}
        \frac{n+1}{n+2}R(K_n)& \frac{1}{n+2}(3J+R(K_n))\\&\\\frac{1}{n+2}(3J+R(K_n))&\frac{n+1}{n+2}R(K_n)
    \end{bmatrix}.\]
    From Lemma \ref{BS} and Proposition \ref{RJ}, we have 
$$Spec_{R}(K_n\times K_2)= \begin{pmatrix}
        \frac{5n^2+2n-4}{n(n+2)}&-\frac{2}{n}&-1&-\frac{2}{n+2}\\1&n-1&1&n-1
    \end{pmatrix}.$$
\end{proof}
\begin{corollary}
    $E_R(K_n\times K_2)= \frac{10n^2+4n-8}{n(n+2)}.$
\end{corollary}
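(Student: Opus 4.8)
The plan is to follow the block-matrix route already used for the double graph and the lexicographic product: first produce a $\{1\}$-inverse of the Laplacian via Lemma \ref{pinv}, convert it into the resistance distance matrix through the defining formula, and finally read off the spectrum using Lemma \ref{BS} together with Proposition \ref{RJ}. First I would record the Laplacian. Ordering the $2n$ vertices as two consecutive copies of $K_n$, every vertex of $K_n\times K_2$ has degree $n$, and the Laplacian splits as $\begin{bmatrix}L_1&L_2\\ L_2^T&L_3\end{bmatrix}$ with $L_1=L_3=(n+1)I-J$ and $L_2=-I$. Since the eigenvalues of $(n+1)I-J$ are $1$ and $n+1$, the block $L_3$ is invertible, with $L_3^{-1}=\tfrac{1}{n+1}(I+J)$, so Lemma \ref{pinv} applies.

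The central computation is to form $M=L_1-L_2L_3^{-1}L_2^T$. Using $J^2=nJ$ and the identity $(I+J)(nI-J)=nI-J$, one checks that $M=\tfrac{n+2}{n+1}(nI-J)$, and hence $M^\dagger=\tfrac{n+1}{n^2(n+2)}(nI-J)$, because $(nI-J)^\dagger=\tfrac{1}{n^2}(nI-J)$ and $(cA)^\dagger=c^{-1}A^\dagger$ for a nonzero scalar $c$. Substituting these into the four blocks of Lemma \ref{pinv}, and repeatedly collapsing products of $I$ and $J$, yields the displayed $L^{(1)}$; the only mildly delicate entry is the $(2,2)$ block $L_3^{-1}+L_3^{-1}L_2^TM^\dagger L_2L_3^{-1}$, which simplifies to $\tfrac{1}{n^2(n+2)}\bigl(n(n+1)I+\tfrac{n^3+2n^2-1}{n+1}J\bigr)$.

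Next I would pass from $L^{(1)}$ to $R(K_n\times K_2)$ through $r_{ij}=l^{(1)}_{ii}+l^{(1)}_{jj}-l^{(1)}_{ij}-l^{(1)}_{ji}$, which is valid for \emph{any} $\{1\}$-inverse, so the non-uniqueness of $L^{(1)}$ is harmless. There are exactly four entry types to evaluate: both vertices in the same copy (equal versus distinct positions) and the two vertices in different copies (same position versus distinct positions). Each case is a short arithmetic simplification over the common denominator $n^2(n+2)$, and the outcome is the block form $\begin{bmatrix}B_0&B_1\\ B_1&B_0\end{bmatrix}$ with $B_0=\tfrac{n+1}{n+2}R(K_n)$ and $B_1=\tfrac{1}{n+2}\bigl(3J+R(K_n)\bigr)$, where $R(K_n)=\tfrac{2}{n}(J-I)$.

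Finally, Lemma \ref{BS} reduces the spectrum of $R(K_n\times K_2)$ to those of $B_0+B_1$ and $B_0-B_1$, which simplify to $B_0+B_1=R(K_n)+\tfrac{3}{n+2}J$ and $B_0-B_1=-\tfrac{1}{n+2}(2I+J)$. Here Proposition \ref{RJ} is the clean conceptual step: resistance regularity of $K_n$ makes $J$ a polynomial in $R(K_n)$, so the two matrices are simultaneously diagonalizable and I may evaluate $B_0\pm B_1$ eigenspace by eigenspace. On the all-ones eigenvector ($R(K_n)$-eigenvalue $\tfrac{2(n-1)}{n}$, $J$-eigenvalue $n$) I get $\tfrac{5n^2+2n-4}{n(n+2)}$ from $B_0+B_1$ and $-1$ from $B_0-B_1$; on the $(n-1)$-dimensional orthogonal complement ($R(K_n)$-eigenvalue $-\tfrac{2}{n}$, $J$-eigenvalue $0$) I get $-\tfrac{2}{n}$ and $-\tfrac{2}{n+2}$ respectively. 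Collecting these gives the stated $Spec_R(K_n\times K_2)$. I expect the main obstacle to be bookkeeping rather than conceptual: carrying the $\{1\}$-inverse through Lemma \ref{pinv} without sign errors and correctly sorting the four entry types when assembling $R$.
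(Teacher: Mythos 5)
Your computation is correct as far as it goes, and it reproduces essentially step for step the paper's proof of the theorem immediately preceding this corollary: the block Laplacian with $L_1=L_3=(n+1)I-J$, $L_2=-I$, the $\{1\}$-inverse via Lemma \ref{pinv}, the block form of $R(K_n\times K_2)$, and the spectrum via Lemma \ref{BS} and Proposition \ref{RJ} all match the paper, and I can verify the intermediate identities ($M=\tfrac{n+2}{n+1}(nI-J)$, $B_0\pm B_1$, and the four resulting eigenvalues). The gap is that the statement you were asked to prove is the value of $E_R(K_n\times K_2)$, and your argument ends at $Spec_R(K_n\times K_2)$ without ever passing to the energy. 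The missing step is short but must be stated: since $R(G)$ has zero trace and exactly one positive eigenvalue (the paper records this in Section 4 as $E_R(G)=2\rho_1(G)$ for any connected $G$), one concludes
\[
E_R(K_n\times K_2)=2\rho_1(K_n\times K_2)=\frac{2(5n^2+2n-4)}{n(n+2)}=\frac{10n^2+4n-8}{n(n+2)},
\]
or, summing absolute values directly from the spectrum you obtained,
\[
\frac{5n^2+2n-4}{n(n+2)}+(n-1)\frac{2}{n}+1+(n-1)\frac{2}{n+2}=\frac{10n^2+4n-8}{n(n+2)}.
\]
With that line added the proof is complete. Note also that within the paper this corollary is meant to follow immediately from the preceding theorem, so you could have cited that theorem for the spectrum instead of re-deriving it; the re-derivation is not wrong, only redundant.
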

\begin{corollary}
    For any $n,$ $K_n\times K_2$ is $\left(\frac{5n^2+2n-4}{n(n+2)}\right)$-resistance regular.
\end{corollary}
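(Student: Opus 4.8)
The plan is to mirror the template already used in this section for $D_2G$ and $G[K_2]$: compute a $\{1\}$-inverse of the Laplacian, convert it into the resistance distance matrix via $r_{ij}=l^{(1)}_{ii}+l^{(1)}_{jj}-l^{(1)}_{ij}-l^{(1)}_{ji}$, recognise the result as block symmetric, and split its spectrum with Lemma \ref{BS}. First I would note that every vertex of $K_n\times K_2$ has degree $n$, so $L(K_n\times K_2)$ has the stated $2\times2$ block form with diagonal blocks $L_1=L_3=(n+1)I-J$ and off-diagonal block $L_2=-I$. Since $L_3$ is nonsingular, Lemma \ref{pinv} applies. The only inverse needed is $L_3^{-1}=\frac{1}{n+1}(I+J)$, which follows from the standard identity for matrices of the form $aI-bJ$; substituting into $M=L_1-L_2L_3^{-1}L_2^{T}$ gives $M=\frac{n+2}{n+1}(nI-J)$. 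The key simplification is that $nI-J=L(K_n)$, whose Moore--Penrose inverse is $(nI-J)^{\dagger}=\frac{1}{n^2}(nI-J)$ (it has eigenvalue $0$ on $\mathbf{1}$ and $n$ on $\mathbf{1}^{\perp}$), so $M^{\dagger}=\frac{n+1}{n^2(n+2)}(nI-J)$. Feeding these into the four blocks of Lemma \ref{pinv} yields the stated $L^{(1)}$; throughout, the products collapse because $(nI-J)(I+J)=(I+J)(nI-J)=nI-J$.

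Next I would apply the resistance formula block by block to $L^{(1)}$. The within-copy distances reproduce $\frac{n+1}{n+2}R(K_n)$, while the cross-copy distances give $\frac{3}{n+2}$ for corresponding vertices and $\frac{3n+2}{n(n+2)}$ otherwise; recalling $R(K_n)=\frac{2}{n}(J-I)$, these two cases are packaged uniformly as $\frac{1}{n+2}(3J+R(K_n))$. This produces the claimed block symmetric form $\begin{bmatrix}B_0&B_1\\B_1&B_0\end{bmatrix}$ with $B_0=\frac{n+1}{n+2}R(K_n)$ and $B_1=\frac{1}{n+2}(3J+R(K_n))$.

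Finally, Lemma \ref{BS} reduces the problem to the spectra of $B_0+B_1=R(K_n)+\frac{3}{n+2}J$ and $B_0-B_1=\frac{n}{n+2}R(K_n)-\frac{3}{n+2}J$. Because $K_n$ is resistance regular, Proposition \ref{RJ} guarantees that $J$ is a polynomial in $R(K_n)$, so both matrices are simultaneously diagonalised in the eigenbasis of $R(K_n)$: on $\mathbf{1}$ one has $R(K_n)$-eigenvalue $\frac{2(n-1)}{n}$ and $J$-eigenvalue $n$, while on $\mathbf{1}^{\perp}$ one has $-\frac{2}{n}$ and $0$. Substituting gives $\frac{5n^2+2n-4}{n(n+2)}$ together with $-\frac{2}{n}$ (multiplicity $n-1$) from $B_0+B_1$, and $-1$ together with $-\frac{2}{n+2}$ (multiplicity $n-1$) from $B_0-B_1$, which is exactly $Spec_{R}(K_n\times K_2)$.

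I expect the main obstacle to be the bookkeeping in the second step: assembling the bottom-right block of $L^{(1)}$ and then extracting the off-diagonal resistance entries correctly, since several $(nI-J)$ and $(I+J)$ products must be combined over the common denominator $n^2(n+1)(n+2)$ before the clean form $\frac{1}{n+2}(3J+R(K_n))$ emerges. Everything after that is a short spectral computation.
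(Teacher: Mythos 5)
Your proposal is correct and follows essentially the same route as the paper: the paper's proof of the preceding theorem uses exactly Lemma \ref{pinv} with $L_1=L_3=(n+1)I-J$, $L_2=-I$ to obtain $M=\frac{n+2}{n+1}(nI-J)$ and the same block form $B_0=\frac{n+1}{n+2}R(K_n)$, $B_1=\frac{1}{n+2}\bigl(3J+R(K_n)\bigr)$, then splits the spectrum via Lemma \ref{BS}. The corollary itself is then immediate from your computation, since the explicit blocks have constant row sums summing to $\frac{5n^2+2n-4}{n(n+2)}$ (equivalently, $\mathbf{1}$ is an eigenvector of $B_0+B_1$ with that eigenvalue), which is precisely the resistance-regularity claim.
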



Next theorem gives an upper bound for the Kirchoff index of a graph in terms of its number of vertices and $R$-eigenvalues.
\begin{theorem}
For a graph $G$ with $n$ vertices,    $$\mathcal{K}f(G)\leq \frac{\sqrt{n(n-1)\sum_{i=1}^{n}\rho_i(G)^2}}{2}.$$
\end{theorem}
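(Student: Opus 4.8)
The plan is to reduce the statement to a single application of the Cauchy--Schwarz inequality, exploiting the fact that $\mathcal{K}f(G)$ is a sum over the $\binom{n}{2}$ unordered pairs of vertices. First I would view $\mathcal{K}f(G)=\sum_{i<j}r_{ij}$ as the inner product of the all-ones vector (indexed by pairs) with the vector of off-diagonal resistance distances, so that Cauchy--Schwarz yields $\left(\sum_{i<j}r_{ij}\right)^2\leq \binom{n}{2}\sum_{i<j}r_{ij}^2$. The factor $\binom{n}{2}=\tfrac{n(n-1)}{2}$ is precisely what produces the $n(n-1)$ inside the square root of the desired bound.

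The second ingredient is to express $\sum_{i<j}r_{ij}^2$ through the resistance spectrum. Since $R(G)$ is symmetric with zero diagonal, $\sum_{i<j}r_{ij}^2=\tfrac12\sum_{i,j}r_{ij}^2=\tfrac12\,\mathrm{tr}\,R(G)^2$, and by the spectral theorem $\mathrm{tr}\,R(G)^2=\sum_{i=1}^n\rho_i(G)^2=S(G)$, which is exactly the quantity $S(G)$ already introduced in the paper. Substituting gives $\mathcal{K}f(G)^2\leq \binom{n}{2}\cdot\tfrac12 S(G)=\tfrac{n(n-1)}{4}\sum_{i=1}^n\rho_i(G)^2$, and taking the nonnegative square root yields the claimed inequality.

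There is no serious obstacle here; the only point requiring care is the bookkeeping of the factor $\tfrac12$ arising when passing between the sum over unordered pairs $i<j$ and the full double sum over $(i,j)$, which is legitimate precisely because the diagonal entries $r_{ii}$ vanish and $R(G)$ is symmetric. As an alternative route, one could combine Proposition \ref{kf}, which gives $2\mathcal{K}f(G)\leq n\rho_1(G)$, with the estimate $n\rho_1(G)^2\leq (n-1)S(G)$ obtained by applying Cauchy--Schwarz to $\rho_1(G)=-\sum_{i=2}^n\rho_i(G)$ together with $\sum_{i=1}^n\rho_i(G)=0$; this also delivers the bound and exposes the role of resistance regularity in the equality case. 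I would present the direct Cauchy--Schwarz argument as the main proof, since it is self-contained and shortest.
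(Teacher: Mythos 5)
Your proposal is correct and takes essentially the same route as the paper's own proof: a single Cauchy--Schwarz inequality over the $\binom{n}{2}$ unordered pairs, $\bigl(\sum_{i<j}r_{ij}\bigr)^2\leq \frac{n(n-1)}{2}\sum_{i<j}r_{ij}^2$, combined with the identity $\sum_{i=1}^{n}\rho_i(G)^2=\operatorname{trace}(R(G)^2)=2\sum_{i<j}r_{ij}^2$. The bookkeeping of the factor $\tfrac12$ is handled correctly, so nothing further is needed.
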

\begin{proof}
    We have 
    \begin{equation*}
        \begin{aligned}
            \sum_{i=1}^{n}\rho_i(G)^2&= \operatorname{trace}(R(G)^2)=\sum_{\substack{j=1\\i\neq j}}^{n}r_{ij}^2= 2\sum_{i<j}r_{ij}^2.
        \end{aligned}
    \end{equation*}
    Then  \begin{equation*}
    \begin{aligned}
        (\mathcal{K}f(G))^2= (\sum_{i<j}r_{ij})^2 &\leq \frac{n(n-1)}{2}\sum_{i<j}r_{ij}^2\\
        &= \frac{n(n-1)}{4}\sum_{i=1}^{n}\rho_i(G)^2.
        \end{aligned}
    \end{equation*}
    Therefore, $$\mathcal{K}f(G)\leq \frac{\sqrt{n(n-1)\sum_{i=1}^{n}\rho_i(G)^2}}{2}.$$
\end{proof}
\section{Conclusion}
  This manuscript provides a necessary and sufficient condition for a graph to be a resistance regular graph. Additionally, various bounds for the $R$-energy and $\rho_1(G)$ of $G$ are determined. Furthermore, the $R$-spectrum and $R$-energy of some resistance regular graphs are computed.
\section{Declarations}
 On behalf of all authors, the corresponding author states that there is no conflict of interest.
\bibliography{resreg}
 \bibliographystyle{plain}

\end{document}